\newtheorem{theorem}{Theorem}
\newtheorem{definition}{Definition}
\newtheorem{lemma}{Lemma}
\newtheorem{proposition}[theorem]{Proposition}
\newtheorem{remark}{Remark}
\let\e=\varepsilon
\let\d=\delta
\let\h=v
\let\p=\partial
\let\O=\Omega
\numberwithin{equation}{section}
\let\hide\iffalse
\let\unhide\fi
\newcommand{\R}{\mathbb{R}}
\renewcommand{\P}{\mathbf{P}}
\newcommand{\be}{\begin{equation}}
\newcommand{\bm}{\begin{multline}}
\newcommand{\ee}{\end{equation}}
\newcommand{\dd}{\mathrm{d}}
\newcommand{\xb}{x_{\mathbf{b}}}
\newcommand{\tb}{t_{\mathbf{b}}}
\newcommand{\Bes}{\begin{eqnarray*}}
\newcommand{\Ees}{\end{eqnarray*}}
\newcommand{\Be}{\begin{equation} }
\newcommand{\Ee}{\end{equation}}
\def\p{\partial}
\def\O{\Omega}
\def\R{\mathbb{R}}
\def\d{\mathrm{d}}
\def\B{\begin{equation}}
\def\E{\end{equation}}
\def\BN{\begin{eqnarray*}}
\def\EN{\end{eqnarray*}}
\begin{document}
\title[BGK equation with diffuse boundary]{BGK model for rarefied gas in a bounded domain}

\author{Hongxu Chen}
\address[HC]{Department of Mathematics, The Chinese University of Hong Kong, Shatin, Hong Kong}
\email{hchen463@cuhk.edu.hk}


\author{Christian Klingenberg}
\address[CK]{Department of Mathematics, University of Würzburg, Emil-Fischer-Str. 40, 97074 Würzburg, Germany}
\email{christian.klingenberg@uni-wuerzburg.de}

\author{Marlies Pirner}
\address[MP]{Department of Mathematics, University of Münster, Einsteinstr. 62, 48149 M\"unster, Germany}
\email{marlies.pirner@uni-muenster.de}

\date{\today}


\keywords{Boltzmann-BGK equation, diffuse reflection boundary, global in time solutions, large time behavior}

\maketitle

\textbf{Abstract:} We study the Bathnagar-Gross-Krook (BGK) equation in a smooth bounded domain featuring a diffusive reflection boundary condition with general collision frequency. We prove that the BGK equation admits a unique global solution with an exponential convergence rate if the initial condition is a small perturbation around the global Maxwellian in the $L^\infty$ space. {\color{black}For the proof, we utilize the dissipative nature from the linearized BGK operator and establish an $L^2$ coercive estimate. Next, we derive the a priori estimate by obtaining an $L^\infty$ bound on the nonlinear operator; this requires a delicate analysis to manage its intrinsic nonlinear structure. Finally, we establish the $L^\infty$ stability estimate and introduce sequential arguments for the nonlinear BGK operator, thereby concluding both well-posedness and positivity.}

\section{Introduction}
The dynamics of a monatomic gas without chemical reactions is known to be described by the celebrated Boltzmann equation. But the complicated structure of the collision operator has long been a major obstacle in developing efficient numerical methods \cite{Cercignani}. Under certain assumptions, the complicated interaction terms of the Boltzmann equation can be simplified by a so called BGK approximation, consisting of a collision frequency multiplied by the deviation of the distributions from local Maxwellians. This approximation is constructed in a way such that it has the same main properties of the Boltzmann equation namely conservation of mass, momentum and energy. In addition\textcolor{black}{,} it has an H-theorem with an entropy inequality leading to an equilibrium which is a Maxwellian. \textcolor{black}{Our interest in this kind of models comes from the fact that it is
 used a lot by engineers, chemists and physicists and in numerical applications, see for
example \cite{MonteferranteMelchionnaMarconi2014,PieracciniPuppo2007,Tantos}.  BGK  models give rise to eﬃcient numerical computations, which are asymptotic preserving, that is they remain eﬃcient even
approaching the hydrodynamic regime \cite{BennouneLemouMieussens2008,BernardIolloPuppo2015,CrestettoCrouseillesLemou2012,DimarcoPareschi2014,DimarcoMieussensRispoli2014,FilbetJin2010,PicoOrtizDosSantosPhilippi2007,chen2019numerical}.}

It is used in many applications and there exist many extensions to deal with gas mixtures, ellipsoid statistical (ES-BGK) models, polyatomic molecules, chemical reactions or quantum gases; see for example \cite{gross_krook1956,hamel1965,Greene,Garzo1989,Sofonea2001,Pirner,haack,Bobylev,AndriesAokiPerthame2002,Todorova,Groppi,Pirner5,Pirner6,Bisi,Bisi2,Pirner9}.

In this paper, we consider the initial-boundary value problem of the BGK equation in a smooth bounded domain $\Omega$ in $\R^3$:
\Be\label{1.1}
\p_t F + v\cdot \nabla_x F =  \nu (M(F)-F),
\Ee 
where $F=F(t,x,v)\geq 0$ stands for the velocity distribution function of gas particles with velocity $v=(v_1,v_2,v_3)\in \R^3$ at time $t\geq 0$ and position $x=(x_1,x_2,x_3)\in \Omega\subset \R^3$. $M(F)$ is the local Maxwellian defined as
\Be\label{local_maxwellian}
M(F)(t,x,v) = \frac{\rho(t,x)}{(2\pi T(t,x))^{3/2}} \exp \Big( -\frac{|v-U(t,x)|^2}{2T(t,x)}\Big),
\Ee
where $\rho,U$ and $T$ correspond to the macroscopic quantities given by the moments of $F$:
\begin{align*}
    &   \rho(t,x) = \int_{\mathbb{R}^3} F(t,x,v) \dd v, \\
    & \rho(t,x)U(t,x) = \int_{\mathbb{R}^3} F(t,x,v) v \dd v,\\
    & 3\rho(t,x) T(t,x) = \int_{\mathbb{R}^3} F(t,x,v) |v-U(t,x)|^2 \dd v.
\end{align*}

The collision frequency $\nu$ takes the following form: for some constants $\eta,\omega$:
\begin{align*}
    & \nu(x):= \rho^\eta T^\omega.
\end{align*}

From the numerical point of view, the BGK model considerably simplifies the situation. But mathematical analysis is not necessarily easier, because the relaxation operator involves more non-linearity compared to the bilinear collision operator of the Boltzmann equation. 
In \cite{Perthame1989}, Perthame established the global existence of weak solutions in whole space for the BGK model with constant collision frequency. Regularity and uniqueness were considered in \cite{Perthame} under the local existence framework in the torus. In a near-a-global-Maxwellian regime, the global existence in the whole space in $\mathbb{R}^3$ and a polynomial convergence to equilibrium was established in \cite{Bellouquid2003}. In \cite{Yun}, for a wide class of non-trivial collision frequencies, the existence of a unique global smooth solution is established in the torus under a close-to-equilibrium assumption on the initial data and an exponential decay estimate is established in a high order energy norm. There are also various extensions of the previous result to more complicated BGK-type equations as the BGK equation for gas mixtures in \cite{BGKmixtures}, the ellipsoidal BGK model \cite{ESBGK}, relativistic and quantum BGK models \cite{BGKrel,BGKquantum}. Moreover, a method to construct sharp convergence rates for the  BGK equation is given in \cite{10.1007/978-3-319-32144-8_1,sharp}. All results here, concerning exponential convergence to equilibrium are in the torus and use a high order energy method to show exponential convergence for the non-linear BGK equation in a close-to-equilibrium regime. {\color{black}On the other hand, there are very few studies on the boundary value problem of the BGK problem.

Reflective boundary conditions play a role in many applications. Therefore, several numerical methods for the BGK equation with reflective boundary conditions have been proposed in the literature, e.g. \cite{Groppi2016Boundary,RussoFilbet2009MovingBoundaryBGK,BernardIolloPuppo2015}, also focusing on approaches preserving at the discrete level the asymptotic limit towards Euler equations up to the wall, thus ensuring a smooth transition towards the hydrodynamic regime \cite{BernardIolloPuppo2015}.
Therefore, in this article, we aim to provide a theoretical foundation for the boundary value problem and construct a unique global solution to the BGK equation with the diffusive reflection boundary condition. We note that the diffusive boundary condition is one of the most important reflection-type boundary conditions, and it corresponds to the no-slip boundary condition in the hydrodynamic limit, cf. \cite{saint2009hydrodynamic}. 

\color{black}

}

In the presence of the boundary, due to the characteristic nature, the kinetic equation exhibits singularities near the boundary \cite{GKTT,GKTT2,K,CK,CK_2023,chen2019regularity}, the high order energy method and Fourier transform method(cf. \cite{duan2021global}) become unavailable. To address the challenges posed by the nonlinear BGK operator, in the paper we focus on constructing a low-regularity solution, specifically achieving $L^\infty_{x,v}$ control without relying on the embedding $H^2_x\subset L^\infty_x$. The linear BGK operator possesses a dissipative property for the microscopic components in the $L^2_{v}$ energy estimate, which allows us to manage the additional nonlinearity introduced by the BGK operator by seeking a solution in the space $L^2_{x,v}\cap L^\infty_{x,v}$. Guo proposed this $L^2_{x,v}\cap L^\infty_{x,v}$ framework in \cite{G}, which established global well-posedness and exponential convergence to the global Maxwellian for the Boltzmann equation including diffuse and specular boundary condition. This breakthrough has significantly advanced the study of the boundary value problem of the Boltzmann equation, we refer to \cite{KL} for the specular boundary and \cite{briant,CKQ,bernou2022hypocoercivity,chen2024boltzmann} for intermediate status between pure diffuse reflection and pure specular reflection. {\color{black}Our main purpose in this paper is to propose an effective method to construct the BGK solution in the low regularity space $L^2_{x,v} \cap L^\infty_{x,v}$. Thus we only focus on the classical diffuse reflection boundary condition as mentioned earlier. We expect that our methodology can be applied to investigate the relevant problems, such as the well-posedness theory under other boundary conditions, the stationary problem, the regularity issues, the hydrodynamic limits, etc.}

To the end, we denote the boundary of the phase space as
\begin{equation*}
\gamma:= \{(x,v)\in \p\O \times \mathbb{R}^3\}.
\end{equation*}
Let $n=n(x)$ be the outward normal direction at $x\in\p\O$. We decompose $\gamma$ as
\begin{equation*}
\begin{split}
    &\gamma_- = \{(x,v)\in \p\O\times \mathbb{R}^3 : n(x)\cdot v < 0\},  \\
    & \gamma_+ = \{(x,v)\in \p\O\times \mathbb{R}^3 : n(x)\cdot v > 0\}, \\
    &\gamma_0 = \{(x,v)\in \p\O\times \mathbb{R}^3 : n(x)\cdot v = 0\}.
\end{split}    
\end{equation*}
The diffusive reflection boundary condition is prescribed for the incoming phase space: 
\begin{align*}
  F(t,x,v)|_{\gamma_-}  &  = c_\mu\mu(v) \int_{n(x)\cdot u>0} F(t,x,u) (n(x)\cdot u) \dd u,  
\end{align*}
where $\mu$ corresponds to the normalized global Maxwellian:
\begin{align*}
\mu(v):= \frac{1}{(2\pi)^{3/2}} e^{-\frac{|v|^2}{2}}.
\end{align*}
The constant $c_\mu=\sqrt{2\pi}$ is chosen to satisfy $\int_{n(x)\cdot v<0 }c_\mu\mu(v) |n(x)\cdot v|dv=1$ so that $c_\mu\mu(v) |n(x)\cdot v|$ is a probability measure on the half velocity space $\{\R^3: n(x)\cdot v< 0\}$. Note that the mass flux is vanishing at the boundaries, namely
\begin{equation*}
\int_{\R^3} (n(x)\cdot v)F(t,x, v)\d v=0 , \ x\in \p\O.
\end{equation*}

We seek the solution around the global Maxwellian, which takes the form $F = \mu + \sqrt{\mu}f$. Then, the following equation for $f$ can be derived 
\begin{equation}
\label{f_eqn}
\left\{\begin{aligned}
  &\p_t f + v\cdot \nabla_x f +  \mathcal{L} f  =  \Gamma(f) \quad \text{in }(0,\infty)\times \Omega\times \R^3,\\
   &            f(t,x,v)|_{\gamma_-}       = c_\mu\sqrt{\mu(v)} \int_{n(x)\cdot u>0} f(t,x,u)\sqrt{\mu(u)} |n(x)\cdot u| \dd u \quad \text{for }x\in \p\O, \\
   & f(0,x,v) = f_0(x,v).
\end{aligned}\right.
\end{equation}
Here,  $\mathcal{L}$ is a linearized collision operator, and $\Gamma$ a nonlinear collision operator. 
To define these operators, we first denote the orthonormal basis 
\begin{align*}
    & \chi_0(v): = \sqrt{\mu(v)}, \ \ \chi_i(v): = v_i \sqrt{\mu(v)}, \ i=1,2,3, \ \ \chi_4(v): = \frac{|v|^2-3}{2}\sqrt{\mu(v)}.
\end{align*}
We denote $\mathbf{P}f$ as the macroscopic quantities, which is defined as the $L^2_v$ projection of $f$ onto the subspace spanned by $\chi_i$:
\begin{align*}
   \mathbf{P}f & := \sum_{i=0}^4 \langle f,\chi_i\rangle \chi_i =  a(x)\chi_0 + \sum_{i=1}^3 b_i(x)\chi_i + c(x) \chi_4,
\end{align*}
with
\begin{equation*}
   a(t,x)   : = \langle f,\chi_0\rangle ,  \ \ \mathbf{b}(t,x)=(b_1(t,x),b_2(t,x),b_3(t,x)), \ 
   b_i(t,x)   := \langle f,\chi_i\rangle
   \ \text{ for }
   i=1,2,3 ; \ \
   c(t,x)   := \langle f,\chi_4 \rangle, 
\end{equation*}
where we have taken the usual inner product on $L^2(\R^3_v)$: 
\begin{equation*}
 \langle f,g\rangle     = \int_{\mathbb{R}^3} f(v)g(v)\,\dd v.
\end{equation*}

The linear operator $\mathcal{L}$ is then defined as $\mathcal{L}f = (\mathbf{I}-\mathbf
P)f$. The nonlinear operator $\Gamma(f)$ is defined as the remaining term in the BGK operator \eqref{1.1}:
\begin{align}
    &    \Gamma(f):= \frac{\nu (M(\mu+\sqrt{\mu}f)-\mu-\sqrt{\mu}f)}{\sqrt{\mu}}-(\mathbf{I}-\mathbf{P})f. \label{Gamma_def}
\end{align}
Here we highlight that $\Gamma(f)$ is a nonlinear operator of $f$, which exhibits a higher degree of nonlinearity compared to the bilinear Boltzmann operator. The derivation of $\mathcal{L}f$ and the explicit expression of $\Gamma(f)$ can be obtained by performing a Taylor expansion around the equilibrium state $(\rho,u,T) = (1,0,1)$.
For the detailed derivation and the associated properties, we refer to the next section (\eqref{linear_op} and \eqref{nonlinear_op} in Lemma \ref{lemma:operator_property}).

{\color{black}
We denote a velocity weight as 
\begin{equation}\label{weight}
w(v):= (1+|v|)^{\beta}e^{\theta |v|^2}, \ \ \begin{cases}
    & \beta\geq 0 \text{ for } 0<\theta < \frac{1}{4},\\
    & \beta > \frac{3}{2} \text{ for } \theta=0.
\end{cases}    
\end{equation}
Such choice of weight guarantee $w^{-2}(v)\in L^1_v$ and $w(v)\sqrt{\mu}\lesssim 1$.
}

Now we state our main result.

\begin{theorem}\label{thm:linfty}
Assume $\O$ is bounded and smooth. There exists a constant $0<\delta\ll 1$ such that \\
if the initial condition $F_0(x,v)=\mu+\sqrt{\mu}f_0(x,v)\geq 0$ satisfies $\int_{\Omega}\int_{\R^3}\sqrt{\mu}f_0(x,v)\,\dd v\dd x=0$ and
\begin{equation*}
\Vert wf_0\Vert_{L^\infty_{x,v}} < \delta,
\end{equation*}
then there exists a unique solution $F(t,x,v)=\mu+\sqrt{\mu}f(t,x,v)\geq 0$ to the problem \eqref{f_eqn} \\
such that $\int_{\Omega}\int_{\R^3}\sqrt{\mu}f(t,x,v)\,\dd v\dd x\equiv 0$, and the following estimate holds true:
\begin{equation*}
\Vert wf(t)\Vert_{L^\infty_{x,v}} \leq Ce^{-\lambda t}\delta.
\end{equation*}
Here $C>1$, $0<\lambda < 1$ are constants.

\end{theorem}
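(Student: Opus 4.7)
The plan is to implement a Guo-type $L^2\cap L^\infty$ bootstrap together with a contraction argument. Set $f^0\equiv 0$ and for $n\geq 0$ define $f^{n+1}$ as the solution of the inhomogeneous linearized problem
\begin{equation*}
\partial_t f^{n+1}+v\cdot\nabla_x f^{n+1}+\mathcal{L}f^{n+1}=\Gamma(f^n),
\end{equation*}
subject to the diffuse boundary condition and initial data $f_0$. The goal is to show this map is a contraction on the weighted ball $\{f:\sup_{t\geq 0} e^{\lambda t}\|wf(t)\|_{L^\infty_{x,v}}\leq 2C\delta\}$ for $\delta$ small, after which existence, uniqueness and exponential decay all follow by passing to the limit. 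The zero-mass condition $\int_\Omega\int_{\R^3}\sqrt{\mu}\,f_0\,\dd v\,\dd x=0$ is propagated by each iterate because the diffuse boundary condition has vanishing normal flux, and it will be crucial for handling the null space of $\mathcal{L}=\mathbf{I}-\mathbf{P}$.

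The central linear input is a two-level decay estimate for $\partial_t f+v\cdot\nabla_x f+\mathcal{L}f=g$. At the $L^2$ level, the energy identity produces the microscopic dissipation $\|(\mathbf{I}-\mathbf{P})f\|_{L^2_{x,v}}^2$ together with a boundary dissipation generated by the diffuse reflection. To close exponential $L^2$ decay I would control the hydrodynamic projection $\mathbf{P}f=a\chi_0+\mathbf{b}\cdot\chi+c\chi_4$ by the microscopic and boundary dissipation via the standard macroscopic estimate: testing the equation against carefully chosen solutions of auxiliary elliptic problems on $\Omega$ driven by $a,\mathbf{b},c$, and using the zero-mass assumption to apply Poincar\'e to $a$. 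At the $L^\infty$ level one writes the mild formulation with weight $w=e^{\theta|v|^2}$ along backward characteristics and then iterates Duhamel once more so that the kernel involves a double velocity integral. After a non-degenerate change of variables in the inner velocity argument, isolating the small-measure grazing set $|n(x)\cdot v|\ll 1$ and carefully tracking the diffuse-bounce contributions, the double integral is bounded by the $L^2_{x,v}$ norm of the solution times a small factor; this is the mechanism that converts $L^2$ decay into $L^\infty$ decay.

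The third ingredient is a pointwise nonlinear estimate on $\Gamma(f)$. A Taylor expansion of the local Maxwellian $M(\mu+\sqrt{\mu}f)$ and of the collision frequency $\nu=\rho^\eta T^\omega$ around $(\rho,U,T)=(1,0,1)$ shows that $\Gamma(f)$ is at least quadratic in $f$, and I expect bounds of the form $w|\Gamma(f)|\lesssim \|wf\|_\infty^2$ and $w|\Gamma(f)-\Gamma(g)|\lesssim (\|wf\|_\infty+\|wg\|_\infty)\|w(f-g)\|_\infty$, valid whenever $\|wf\|_\infty,\|wg\|_\infty$ are small so that $\rho$ stays uniformly positive and $T$ uniformly bounded away from zero. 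The hard part will be precisely this strong nonlinearity: unlike the bilinear Boltzmann operator, $\Gamma(f)$ depends nonlinearly on $f$ through the exponent of a Maxwellian, so one must both preserve positivity of $\rho$ and $T$ along the iteration and carry enough velocity weight (the choice $\theta<1/4$ is what makes this possible) to defeat the ratio $M(\mu+\sqrt{\mu}f)/\sqrt{\mu}$ at large $|v|$. Once these estimates are in hand, plugging them into the two-level linear decay produces an inequality of the schematic type
\begin{equation*}
\sup_{t\geq 0} e^{\lambda t}\|wf^{n+1}(t)\|_\infty \leq C\delta + C\Bigl(\sup_{t\geq 0} e^{\lambda t}\|wf^n(t)\|_\infty\Bigr)^2
\end{equation*}
and an analogous bound for differences, yielding the contraction and the stated estimate $\|wf(t)\|_\infty\leq 2Ce^{-\lambda t}\delta$. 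Finally, positivity of $F=\mu+\sqrt{\mu}f$ is recovered by running the original formulation $\partial_t F+v\cdot\nabla_x F+\nu F=\nu M(F)$ along characteristics, using $F_0\geq 0$ and the a priori boundedness of $\nu$ and $\nu M(F)$ obtained from the $L^\infty$ control of $f$.
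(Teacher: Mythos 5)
Your proposal follows essentially the same route as the paper for the main existence and decay argument: the identical iteration $f^0=0$, $f^{\ell+1}$ solving the linearized problem with source $\Gamma(f^\ell)$; the two-level $L^2$-then-$L^\infty$ Guo-type framework (macroscopic dissipation via elliptic test functions and Poincar\'e for $a$, exploiting $\int_\Omega\int a=0$; double Duhamel along characteristics with a non-degenerate velocity change of variable, isolation of the small grazing set, and control of multiple diffuse bounces); the Taylor-expansion nonlinear estimates $w|\Gamma(f)|\lesssim \|wf\|_\infty^2$ and a Lipschitz stability estimate in $w$-weighted $L^\infty$; and the final contraction/induction that yields $\|wf(t)\|_\infty\leq 2Ce^{-\lambda t}\delta$. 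This is precisely the architecture of Propositions 3.1 and 3.3, Lemmas 2.4 and 2.6, and the proof of Theorem 1.1 in the paper.

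Where you diverge is the positivity argument, and as written it has a gap. You propose to read $\partial_t F+v\cdot\nabla_x F+\nu F=\nu M(F)$ along characteristics for the already-constructed solution, using $F_0\geq 0$ and $\nu M(F)\geq 0$. This works only in the absence of boundary: with diffuse reflection, the trace $F|_{\gamma_-}$ is a positive integral of $F$ over $\gamma_+$ at the same time, so the very quantity whose sign you are trying to determine appears on both sides, and you cannot simply declare the boundary datum nonnegative. To close this you would need to iterate through the full stochastic-cycle representation and invoke Lemma~\ref{lemma:tk} to make the remainder after $k$ bounces small in sup norm before passing $k\to\infty$; your one-line sketch does not do this. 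The paper sidesteps the issue entirely by setting up a separate, manifestly positivity-preserving approximating sequence
\begin{equation*}
\partial_t F^{\ell+1}+v\cdot\nabla_x F^{\ell+1}+\nu^{\ell}F^{\ell+1}=\nu^{\ell}M(F^{\ell}),\qquad F^{\ell+1}|_{\gamma_-}=c_\mu\mu\int_{n\cdot u>0}F^{\ell}(n\cdot u)\,\dd u,
\end{equation*}
in which the boundary datum and source are given nonnegative functions of the previous iterate, so each $F^{\ell+1}\geq 0$ by the elementary transport maximum principle. It then shows (via the already-proved stability estimates) that this sequence is Cauchy and, by uniqueness, converges to the same $f$. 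Either strategy can work, but your sketch as written leaves the self-referential boundary term unaddressed; if you prefer the direct characteristic argument, you must include the bounce-expansion step explicitly.
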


{\color{black}
\begin{remark}
The linearized BGK operator $\mathcal{L}f = (\mathbf{I}-\mathbf{P})f$ corresponds to the microscopic component of $f$. Here, the first component $f$ serves as a damping factor, while the second component $\mathbf{P}f$ is a compact operator on $L^2_v$.

For the Boltzmann operator $Q(F,F)$, the linearized operator is given by $\mathcal{L}_Q(f) :=- \frac{Q(\mu,\sqrt{\mu}f)+Q(\sqrt{\mu}f,\mu)}{\sqrt{\mu}}$. By the Grad estimate in \cite{R}, this operator can be decomposed into
\begin{align*}
    &  \mathcal{L}_Q f = \nu(v) f - Kf, \ \   Kf = \int_{\mathbb{R}^3} \mathbf{k}(v,u)f(u) \dd u, \ \ \mathbf{k}(v,u) \lesssim \frac{e^{-C|v-u|^2}}{|v-u|}.
\end{align*}

Compared with the linear BGK operator, the damping factor is given by $\nu(v)\sim (1+|v|)^\gamma$. In the case of hard sphere $\gamma = 1$, it provides extra damping. In the case of Maxwell molecule $\gamma = 0$, this coincides with the damping factor in the BGK operator. 

The integral operator $Kf$ is also a compact operator on $L^2_v$. Under a polynomial or exponential weight $w$ from \eqref{weight}, the kernel $\mathbf{k}(v,u)$ enjoys(hard sphere potential)
\begin{align*}
    &   \mathbf{k}(v,u)\frac{w(v)}{w(u)} \lesssim \frac{e^{-C|v-u|^2}}{|v-u|},  \\
    &w(v)Kf =w(v) \int_{\mathbb{R}^3} \mathbf{k}(v,u) f(u) \dd u \lesssim \Vert wf\Vert_{L^\infty_{x,v}} \int_{\mathbb{R}^3} \frac{e^{-C|v-u|^2}}{|v-u|}\dd u \lesssim \Vert wf\Vert_{L^\infty_{x,v}}.
\end{align*}

The linearized BGK operator exhibits a similar but more regular property. Since $\mathbf{P}$ is an $L^2_v$-projection onto its kernel, and given the constraint $\theta<\frac{1}{4}$ in \eqref{weight}, we have:
\begin{align*}
    &   w(v) \mathbf{P}f \lesssim w(v)\sum_{i=0}^4 \chi_i(v)\int_{\mathbb{R}^3} \chi_i(u)f(u) \dd u \lesssim \Vert wf\Vert_{L^\infty_{x,v}} \sum_{i=0}^4 \int_{\mathbb{R}^3} \chi_i(u)w^{-1}(u)\lesssim \Vert wf\Vert_{L^\infty_{x,v}}.
\end{align*}

The integral kernel of the linearized BGK operator does not exhibit a singularity in $|v-u|$. We expect this smoother structure to enhance the regularity of solutions to the BGK equation in boundary value problems, compared with the regularity of the Boltzmann equation studied in \cite{GKTT}. This will be left for future study.

\end{remark}

\begin{remark}\label{remark:nonlinear}
While the linearized BGK operator has a simpler structure than its Boltzmann counterpart, its nonlinear term $\Gamma(f)$ is more complex. The nonlinear Boltzmann operator $\Gamma_Q(f,f) = \frac{Q(\sqrt{\mu}f,\sqrt{\mu}f)}{\sqrt{\mu}}$ has a bilinear form. This structure yields the key weighted estimate: $\Vert \nu^{-1}w\Gamma_Q(f,f)\Vert_{L^\infty_{x,v}}\lesssim \Vert wf\Vert_{L^\infty_{x,v}}^2$ in the $L^2_{x,v}-L^\infty_{x,v}$ argument. Furthermore, the sequential argument and uniqueness proof follow directly, as the bilinearity implies:
\begin{align*}
\Vert \nu^{-1}w[\Gamma_Q(f_1-f_2,f_1)+\Gamma(f_2,f_1-f_2)]\Vert_{L^\infty_{x,v}} \lesssim \Vert w(f_1-f_2)\Vert_{L^\infty_{x,v}}[\Vert wf_1\Vert_{L^\infty_{x,v}} + \Vert wf_2\Vert_{L^\infty_{x,v}}].
\end{align*}

In contrast, the nonlinear BGK operator $\Gamma(f)$ is defined in \eqref{nonlinear_op} via a Taylor expansion, and exhibits a more intricate nonlinearity. Although the last term in \eqref{nonlinear_op} appears to be trilinear, the coefficients $\mathcal{Q}_{ij}$ in \eqref{Q_ij} depend nonlinearly on the macroscopic quantities $(\rho, U, T)$ of $F$, which again depend on the perturbed solution $f$. This nonlinear dependency poses a major challenge for deriving the necessary $L^\infty_{x,v}$ estimate for $\Vert w\Gamma(f)\Vert_{L^\infty_{x,v}}$. We refer to Lemma \ref{lemma:gamma_property} for details.

Moreover, new sequential equations and stability estimate $\Vert w\Gamma(f_1)-\Gamma(f_2)\Vert_{L^\infty_{x,v}}$ need to be introduced to establish the existence, uniqueness, and positivity of the solution. We refer to the stability estimate in Section \ref{sec:gamma_stability} and the new sequential argument in Section \ref{sec:thm_proof}.

Moreover, to establish the solution's existence, uniqueness, and positivity, we must introduce a new iterative scheme and a corresponding stability estimate of the nonlinear operator$\Vert w\Gamma(f_1)-\Gamma(f_2)\Vert_{L^\infty_{x,v}}$. We refer to the stability estimate in Section \ref{sec:gamma_stability} and the sequential argument in Section \ref{sec:thm_proof}.

\end{remark}

As explained in Remark \ref{remark:nonlinear}, our main contribution in this paper can be summarized as follows:
\begin{enumerate}
    \item \textbf{Nonlinear estimates:} We derive the $L^\infty_{x,v}$ estimate for the nonlinear BGK operator $\Gamma(f)$ and establish the a priori estimate in the $L^2$-$L^\infty$ framework, 

    \item \textbf{Well-posedness:} We derive the $L^\infty_{x,v}$ stability estimate for $\Gamma(f)$ and establish a new iterative scheme to prove the existence and uniqueness of the solution.

    \item \textbf{Positivity:} We establish the positivity of the solution through a new sequential argument.
\end{enumerate}

}

\textbf{Outline.} In Section \ref{sec:prelim}, we will derive the expressions of the BGK operator $\mathcal{L}$ and $\Gamma$, and establish their fundamental properties. In Section \ref{sec:l2}, we will derive the $L^2$ estimate for the linear BGK equation by leveraging the coercive property of $\mathcal{L}$. Finally, in Section \ref{sec:linfty}, we conclude Theorem \ref{thm:linfty} by constructing the $L^\infty$ estimate through the method of characteristics and employing an iterative argument for the existence and uniqueness of the solution.

\ \\

\section{Preliminaries}\label{sec:prelim}

\subsection{Derivation of $\mathcal{L}$ and $\Gamma$}

In this section, we derive the explicit expressions of $\mathcal{L}$ and $\Gamma$. 
\begin{lemma}[\cite{Yun}]\label{lemma:operator_property}
(i) The collision frequency $\nu = \rho^\eta T^\omega$ in \eqref{1.1} can be linearized around the global equilibrium state $(\rho,T)=(1,1)$ as
\begin{align}
    & \nu = 1 + \nu_p , \notag\\
    &  \ \nu_p = \sum_i \langle f,\chi_i\rangle \int_0^1 D_{(\rho_\vartheta, \rho_\vartheta U_\vartheta, G_\vartheta)} (\rho_\vartheta^\eta T^\omega_\vartheta)  \dd \vartheta = \sum_{i}\langle f,\chi_i\rangle \int_0^1 Q_i  \dd \vartheta   , \label{nu_p}
\end{align}
where the notation in \eqref{nu_p} is defined as
\begin{align}
    &   Q_i:=  \{ D_{(\rho_\vartheta,\rho_\vartheta U_\vartheta, G_\vartheta)}(\rho_\vartheta^\eta T_\vartheta^\omega)\}_i, \label{Q_i_def} \\
    & \rho_\vartheta = \vartheta \rho + (1-\vartheta) 1 ,\ \rho_\vartheta U_\vartheta = \vartheta \rho U , \notag \\
    & \frac{\rho_\vartheta |U_\vartheta|^2 + 3\rho_\vartheta T_\vartheta}{2} - \frac{3}{2}\rho_\vartheta = \vartheta \Big\{\frac{\rho |U|^2 + 3\rho T}{2} - \frac{3}{2}\rho \Big\}, \notag\\
    & G = \frac{\rho|U|^2+3\rho T}{\sqrt{6}} - \frac{3\rho}{\sqrt{6}}, \ G_\vartheta = \vartheta G.  \label{vartheta_def}
\end{align}

(ii) The local Maxwellian $M(F)$ in \eqref{1.1} can be linearized around $\mu$ as
\begin{align}
    &   M(F) = \mu + \mathbf{P}f\sqrt{\mu} + \sum_{0\leq i,j\leq 4}\langle f,\chi_i\rangle \langle f,\chi_j\rangle \int_0^1 \mathcal{Q}_{ij}  (1-\vartheta) \dd \vartheta  .  \label{M_expansion}
\end{align}
Here $\mathcal{Q}_{ij}$ is defined as
\begin{align}
    &   \mathcal{Q}_{ij} = \{D^2_{(\rho_\vartheta,\rho_\vartheta U_\vartheta,G_\vartheta)}M(\vartheta)\}_{ij}, \label{Q_ij}
\end{align}
\begin{align}
    &   M(\vartheta) = \frac{\rho_\vartheta}{(2\pi T_\vartheta)^{3/2}} e^{-\frac{|v-U_\vartheta|^2}{2T_\vartheta}}.\notag
\end{align}

(iii) Plugging the perturbation $F=\mu+\sqrt{\mu} f$ and the expansion of $M(F),\nu$ given by \eqref{M_expansion}, \eqref{nu_p} into the equation \eqref{1.1}, we derive the expression of $\mathcal{L}(f)$ and $\Gamma(f)$ as
\begin{align}
    &    \mathcal{L}f = (\mathbf{I}-\mathbf{P})f ,  \label{linear_op}  \\
    &    \Gamma(f) = \nu_p \mathbf{P}f -  \nu_p f + \sum_{0\leq i,j \leq 4} \int_0^1 \mathcal{Q}_{ij}(1-\vartheta) \dd \vartheta \langle f,\chi_i\rangle \langle f,\chi_j\rangle \mu^{-1/2} \notag \\
    &+ \nu_p \sum_{0\leq i,j\leq 4} \int_0^1 \mathcal{Q}_{ij}(1-\vartheta) \dd \vartheta\langle f,\chi_i\rangle \langle f,\chi_j\rangle \mu^{-1/2} \notag \\
    & := \Gamma_1(f) + \Gamma_2(f) + \Gamma_3(f) + \Gamma_4(f). \label{nonlinear_op}
\end{align}

\end{lemma}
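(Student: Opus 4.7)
The proof is a direct application of Taylor's theorem with integral remainder, carried out in the conserved-variable coordinates $z=(\rho,\rho U,G)$ whose equilibrium value is $z_*=(1,0,0)$. My plan is to handle parts (i) and (ii) as two independent Taylor expansions along the affine segment $\vartheta\mapsto z_\vartheta := (1-\vartheta)z_* + \vartheta z$ prescribed by \eqref{vartheta_def}, then assemble part (iii) by a direct algebraic substitution into \eqref{1.1}.

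For (i), I would apply the fundamental theorem of calculus,
\begin{equation*}
\nu - 1 = \int_0^1 \frac{d}{d\vartheta}\bigl(\rho_\vartheta^\eta T_\vartheta^\omega\bigr)\,\d\vartheta = \sum_{i=0}^{4} (z_i - z_{*,i}) \int_0^1 Q_i\,\d\vartheta,
\end{equation*}
with $Q_i$ as in \eqref{Q_i_def}. The one step that is not automatic is the identification $z_i - z_{*,i}=\langle f,\chi_i\rangle$. For $i=0,1,2,3$ this is immediate from the definitions of $\rho$, $\rho U_i$ and $\chi_i$; for $i=4$ I would use $\rho|U|^2+3\rho T = \int|v|^2 F\,\d v$ together with $\int|v|^2\mu\,\d v = 3$ to recognize $G$ as $\langle f,\chi_4\rangle$.

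For (ii), I would apply the second-order Taylor identity $g(1) = g(0) + g'(0) + \int_0^1 (1-\vartheta)g''(\vartheta)\,\d\vartheta$ to $g(\vartheta):= M(\vartheta)$. The zeroth-order term gives $\mu$, and the integral remainder produces the quadratic double sum in \eqref{M_expansion} once the moment identification of (i) is reused. The substantive substep is the linear term: using the chain rule together with the facts that at equilibrium $\partial T/\partial\rho|_* = \partial T/\partial(\rho U_i)|_* = 0$ and $\partial U/\partial\rho|_* = 0$, I would verify
\begin{equation*}
\partial_\rho M|_*=\mu,\qquad \partial_{(\rho U_i)}M|_*=v_i\mu,\qquad \partial_G M|_*=\tfrac{|v|^2-3}{\sqrt 6}\,\mu,
\end{equation*}
i.e.\ $\partial_{z_i}M|_* = \sqrt{\mu}\,\chi_i$ for every $i$. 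Summing against $\langle f,\chi_i\rangle$ then reproduces $\sqrt{\mu}\,\mathbf{P}f$ as required.

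Part (iii) is purely algebra: substituting $F=\mu+\sqrt\mu f$, $\nu=1+\nu_p$, and \eqref{M_expansion} into \eqref{1.1} yields $M(F)-F = -\sqrt{\mu}(\mathbf{I}-\mathbf{P})f + \mathcal{R}$, where $\mathcal{R}$ denotes the quadratic remainder from (ii). Multiplying by $(1+\nu_p)/\sqrt{\mu}$ and moving $-(\mathbf{I}-\mathbf{P})f$ onto the left-hand side leaves the four residual terms $\nu_p\mathbf{P}f$, $-\nu_p f$, $\mathcal{R}/\sqrt{\mu}$, and $\nu_p\mathcal{R}/\sqrt{\mu}$, which match $\Gamma_1,\Gamma_2,\Gamma_3,\Gamma_4$ in \eqref{nonlinear_op} line by line. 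The only real obstacle in the whole argument is the coordinate bookkeeping in (ii) --- verifying that the derivative basis $\{\partial_{z_i}M|_*\}$ coincides with $\sqrt{\mu}\{\chi_i\}$, which is the single place where the precise normalization of $G$ in \eqref{vartheta_def} and the $\sqrt{6}$ factor in $\chi_4$ are essential. Once that identification is secured, every remaining step reduces to substitution.
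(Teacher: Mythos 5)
Your proposal is correct, and it is the standard derivation: the paper itself gives no proof of this lemma (it is quoted from \cite{Yun}), but the first- and second-derivative computations of $\rho^\eta T^\omega$ and $M$ in the conserved variables $(\rho,\rho U,G)$ that your Taylor expansion requires are exactly the ones the paper carries out explicitly in the proof of Lemma \ref{lemma:Q_property}, and your two key identifications --- $z_i-z_{*,i}=\langle f,\chi_i\rangle$ and $\partial_{z_i}M|_*=\chi_i\sqrt{\mu}$ (where the near-identity form of the Jacobian $(D_{(\rho,\rho U,G)}(\rho,U,T))^{-1}$ at $(\rho,U,T)=(1,0,1)$ does the work) --- both check out. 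Your algebra in part (iii) is also the one consistent with \eqref{nonlinear_op}; note it implicitly corrects a sign in \eqref{Gamma_def}, which should read $\Gamma(f)=\nu(M(F)-F)\mu^{-1/2}+(\mathbf{I}-\mathbf{P})f$.
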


To fully state the expression of $\Gamma$ in \eqref{nonlinear_op}, we derive the explicit expression of $Q_i$ and $\mathcal{Q}_{ij}$ in the following lemma.
\begin{lemma}\label{lemma:Q_property}
(i) $Q_i$ in \eqref{Q_i_def} takes the following form: 
\begin{align}
    &    Q_i = \frac{P_i(\rho_\vartheta,U_\vartheta,T_\vartheta)}{R_i(\rho_\vartheta,T_\vartheta)}.  \label{Q_i_property}
\end{align}
Here $R_i(\rho_\vartheta,T_\vartheta) = r_{1,i} (\rho_\vartheta)^{r_{2,i}} (T_\vartheta)^{r_{3,i}}$ is monomial, $r_{1,i}>0$ is a positive constant and $r_{2,i},r_{3,i}\geq 0$ are non-negative constants. $P_i$ is a polynomial
\begin{align*}
    & P_i(\rho_\vartheta,U_{\vartheta,1},U_{\vartheta,2},U_{\vartheta,3},T_\vartheta) = \sum_{m\in \mathcal{S}_i} a_m (\rho_\vartheta)^{m_1} (U_{\vartheta,1})^{m_2} (U_{\vartheta,2})^{m_3} (U_{\vartheta,3})^{m_4} (T_\vartheta)^{m_5}.
\end{align*}
Here $a_m$ is a constant, $m=(m_1,\cdots,m_5)$, where $m_i\geq 0$ are non-negative constants, and $\mathcal{S}_i$ corresponds to a collection of finitely many $m$.

(ii) $\mathcal{Q}_{ij}$ in \eqref{Q_ij} takes the following form:
\begin{align}
    &\mathcal{Q}_{ij}:= [D^2_{(\rho_\vartheta,\rho_\vartheta U_\vartheta,G_\vartheta)} M(\vartheta)]_{ij} = \frac{P_{ij}(\rho_\vartheta,v-U_\vartheta,U_\vartheta,T_\vartheta)}{R_{ij}(\rho_\vartheta,T_\vartheta)}M(\vartheta)  .   \label{Q_ij_property}
\end{align}
Here $R_{ij}(\rho_\vartheta,T_\vartheta) = r_{1,ij}(\rho_\vartheta)^{r_{2,ij}} (T_\vartheta)^{r_{3,ij}}$ is a monomial, $r_{1,ij}>0$ is a positive constant and $r_{2,ij},r_{3,ij}\geq 0$ are powers of non-negative integers. $P_{ij}$ is a polynomial
\begin{align*}
    &P_{ij}(\rho_\vartheta,v_1-U_{\vartheta,1},v_2-U_{\vartheta,2},v_3-U_{\vartheta,3},U_{\vartheta,1},U_{\vartheta,2},U_{\vartheta,3},T_\vartheta) \\
& = \sum_{m\in \mathcal{S}_{ij}} a_m (\rho_\vartheta)^{m_1} (v_1-U_{\vartheta,1})^{m_2} (v_2-U_{\vartheta,2})^{m_3} (v_3-U_{\vartheta,3})^{m_4} (U_{\vartheta,1})^{m_5} (U_{\vartheta,2})^{m_6} (U_{\vartheta,3})^{m_7} (T_\vartheta)^{m_8} ,
\end{align*}
here $a_m$ is a constant, $m=(m_1,\cdots,m_8),$ where $m_i\geq 0$ are non-negative integers, and $S_{ij}$ corresponds to a collection of finitely many $m=(m_1,\cdots,m_8)$.

\end{lemma}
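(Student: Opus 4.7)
The proof is a direct computation via the chain rule. The key structural observation is that the change of variables from the conserved hydrodynamic moments $(\rho_\vartheta, m_\vartheta, G_\vartheta) := (\rho_\vartheta, \rho_\vartheta U_\vartheta, G_\vartheta)$ back to the primary macroscopic variables $(\rho_\vartheta, U_\vartheta, T_\vartheta)$ is algebraic with monomial denominators in $\rho_\vartheta$. Indeed, from \eqref{vartheta_def},
\[
U_\vartheta = \frac{m_\vartheta}{\rho_\vartheta}, \qquad T_\vartheta = \frac{1}{3\rho_\vartheta}\Bigl(\sqrt{6}\,G_\vartheta + 3\rho_\vartheta - \frac{|m_\vartheta|^2}{\rho_\vartheta}\Bigr),
\]
so every entry of the Jacobian $\partial(\rho_\vartheta, U_\vartheta, T_\vartheta)/\partial(\rho_\vartheta, m_\vartheta, G_\vartheta)$, and of all its higher derivatives, is a rational function whose numerator is polynomial in $(\rho_\vartheta, U_\vartheta, T_\vartheta)$ and whose denominator is a monomial in $\rho_\vartheta$ alone.

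For part (i), I would apply the chain rule once to the composite $\rho_\vartheta^\eta T_\vartheta^\omega$ to obtain a sum of two terms of the form $\eta\,\rho_\vartheta^{\eta-1}T_\vartheta^\omega\,\partial_\bullet \rho_\vartheta$ and $\omega\,\rho_\vartheta^\eta T_\vartheta^{\omega-1}\,\partial_\bullet T_\vartheta$, where $\partial_\bullet$ ranges over derivatives with respect to the conserved variables. Substituting the rational expressions for these partial derivatives and collecting over a common denominator of the form $r_{1,i}\,\rho_\vartheta^{r_{2,i}}T_\vartheta^{r_{3,i}}$ yields the representation \eqref{Q_i_property}, the numerator $P_i$ being polynomial in $(\rho_\vartheta, U_\vartheta, T_\vartheta)$ by construction.

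For part (ii), I would exploit the logarithmic-derivative identity
\[
D^2 M(\vartheta) = M(\vartheta)\bigl(D\log M(\vartheta)\otimes D\log M(\vartheta) + D^2\log M(\vartheta)\bigr),
\]
combined with
\[
\log M(\vartheta) = \log\rho_\vartheta - \tfrac{3}{2}\log(2\pi T_\vartheta) - \tfrac{|v-U_\vartheta|^2}{2T_\vartheta}.
\]
Differentiating in $(\rho_\vartheta, U_\vartheta, T_\vartheta)$ immediately gives rational expressions with monomial denominators in $(\rho_\vartheta, T_\vartheta)$ and numerators polynomial in the shifted velocity $v - U_\vartheta$ together with $(\rho_\vartheta, U_\vartheta, T_\vartheta)$. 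Composing with the Jacobian-of-Jacobian relative to $(\rho_\vartheta, m_\vartheta, G_\vartheta)$ from the first paragraph preserves this structure, introducing at most additional monomial factors of $\rho_\vartheta$ in the denominator. After factoring $M(\vartheta)$ out one recovers \eqref{Q_ij_property}; crucially, $v$ cannot enter $R_{ij}$, because its only occurrence in $M(\vartheta)$ is through the exponent $-|v-U_\vartheta|^2/(2T_\vartheta)$, and moment differentiation never places $v$-dependent quantities into denominators.

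The main obstacle is bookkeeping rather than any analytic difficulty: after applying the chain rule (once for (i), twice for (ii)) one must check that a single monomial in $(\rho_\vartheta, T_\vartheta)$ suffices as a common denominator across all entries and collect the polynomial remainders accordingly. Both checks are automatic because the entire computation involves only finitely many multiplicative factors of $\rho_\vartheta$ and $T_\vartheta$, so the least common multiple of all denominators remains a monomial of the claimed form.
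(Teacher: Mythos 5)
Your proposal is correct and follows essentially the same route as the paper: compute $D_{(\rho_\vartheta,\rho_\vartheta U_\vartheta,G_\vartheta)}$ via the chain rule by composing with the explicit inverse Jacobian $D_{(\rho_\vartheta,\rho_\vartheta U_\vartheta,G_\vartheta)}(\rho_\vartheta,U_\vartheta,T_\vartheta)$, whose entries are rational with denominators that are monomials in $\rho_\vartheta$ alone, and observe that the extra denominator powers of $T_\vartheta$ arise only from $\partial_T(\rho^\eta T^\omega)$ in (i) and from $\partial_T$, $\partial_{U_i}$ acting on the Gaussian exponent in (ii). The only cosmetic difference is that for (ii) you organize the second-derivative computation through the logarithmic-derivative identity $D^2 M = M\,(D\log M\otimes D\log M + D^2\log M)$, which streamlines the bookkeeping and makes transparent why $v$ can never enter the denominator, whereas the paper simply writes out the matrix products for $D M(F)$ and $D^2 M(F)$ explicitly; both yield the same conclusion.
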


\begin{proof}
We can compute the derivative in $Q_i$ as
\begin{align*}
    &   D_{(\rho, \rho U, G)} \rho^\eta T^\omega  = \begin{bmatrix}
        1 & -\frac{U_1}{\rho} & -\frac{U_2}{\rho} & -\frac{U_3}{\rho} & \frac{-3T+|U|^2+3}{3\rho} \\
        0 & \frac{1}{\rho} & 0 & 0 & - \frac{2U_1}{3\rho} \\
        0 & 0 & \frac{1}{\rho} & 0 & -\frac{2U_2}{3\rho} \\
        0 & 0 & 0& \frac{1}{\rho} & -\frac{2U_3}{3\rho} \\
       0 & 0 & 0 & 0 & \frac{\sqrt{\frac{2}{3}}}{\rho}
    \end{bmatrix} \begin{bmatrix}
        \eta \rho^{\eta-1} T^\omega \\
        0 \\
        0 \\
        0 \\
        \omega T^{\omega-1} \rho^\eta
    \end{bmatrix}.
\end{align*}
Here the first matrix corresponds to $(D_{(\rho,\rho U, G)}(\rho,U,T))^{-1}$. This concludes \eqref{Q_i_property}.

Next we compute the derivative in $\mathcal{Q}_{ij}$ as
\begin{align*}
    &   D_{(\rho, \rho U, G)} \frac{\rho}{(2\pi T)^{3/2}} e^{-\frac{|v-U|^2}{2T}}  = \begin{bmatrix}
        1 & -\frac{U_1}{\rho} & -\frac{U_2}{\rho} & -\frac{U_3}{\rho} & \frac{-3T+|U|^2+3}{3\rho} \\
        0 & \frac{1}{\rho} & 0 & 0 & - \frac{2U_1}{3\rho} \\
        0 & 0 & \frac{1}{\rho} & 0 & -\frac{2U_2}{3\rho} \\
        0 & 0 & 0& \frac{1}{\rho} & -\frac{2U_3}{3\rho} \\
       0 & 0 & 0 & 0 & \frac{\sqrt{\frac{2}{3}}}{\rho}
    \end{bmatrix} \begin{bmatrix}
        \frac{1}{\rho} \\
        -\frac{U_1-v_1}{T} \\
        -\frac{U_2-v_2}{T}\\
        -\frac{U_3-v_3}{T} \\
        \frac{|v-U|^2-3T}{2T^2}
    \end{bmatrix} M(F) .
\end{align*}
\hide
\\
    & = \begin{bmatrix}
        1 + \frac{U_1(U_1-v_1)}{T} + \frac{U_2(U_2-v_2)}{T} + \frac{U_3(U_3-v_3)}{T} + \frac{(-3T + |U|^2 +3)(|v-U|^2-3T)}{6T^2} \\
        \frac{v_1}{T} - \frac{U_1|v-U|^2}{3T^2} \\
        \frac{v_2}{T} - \frac{U_2|v-U|^2}{3T^2} \\
        \frac{v_3}{T} - \frac{U_3|v-U|^2}{3T^2} \\
        \sqrt{\frac{2}{3}} \frac{|v-U|^2-3T}{2T^2}
    \end{bmatrix} \frac{1}{(2\pi T)^{3/2}} e^{-\frac{|v-U|^2}{2T}}  \\
    & = \begin{bmatrix}
       -\frac{|v|^2}{2T} + \frac{|v-U|^2}{2T^2} + \frac{|U|^2|v-U|^2}{6T} + \frac{5}{2} -\frac{3}{2T} \\
        \frac{v_1}{T} - \frac{U_1|v-U|^2}{3T^2} \\
        \frac{v_2}{T} - \frac{U_2|v-U|^2}{3T^2} \\
        \frac{v_3}{T} - \frac{U_3|v-U|^2}{3T^2} \\
        \sqrt{\frac{2}{3}} \frac{|v-U|^2-3T}{2T^2}
    \end{bmatrix} \frac{1}{(2\pi T)^{3/2}} e^{-\frac{|v-U|^2}{2T}}.
\end{align*}
\unhide

The second derivative becomes
\begin{align*}
    & D^2_{(\rho,\rho U,G)} M(F) = \begin{bmatrix}
        1 & -\frac{U_1}{\rho} & -\frac{U_2}{\rho} & -\frac{U_3}{\rho} & \frac{-3T+|U|^2+3}{3\rho} \\
        0 & \frac{1}{\rho} & 0 & 0 & - \frac{2U_1}{3\rho} \\
        0 & 0 & \frac{1}{\rho} & 0 & -\frac{2U_2}{3\rho} \\
        0 & 0 & 0& \frac{1}{\rho} & -\frac{2U_3}{3\rho} \\
       0 & 0 & 0 & 0 & \frac{\sqrt{\frac{2}{3}}}{\rho}
    \end{bmatrix}  \\
    & D_{(\rho,U,T)} \left(\begin{bmatrix}
        1 & -\frac{U_1}{\rho} & -\frac{U_2}{\rho} & -\frac{U_3}{\rho} & \frac{-3T+|U|^2+3}{3\rho} \\
        0 & \frac{1}{\rho} & 0 & 0 & - \frac{2U_1}{3\rho} \\
        0 & 0 & \frac{1}{\rho} & 0 & -\frac{2U_2}{3\rho} \\
        0 & 0 & 0& \frac{1}{\rho} & -\frac{2U_3}{3\rho} \\
       0 & 0 & 0 & 0 & \frac{\sqrt{\frac{2}{3}}}{\rho}
    \end{bmatrix} \begin{bmatrix}
        \frac{1}{\rho} \\
        -\frac{U_1-v_1}{T} \\
        -\frac{U_2-v_2}{T}\\
        -\frac{U_3-v_3}{T} \\
        \frac{|v-U|^2-3T}{2T^2}
    \end{bmatrix} M(F) \right).
\end{align*}
This concludes \eqref{Q_ij_property}.

\end{proof}

\begin{lemma}\label{lemma:Pgamma}
The nonlinear operator $\Gamma$ in \eqref{nonlinear_op} satisfies
\begin{align}
    & \mathbf{P}(\Gamma(f)) = 0. \notag
\end{align}
\end{lemma}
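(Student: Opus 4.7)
Since $\mathbf{P} g = \sum_{i=0}^{4} \langle g,\chi_i\rangle \chi_i$ by definition, it suffices to prove
$$\langle \Gamma(f),\chi_i\rangle = 0 \quad \text{for each } i=0,1,2,3,4.$$
Rather than attacking the four-term decomposition $\Gamma_1+\Gamma_2+\Gamma_3+\Gamma_4$ in \eqref{nonlinear_op} piece by piece—which would force a delicate cancellation between the linear and quadratic Taylor remainders—I would fall back to the compact original definition \eqref{Gamma_def}:
$$\Gamma(f) \;=\; \frac{\nu\bigl(M(F)-F\bigr)}{\sqrt{\mu}} \;-\; (\mathbf{I}-\mathbf{P})f, \qquad F=\mu+\sqrt{\mu}\,f.$$

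The contribution of the second piece is immediate: by the very definition of $\mathbf{P}$, the microscopic part $(\mathbf{I}-\mathbf{P})f$ is $L^2_v$-orthogonal to each $\chi_i$, so $\langle (\mathbf{I}-\mathbf{P})f,\chi_i\rangle=0$. For the first piece, the collision frequency $\nu(x)=\rho^\eta T^\omega$ is a function of $x$ alone and pulls outside the $v$-integral, leaving
$$\langle \Gamma(f),\chi_i\rangle \;=\; \nu(x) \int_{\R^3} \bigl(M(F)-F\bigr)\,\frac{\chi_i(v)}{\sqrt{\mu(v)}}\,\dd v.$$

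Now $\chi_0/\sqrt{\mu}=1$, $\chi_i/\sqrt{\mu}=v_i$ for $i=1,2,3$, and $\chi_4/\sqrt{\mu}=(|v|^2-3)/\sqrt{6}$, so each $\chi_i/\sqrt{\mu}$ lies in the span of the classical collision invariants $\{1,v_1,v_2,v_3,|v|^2\}$. The macroscopic parameters $\rho(t,x)$, $U(t,x)$, $T(t,x)$ appearing in the local Maxwellian $M(F)$ in \eqref{local_maxwellian} are defined precisely so that $M(F)$ and $F$ share the same mass, momentum, and kinetic-energy densities; that is,
$$\int_{\R^3}\bigl(M(F)-F\bigr)\,\dd v=0,\qquad \int_{\R^3}\bigl(M(F)-F\bigr)v_i\,\dd v=0,\qquad \int_{\R^3}\bigl(M(F)-F\bigr)|v|^2\,\dd v=0.$$
Linearly combining these three identities gives $\int (M(F)-F)(\chi_i/\sqrt{\mu})\,\dd v = 0$ for every $i=0,\ldots,4$, and the lemma follows.

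There is no real obstacle here: the identity is the nonlinear reflection of the collisional conservation laws for the BGK operator, built into the construction of $M(F)$. The only minor point to be careful about is to avoid expanding $\Gamma$ via \eqref{nonlinear_op}—the Taylor-remainder form hides the invariance behind explicit combinations of $Q_i$ and $\mathcal{Q}_{ij}$, whereas the pre-expansion form \eqref{Gamma_def} exposes it in a single line.
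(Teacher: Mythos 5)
Your proof is correct and follows exactly the same route as the paper: both return to the compact definition \eqref{Gamma_def}, kill the $(\mathbf{I}-\mathbf{P})f$ piece by orthogonality, pull $\nu(x)$ out of the $v$-integral, and invoke the conservation of mass, momentum, and energy built into the construction of $M(F)$. Your remark about avoiding the four-term expansion \eqref{nonlinear_op} matches the paper's implicit choice as well.
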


\begin{proof}
We use the definition of $\Gamma(f)$ in \eqref{Gamma_def} and have
\begin{align*}
    & \mathbf{P}(\Gamma(f)) = \mathbf{P}\Big(\frac{\nu(M(F)-F)}{\sqrt{\mu}} \Big) - \mathbf{P}((\mathbf{I}-\mathbf{P})f) = \nu\mathbf{P}\Big(\frac{(M(F)-F)}{\sqrt{\mu}} \Big) \\
    & = \sum_{i=0}^4 \nu \chi_i \int_{\mathbb{R}^3} (M(F)-F)\frac{\chi_i}{\sqrt{\mu}} \dd v = 0.
\end{align*}
In the second line, we used {\color{black}$\frac{\chi_0}{\sqrt{\mu}} = 1, \ \frac{\chi_{i}}{\sqrt{\mu}} = v_i, i\in \{1,2,3\}, \  \frac{\chi_4}{\sqrt{\mu}} = \frac{|v|^2-3}{2}$} and the conservation of mass, momentum and energy.

\end{proof}

\subsection{$L^\infty$ estimate of $\Gamma$}\label{sec:gamma_linfty}

As discussed in the introduction, we aim to control the nonlinear operator $\Gamma$ in $L^\infty$ space. In this section, we establish the $L^\infty$ control of $\Gamma$ in Lemma \ref{lemma:gamma_property}. This result will play a crucial role in proving the a priori estimate.

\begin{lemma}\label{lemma:macroscopic_control}
We can control the macroscopic quantities using the $L^\infty$ estimate of $f$ as follows:

If $\Vert wf\Vert_{L^\infty_{x,v}} \lesssim \delta$, then it holds:
\begin{align}
    & \Vert \rho-1, U, T-1 \Vert_{L^\infty_x} \lesssim \delta.  \label{macro_control}
\end{align}
This further leads to
\begin{align}
    & \int_0^1 |Q_i| \dd \vartheta  \lesssim 1,  \label{Q_i_control}\\
    & (1+|v|)^\beta e^{\theta|v|^2}\int_0^1 |\mathcal{Q}_{ij}|(1-\vartheta)\dd \vartheta \mu^{-1/2}\lesssim 1 \ \text{ for }\theta<\frac{1}{4} . \label{Q_ij_control}
\end{align}

\end{lemma}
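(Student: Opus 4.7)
The plan is to proceed in three stages: first bound the macroscopic moments $\rho, U, T$ of $F = \mu + \sqrt\mu f$, then propagate this smallness to the $\vartheta$-interpolated quantities $\rho_\vartheta, U_\vartheta, T_\vartheta$ defined in \eqref{vartheta_def}, and finally insert these bounds into the structural formulas \eqref{Q_i_property} and \eqref{Q_ij_property}.

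For the first stage, the assumption $\|wf\|_{L^\infty_{x,v}}\lesssim\delta$ gives the pointwise bound $|f(x,v)|\lesssim\delta e^{-\theta|v|^2}$. Combined with $\int\mu\,\dd v=1$, $\int v\mu\,\dd v=0$, and $\int|v|^2\mu\,\dd v=3$, the definitions of the moments yield
\[
\rho - 1 = \int_{\R^3}\sqrt{\mu}\,f\,\dd v,\qquad \rho U = \int_{\R^3} v\sqrt{\mu}\,f\,\dd v,
\]
and $3\rho(T-1)=\int_{\R^3}|v|^2\sqrt{\mu}\,f\,\dd v - \rho|U|^2 - 3(\rho-1)$. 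Since $\sqrt{\mu}\,e^{-\theta|v|^2}$ has finite polynomial moments in $v$, each right-hand side is of order $\delta$, and dividing by $\rho\approx 1$ (legitimate for $\delta$ small) recovers \eqref{macro_control}.

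Next, the algebraic relations in \eqref{vartheta_def} give $|\rho_\vartheta-1|\leq\vartheta|\rho-1|\lesssim\delta$ directly; the formula $U_\vartheta=\vartheta\rho U/\rho_\vartheta$ gives $|U_\vartheta|\lesssim\delta$ since $\rho_\vartheta$ is bounded below; and solving for $T_\vartheta$ yields
\[
3\rho_\vartheta(T_\vartheta-1)=\vartheta\rho|U|^2-\rho_\vartheta|U_\vartheta|^2+3\vartheta\rho(T-1),
\]
so $|T_\vartheta-1|\lesssim\delta$ uniformly in $\vartheta\in[0,1]$. In particular $\rho_\vartheta$ and $T_\vartheta$ are bounded above and bounded away from zero, so in \eqref{Q_i_property} the monomial denominator $R_i(\rho_\vartheta,T_\vartheta)$ is bounded below while the polynomial $P_i$ is bounded above, yielding \eqref{Q_i_control} after integrating in $\vartheta$.

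The delicate step is \eqref{Q_ij_control}. Using \eqref{Q_ij_property} and $\mu^{-1/2}=(2\pi)^{3/4}e^{|v|^2/4}$,
\[
e^{\theta|v|^2}|\mathcal{Q}_{ij}|\mu^{-1/2}\lesssim\frac{|P_{ij}(\rho_\vartheta,v-U_\vartheta,U_\vartheta,T_\vartheta)|}{R_{ij}(\rho_\vartheta,T_\vartheta)}\cdot\frac{\rho_\vartheta}{T_\vartheta^{3/2}}\exp\!\left(\theta|v|^2+\tfrac{|v|^2}{4}-\tfrac{|v-U_\vartheta|^2}{2T_\vartheta}\right),
\]
where $P_{ij}$ is a polynomial in $v$ whose coefficients are uniformly bounded by the previous stage. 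Since $U_\vartheta=O(\delta)$ and $T_\vartheta=1+O(\delta)$, for $\delta$ sufficiently small the quadratic coefficient in the exponent satisfies $\theta+\tfrac14-\tfrac{1}{2T_\vartheta}<0$ uniformly in $\vartheta$, thanks to the strict inequality $\theta<\tfrac14$; the lower-order cross term $U_\vartheta\cdot v/T_\vartheta$ is absorbed by a small fraction of this negative quadratic. The resulting exponential decay absorbs the polynomial $P_{ij}$ and gives the uniform bound. The main obstacle throughout is precisely this weight balance: the restriction $\theta<\tfrac14$ is sharp for preserving Gaussian decay after the small perturbation of $T_\vartheta$ away from unity, and all the preceding estimates are tailored to make this quantitative bound hold uniformly in $\vartheta$.
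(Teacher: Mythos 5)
Your proposal is correct and follows essentially the same route as the paper: first bounding $\rho-1$, $U$, $T-1$ via the weighted moment integrals, propagating these to the $\vartheta$-interpolants through the algebraic relations in \eqref{vartheta_def}, and then bounding $Q_i$ and $\mathcal{Q}_{ij}$ using the monomial/polynomial structure in \eqref{Q_i_property} and \eqref{Q_ij_property}, with the Gaussian weight balance $\theta+\tfrac14-\tfrac{1}{2T_\vartheta}<0$ closing the argument exactly as the paper's absorption of the polynomial by a slightly widened Gaussian. Your writing of the quadratic exponent coefficient explicitly is a slightly cleaner presentation of the same computation.
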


\begin{proof}
We can estimate the density as
\begin{align*}
    &   |\rho(t,x)-1|  =     \Big|\int_{\mathbb{R}^3} [\mu + \sqrt{\mu} f  ]\dd v - 1 \Big| \leq  \Vert wf\Vert_{L^\infty_{x,v}} \int_{\mathbb{R}^3} \sqrt{\mu} w^{-1} \dd v  \leq  C\delta.
\end{align*}

Then we estimate the momentum as
\begin{align*}
    &     |\rho(t,x)U(t,x)| = \Big|  \int_{\mathbb{R}^3} [\mu+\sqrt{\mu}f] v \dd v   \Big| \leq \Vert wf\Vert_{L^\infty_{x,v}}\int_{\mathbb{R}^3} \sqrt{\mu(v)}  w^{-1}(v) |v| \dd v \leq C\delta.
\end{align*}
Thus
\begin{align*}
    & |U(t,x)| \leq \frac{C\delta}{\inf\{\rho(t,x)\}}\leq \frac{C\delta}{1-C\delta} \leq 2C\delta.
\end{align*}

Last we compute the energy as
\begin{align*}
    &  |3\rho(t,x)T(t,x)-3| =  \Big|\int_{\mathbb{R}^3} (\mu+\sqrt{\mu}f) |v-U(t,x)|^2   \dd v -3 \Big| \\
    &\leq |U(t,x)|^2 + \Vert wf\Vert_{L^\infty_{x,v}} \int_{\mathbb{R}^3} \sqrt{\mu(v)} w^{-1}(v) |v-U(t,x)|^2 \dd v \\
    & \lesssim \delta^2 + \Vert wf\Vert_{L^\infty_{x,v}} \int_{\mathbb{R}^3} \sqrt{\mu(v)} w^{-1}(v) [|v|^2 + |U(t,x)|^2] \dd v\\
    & \leq \delta^2 + C\delta( C+ (2C\delta)^2 ) \lesssim \delta.
\end{align*}
With 
\begin{align*}
    &   T(t,x)-1 = \frac{\rho(t,x)T(t,x)-1}{\rho(t,x)} - \frac{\rho(t,x)-1}{\rho(t,x)},
\end{align*}
we derive that,
\begin{align*}
   | T(t,x)-1| \lesssim \frac{\delta}{1-C\delta} + \frac{\delta}{1-C\delta} \lesssim \delta.
\end{align*}
We conclude \eqref{macro_control}.

Next we prove \eqref{Q_i_control}. Recall the definition of $\rho_\vartheta,U_\vartheta,T_\vartheta$ in Lemma \ref{lemma:operator_property}, from \eqref{macro_control} it is straightforward to verify that for some $C$
\begin{align}
    |\rho_\vartheta-1,U_\vartheta,T_\vartheta-1| \leq C \delta. \label{macro_theta_control}
\end{align}

By the property of $Q_i$ in \eqref{Q_i_property}, we apply \eqref{macro_theta_control} to control the denominator as
\begin{align*}
    & 1\lesssim r_{1,i} (1-C\delta)^{r_{2,i}} (1-C\delta)^{r_{3,i}} \leq  R_i(\rho_\vartheta,T_\vartheta) .
\end{align*}
We control the numerator as
\begin{align*}
    &   P_i(\rho_\vartheta,U_\vartheta,T_\vartheta) \lesssim \sum_{m\in \mathcal{S}_i} |a_m|(1+C\delta)^{m_1}(C\delta)^{m_2+m_3+m_4}(1+C\delta)^{m_5} \lesssim 1.
\end{align*}
This concludes \eqref{Q_i_control}.

Last we prove \eqref{Q_ij_control}. From the property of $\mathcal{Q}_{ij}$ in \eqref{Q_ij_property}, we apply \eqref{macro_theta_control} to control the denominator as
\begin{align*}
    &    1 \lesssim r_{1,ij} (1-C\delta)^{r_{2,ij}} (1-C\delta)^{r_{3,ij}} \leq R_{ij}(\rho_\vartheta,T_\vartheta)
\end{align*}
We control the numerator as
\begin{align*}
    &   P_{ij}(\rho_\vartheta,v_1-U_{\vartheta,1},v_2-U_{\vartheta,2},v_3-U_{\vartheta,3},U_{\vartheta,1},U_{\vartheta,2},U_{\vartheta,3},T_\vartheta) \\
    & \lesssim \sum_{m\in \mathcal{S}_{ij}} |a_m|(1+C\delta)^{m_1} (v_1-U_{\vartheta,1})^{m_2}(v_2-U_{\vartheta,2})^{m_3}(v_3-U_{\vartheta,3})^{m_4}(C\delta)^{m_5+m_6+m_7}(1+C\delta)^{m_9}M(\vartheta) \\
    & \lesssim \sum_{m\in \mathcal{S}_{ij}} |v-U_\vartheta|^{m_2+m_3+m_4} \frac{1+C\delta}{(2\pi (1-C\delta))^{3/2}} e^{-\frac{|v-U_\vartheta|^2}{2(1+C\delta)}} \lesssim e^{-\frac{|v|^2}{2(1+C\delta+C(\theta))}}.
\end{align*}
In the last line, we first bound the polynomial by an exponential as $|v-U_\vartheta|^{m_2+m_3+m_4}\lesssim e^{-c|v-U_\vartheta|^2}$ for some small $c$ that depends on $\theta$ to achieve
\begin{align*}
    & |v-U_\vartheta|^{m_2+m_3+m_4} e^{-\frac{|v-U_\vartheta|^2}{2(1+C\delta)}} \lesssim e^{-\frac{|v-U_\vartheta|^2}{2(1+C\delta+C(\theta))}}.
\end{align*}
Here $C(\theta)$ is a small constant that depends on $\theta$.

Then we bound
\begin{align*}
    & e^{-\frac{|v-U_\vartheta|^2}{2(1+C\delta+C(\theta))}} = e^{\frac{-|v|^2 + 2v\cdot U_\vartheta - |U_\vartheta|^2}{2(1+C\delta+C(\theta))}} \lesssim e^{\frac{-|v|^2  + |v|^2 |U_\vartheta|^2 +2}{2(1+C\delta+C(\theta))}} \lesssim e^{\frac{-(1-C^2\delta^2)|v|^2}{2(1+C\delta+C(\theta))}} \lesssim e^{-\frac{|v|^2}{2(1+C\delta+2C(\theta))}}.
\end{align*}
Since $\delta\ll 1$ and $\theta<\frac{1}{4}$, we can choose $C(\theta)$ and $\delta$ to be small enough such that
\begin{align*}
    &   (1+|v|)^{\beta}e^{\theta|v|^2} e^{-\frac{|v|^2}{2(1+C\delta+2C(\theta))}} e^{|v|^2/4} \lesssim 1.
\end{align*}
Here the inequality does not depend on $\delta$. We conclude the lemma.

\end{proof}

\begin{lemma}\label{lemma:gamma_property}
When $\Vert wf\Vert_{L^\infty_{x,v}} \leq \Vert e^{\lambda t}wf\Vert_{L^\infty_{x,v}} \lesssim \delta$, the following $L^\infty$ control holds for the nonlinear operator given in \eqref{nonlinear_op}:

\begin{align}
    & \Vert w \Gamma_i(f) \Vert_{L^\infty_{x,v}} \lesssim \Vert wf\Vert_{L^\infty_{x,v}}^2,  \ \Vert e^{2\lambda t}w\Gamma_i(f)\Vert_{L^\infty_{x,v}} \lesssim \Vert e^{\lambda t}wf\Vert_{L^\infty_{x,v}}^2, \ i=1,2,3, \label{gamma_est_1}
\end{align}
\begin{align}
    & \Vert w \Gamma_4(f)\Vert_{L^\infty_{x,v}} \lesssim \Vert wf\Vert_{L^\infty_{x,v}}^3, \ \Vert e^{2\lambda t}w\Gamma_4(f)\Vert_{L^\infty_{x,v}} \lesssim \Vert e^{\lambda t}wf\Vert_{L^\infty_{x,v}}^3.   \label{gamma_est_4}
\end{align}

\end{lemma}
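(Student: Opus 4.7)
The proof naturally splits by handling each of the four pieces $\Gamma_1,\ldots,\Gamma_4$ in \eqref{nonlinear_op} separately, and the main ingredients are already assembled in Lemma \ref{lemma:macroscopic_control}. My plan is to first build two elementary observations that will be reused throughout: (a) for any $i\in\{0,\ldots,4\}$, $|\langle f,\chi_i\rangle|\lesssim \Vert wf\Vert_{L^\infty_{x,v}}$, which follows by inserting $w\,w^{-1}$ and noting $\int|\chi_i|\sqrt{\mu}^{-1}\cdot\sqrt{\mu}w^{-1}\,\dd v<\infty$ since $w^{-1}=e^{-\theta|v|^2}$ with $\theta<1/4$; and (b) $\Vert w\mathbf{P}f\Vert_{L^\infty_{x,v}}\lesssim \Vert wf\Vert_{L^\infty_{x,v}}$, using (a) together with $w(v)|\chi_i(v)|\lesssim 1$ for $\theta<1/4$. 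Combining (a) with the estimate \eqref{Q_i_control} from Lemma \ref{lemma:macroscopic_control}, we immediately obtain the pointwise bound
\begin{equation*}
|\nu_p(t,x)|\leq \sum_{i}|\langle f,\chi_i\rangle|\int_0^1|Q_i|\,\dd\vartheta\lesssim \Vert wf\Vert_{L^\infty_{x,v}}.
\end{equation*}

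For $\Gamma_1(f)=\nu_p\mathbf{P}f$ I would simply combine the bound on $\nu_p$ with (b) to get $\Vert w\Gamma_1(f)\Vert_{L^\infty_{x,v}}\lesssim \Vert wf\Vert_{L^\infty_{x,v}}^{2}$. For $\Gamma_2(f)=-\nu_p f$, write $|w\Gamma_2(f)|\leq |\nu_p|\,|wf|\lesssim \Vert wf\Vert_{L^\infty_{x,v}}^{2}$. For $\Gamma_3(f)$, the key step is to move the $w=e^{\theta|v|^2}$ inside and use \eqref{Q_ij_control} to bound $w\int_0^1|\mathcal{Q}_{ij}|(1-\vartheta)\,\dd\vartheta\,\mu^{-1/2}\lesssim 1$; together with (a) applied twice this gives $\Vert w\Gamma_3(f)\Vert_{L^\infty_{x,v}}\lesssim \sum_{i,j}|\langle f,\chi_i\rangle||\langle f,\chi_j\rangle|\lesssim \Vert wf\Vert_{L^\infty_{x,v}}^{2}$. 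Finally, $\Gamma_4(f)=\nu_p\cdot(\text{same structure as }\Gamma_3)$, so the previous two bounds multiply to yield $\Vert w\Gamma_4(f)\Vert_{L^\infty_{x,v}}\lesssim \Vert wf\Vert_{L^\infty_{x,v}}^{3}$, proving \eqref{gamma_est_4}.

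For the time-weighted versions, the observation is that $\Gamma_1,\Gamma_2,\Gamma_3$ are quadratic in $f$ while $\Gamma_4$ is cubic. In each product, I would distribute a single factor of $e^{\lambda t}$ onto one copy of $f$ and keep the remaining copies in the un-weighted norm; thus for $i=1,2,3$,
\begin{equation*}
\Vert e^{\lambda t}w\Gamma_i(f)\Vert_{L^\infty_{x,v}}\lesssim \Vert e^{\lambda t}wf\Vert_{L^\infty_{x,v}}\,\Vert wf\Vert_{L^\infty_{x,v}}\leq \Vert e^{\lambda t}wf\Vert_{L^\infty_{x,v}}^{2},
\end{equation*}
where in the last step we invoke the hypothesis $\Vert wf\Vert_{L^\infty_{x,v}}\leq \Vert e^{\lambda t}wf\Vert_{L^\infty_{x,v}}$. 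The cubic estimate \eqref{gamma_est_4} with $e^{\lambda t}$ is handled identically, placing the time exponential on any single $f$-factor and then upgrading the remaining two $\Vert wf\Vert_{L^\infty_{x,v}}$'s by the same hypothesis.

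I expect no serious obstacle: the proof is really a careful bookkeeping exercise once \eqref{Q_i_control}--\eqref{Q_ij_control} are in hand. The only technical point worth watching is the Gaussian absorption in the $\Gamma_3,\Gamma_4$ terms — one must verify that the weight $w=e^{\theta|v|^2}$ with $\theta<1/4$ is genuinely absorbed by the $\mu^{-1/2}$ compensated Maxwellian coming from $\mathcal{Q}_{ij}M(\vartheta)$, which is exactly what \eqref{Q_ij_control} encodes and why the smallness of $\delta$ (controlling the drift $U_\vartheta$ and the temperature spread around $1$) enters. No additional cancellation, integration-by-parts, or dissipation is required here.
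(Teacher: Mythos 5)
Your proposal is correct and follows essentially the same route as the paper's own proof: bound $\nu_p$ by $\Vert wf\Vert_{L^\infty_{x,v}}$ using \eqref{Q_i_control}, control $w\mathbf{P}f$ and $\langle f,\chi_i\rangle$ via $\theta<1/4$, and absorb $w\mu^{-1/2}$ into $\mathcal{Q}_{ij}$ through \eqref{Q_ij_control}, with $\Gamma_1,\Gamma_2,\Gamma_3$ quadratic and $\Gamma_4$ cubic. Your handling of the $e^{\lambda t}$ weight (placing it on one factor and upgrading the rest via the hypothesis $\Vert wf\Vert_{L^\infty_{x,v}}\le\Vert e^{\lambda t}wf\Vert_{L^\infty_{x,v}}$) is a tidier but equivalent rephrasing of what the paper does.
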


\begin{proof}
We first prove \eqref{gamma_est_1}. From \eqref{nonlinear_op}, we apply \eqref{Q_i_control} and compute
\begin{align*}
    & | w \Gamma_1(f) |  \lesssim  |w\mathbf{P}f| \sum_i  \Big| \langle f,\chi_i\rangle \int_0^1 Q_i\dd \vartheta \Big| \lesssim |\sum_i w\chi_i \langle f,\chi_i\rangle| \sum_i |\langle f,\chi_i\rangle| \\
    & \lesssim \Vert wf\Vert_{L^\infty_{x,v}}^2 \sum_i \langle w^{-1},\chi_i\rangle^2 \lesssim \Vert wf\Vert_{L^\infty_{x,v}}^2.
\end{align*}
Here we used $\theta<\frac{1}{4}$ so that $(1+|v|)^\beta e^{\theta|v|^2}\chi_i \lesssim 1$. The second inequality in \eqref{gamma_est_1} follows in the same computation:
\begin{align*}
    &  |e^{2\lambda t} w \Gamma_1(f)| \lesssim |\sum_i w\chi_i \langle e^{\lambda t}f,\chi_i\rangle| \sum_i |\langle e^{\lambda t}f,\chi_i\rangle| \lesssim \Vert e^{\lambda t}wf\Vert_{L^\infty_{x,v}}^2.
\end{align*}

Then for $\Gamma_2(f)$ we apply \eqref{Q_i_control} and have
\begin{align*}
    &  |w\Gamma_2(f)| \lesssim |wf|\sum_{i} \Big|\langle f,\chi_i\rangle \int_0^1 Q_i \dd \vartheta \Big| \lesssim \Vert wf\Vert_{L^\infty_{x,v}}^2 \sum_i \langle w^{-1},\chi_i\rangle \lesssim \Vert wf\Vert_{L^\infty_{x,v}}^2, \\
    & e^{\lambda t}|w\Gamma_2(f)|\lesssim |e^{\lambda t}wf|\sum_i \langle w^{-1},\chi_i\rangle \lesssim \Vert e^{\lambda t}wf\Vert_{L^\infty_{x,v}}^2.
\end{align*}

For $\Gamma_3(f)$, we apply \eqref{Q_ij_control} to have
\begin{align*}
    & |w\Gamma_3(f)| \lesssim  w\sum_{0\leq i,j\leq 4} \Big|\int_0^1 \mathcal{Q}_{ij}(1-\vartheta)\dd \vartheta \Big|  \mu^{-1/2} \Vert wf\Vert_{L^\infty_{x,v}}^2   \lesssim \Vert wf\Vert_{L^\infty_{x,v}}^2 \\
    & e^{2\lambda t}|w\Gamma_3(f)|\lesssim   w\sum_{0\leq i,j\leq 4} \Big|\int_0^1 \mathcal{Q}_{ij}(1-\vartheta)\dd \vartheta \Big|  \mu^{-1/2} \Vert e^{\lambda t} wf\Vert_{L^\infty_{x,v}}^2   \lesssim \Vert e^{\lambda t}wf\Vert_{L^\infty_{x,v}}^2.
\end{align*}
This concludes \eqref{gamma_est_1}.

Next, we prove \eqref{gamma_est_4}. We apply \eqref{Q_i_control} and \eqref{Q_ij_control} to have
\begin{align*}
    &    |w\Gamma_4(f)| \lesssim   \sum_i \langle |f|,\chi_i\rangle \Big|\int_0^1 Q_i \dd \vartheta \Big|  \sum_{0\leq j,k\leq 4}w\Big|\int_0^1 \mathcal{Q}_{ij}(1-\vartheta) \dd \vartheta \Big| \mu^{-1/2}  \langle |f|,\chi_j\rangle \langle |f|,\chi_k\rangle \lesssim \Vert wf\Vert_{L^\infty_{x,v}}^3.
\end{align*}
Similarly, we have
\begin{align*}
    &  |e^{2\lambda t}w\Gamma_4(f)| \lesssim \Vert e^{\lambda t}wf\Vert_{L^\infty_{x,v}}^3.
\end{align*}
This concludes \eqref{gamma_est_4}.

\end{proof}

\subsection{Stability estimate of $\Gamma$}\label{sec:gamma_stability}

To prove the existence and uniqueness of solutions, we will employ a sequential argument. In this section, we derive the $L^\infty$ stability estimate of $\Gamma$ in Lemma \ref{lemma:nonlinear_substraction}.

\begin{lemma}\label{lemma:Q_difference}
Let $F_1 = \mu + \sqrt{\mu}f_1$, $F_2 = \mu + \sqrt{\mu}f_2$ and assume that $\Vert wf_k\Vert_{L^\infty_{x,v}} \lesssim \delta, \ k=1,2.$ We denote $(\rho_k,U_k,T_k)$ as the macroscopic quantities of $F_k$ defined in \eqref{local_maxwellian}, then it holds that
\begin{align}
    &   \int_0^1|Q_i(\rho_{1,\vartheta}, U_{1,\vartheta}, T_{1,\vartheta}) - Q_i(\rho_{2,\vartheta}, U_{2,\vartheta}, T_{2,\vartheta})|\dd \vartheta \lesssim \Vert w(f_1-f_2)\Vert_{L^\infty_{x,v}}, \label{Q_i_difference}
\end{align}
\begin{align}
    & (1+|v|)^\beta e^{\theta|v|^2} \int_0^1 | \mathcal{Q}_{ij}(\rho_{1,\vartheta},v-U_{1,\vartheta},U_{1,\vartheta},T_{1,\vartheta})-\mathcal{Q}_{ij}(\rho_{2,\vartheta},v-U_{2,\vartheta},U_{2,\vartheta},T_{2,\vartheta}) | (1-\vartheta) \dd \vartheta \mu^{-1/2} \notag \\
    &\lesssim \Vert w(f_1-f_2)\Vert_{L^\infty_{x,v}}.  \label{Q_ij_difference}
\end{align}

\end{lemma}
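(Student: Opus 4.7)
The plan is to reduce both \eqref{Q_i_difference} and \eqref{Q_ij_difference} to a bound on differences of the macroscopic quantities, and then exploit the smoothness of $Q_i$ and $\mathcal{Q}_{ij}$ as functions of $(\rho_\vartheta, U_\vartheta, T_\vartheta)$ near the equilibrium $(1,0,1)$. First I would establish the base stability bound
\begin{equation*}
\Vert \rho_1 - \rho_2\Vert_{L^\infty_x} + \Vert U_1 - U_2\Vert_{L^\infty_x} + \Vert T_1 - T_2\Vert_{L^\infty_x} \lesssim \Vert w(f_1-f_2)\Vert_{L^\infty_{x,v}}.
\end{equation*}
The density bound is immediate from $\rho_1 - \rho_2 = \int \sqrt{\mu}(f_1-f_2)\dd v$. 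For the velocity, subtract $\rho_k U_k = \int v\sqrt{\mu}f_k \dd v$ and rewrite $\rho_1(U_1-U_2) = \int v\sqrt{\mu}(f_1-f_2)\dd v - (\rho_1-\rho_2)U_2$; the bound follows from $\rho_1 \geq 1 - C\delta$ and $|U_2| \lesssim \delta$ guaranteed by Lemma \ref{lemma:macroscopic_control}. The temperature bound is obtained similarly by expanding $|v-U_k|^2$ and using the two previous estimates. By the linearity of the convex combinations in \eqref{vartheta_def}, these bounds transfer uniformly in $\vartheta \in [0,1]$ to $(\rho_{k,\vartheta}, U_{k,\vartheta}, T_{k,\vartheta})$.

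For \eqref{Q_i_difference}, Lemma \ref{lemma:Q_property} presents $Q_i = P_i/R_i$ with $P_i$ polynomial in $(\rho_\vartheta, U_\vartheta, T_\vartheta)$ and $R_i$ a monomial in $(\rho_\vartheta, T_\vartheta)$ bounded below by a positive constant thanks to \eqref{macro_theta_control}. Hence $(\rho, U, T) \mapsto Q_i$ is smooth with bounded derivatives on the neighborhood $\{|\rho-1| + |U| + |T-1| \leq C\delta\}$, and the mean value theorem along the segment joining $(\rho_{1,\vartheta}, U_{1,\vartheta}, T_{1,\vartheta})$ and $(\rho_{2,\vartheta}, U_{2,\vartheta}, T_{2,\vartheta})$ yields
\begin{equation*}
|Q_i(\rho_{1,\vartheta}, U_{1,\vartheta}, T_{1,\vartheta}) - Q_i(\rho_{2,\vartheta}, U_{2,\vartheta}, T_{2,\vartheta})| \lesssim |\rho_{1,\vartheta} - \rho_{2,\vartheta}| + |U_{1,\vartheta} - U_{2,\vartheta}| + |T_{1,\vartheta} - T_{2,\vartheta}|,
\end{equation*}
from which \eqref{Q_i_difference} follows upon integrating in $\vartheta$ and applying the base stability bound.

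For \eqref{Q_ij_difference} the same strategy applies, but we must handle the Maxwellian factor in $\mathcal{Q}_{ij} = (P_{ij}/R_{ij})M(\vartheta)$ carefully. I would decompose the difference into three telescopic pieces, one each for $P_{ij}$, $R_{ij}^{-1}$, and $M(\vartheta)$; the first two are treated exactly as in the $Q_i$ case. The key remaining piece is the difference of Maxwellians, which by the mean value theorem produces $\partial_{(\rho, U, T)} M(\vartheta, *) \cdot \Delta(\rho, U, T)$ at some intermediate state $*$. Each such derivative multiplies $M(\vartheta, *)$ by a polynomial in $(v-U_{\vartheta,*})$, so after the weight $e^{\theta|v|^2}\mu^{-1/2}$ one invokes the identical completion-of-square argument from the proof of \eqref{Q_ij_control}: since $\theta < 1/4$, $\delta \ll 1$, and $|U_{\vartheta,*}| \lesssim \delta$, the Gaussian $M(\vartheta, *)$ decays faster than $e^{-|v|^2/2(1+o(1))}$, absorbing both the polynomial growth and the weight $e^{\theta|v|^2}\mu^{-1/2} = e^{(\theta + 1/4)|v|^2}$.

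The main obstacle is the last step: differentiating $\exp(-|v-U|^2/(2T))$ in $T$ produces $\frac{|v-U|^2}{2T^2}\exp(-|v-U|^2/(2T))$, whose polynomial growth in $v$ could a priori defeat the weighted $L^\infty$ bound. However, no new mechanism beyond Lemma \ref{lemma:macroscopic_control} is required: the exponent $-|v-U_{\vartheta,*}|^2/(2T_{\vartheta,*})$, for parameters close to $(0,1)$, retains enough slack to absorb the polynomial and the weight simultaneously. Once this uniform-in-$v$ control is secured, integrating in $\vartheta$ and applying the base stability bound closes \eqref{Q_ij_difference}.
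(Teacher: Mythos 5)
Your proposal is correct and follows the same overall architecture as the paper: establish that $|\rho_1-\rho_2|$, $|U_1-U_2|$, $|T_1-T_2|$ are all controlled by $\Vert w(f_1-f_2)\Vert_{L^\infty_{x,v}}$, transfer this to the $\vartheta$-interpolated quantities through the definitions in \eqref{vartheta_def}, and then exploit the rational/polynomial structure of $Q_i$ and $\mathcal{Q}_{ij}$ from Lemma \ref{lemma:Q_property}. Where you depart slightly is in mechanism: the paper bounds power differences term by term via explicit algebraic identities $|\rho_1^m-\rho_2^m|\lesssim |\rho_1-\rho_2|$, etc.\ (the estimates \eqref{difference_integer}--\eqref{difference_negative} and \eqref{power_m_subtract}), while you invoke the mean value theorem on $Q_i$ and on $M(\vartheta)$ as smooth functions of $(\rho,U,T)$ near $(1,0,1)$. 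These are interchangeable; the paper's route is more elementary, yours is slightly shorter. You are also more explicit than the paper about the Maxwellian-difference piece and how the weight $e^{\theta|v|^2}\mu^{-1/2}$ is absorbed by the Gaussian with slack, whereas the paper compresses that into ``a rather tedious but straightforward computation.''

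One small imprecision worth flagging: you state that the $P_{ij}$ and $R_{ij}^{-1}$ telescopic pieces are ``treated exactly as in the $Q_i$ case,'' with the Maxwellian piece singled out as the only place requiring the completion-of-square argument. In fact, the $P_{ij}$ polynomial contains factors $(v_k-U_{\vartheta,k})^{m}$ (see \eqref{Q_ij_property}), so the difference $P_{ij}(\rho_{1,\vartheta},v-U_{1,\vartheta},\ldots)-P_{ij}(\rho_{2,\vartheta},v-U_{2,\vartheta},\ldots)$ also produces terms with polynomial growth in $v$, and these must likewise be absorbed by the Gaussian $M(\vartheta)$ sitting alongside in the telescopic sum. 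This does not require any new idea—the same completion-of-square and slack in $\theta<\frac14$ you already invoke for the Maxwellian piece covers it—but the description ``exactly as in the $Q_i$ case'' is not accurate, since $Q_i$ has no $v$-dependence at all. The argument goes through; just keep the $M(\vartheta)$ factor attached to each telescopic piece when absorbing the $v$-growth.
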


\begin{proof}
From \eqref{macro_control} we have
\begin{align}
    &  \Vert \rho_k-1,U_k,T_k-1\Vert_{L^\infty_{x}} \lesssim \delta, \ k=1,2. \label{macro_k_control}
\end{align}
We compute the difference of the macroscopic quantities $\rho,U,T$ as
\begin{align*}
    & |\rho_1-\rho_2| = \Big|\int_{\mathbb{R}^3} (F_1-F_2) \dd v  \Big| = \Big|\int_{\mathbb{R}^3} (f_1-f_2) \sqrt{\mu} \dd v \Big| \lesssim \Vert w(f_1-f_2)\Vert_{L^\infty_{x,v}}, \\
    &  \\
    & |U_1-U_2| = \Big| \frac{1}{\rho_1} \int_{\mathbb{R}^3} F_1 v \dd v - \frac{1}{\rho_2} \int_{\mathbb{R}^3} F_2 v \dd v  \Big|   \\
    &= \Big|\frac{1}{\rho_1} \int_{\mathbb{R}^3} (F_1-F_2)v \dd v + \int_{\mathbb{R}^3} F_2 v \dd v \Big(\frac{1}{\rho_1}-\frac{1}{\rho_2} \Big)   \Big| \\ 
    & =  \Big|\frac{1}{\rho_1} \int_{\mathbb{R}^3} (f_1-f_2)v\sqrt{\mu} \dd v + \int_{\mathbb{R}^3} f_2 v\sqrt{\mu} \dd v \Big(\frac{1}{\rho_1}-\frac{1}{\rho_2} \Big)   \Big|   \\
    & \lesssim \Vert w(f_1-f_2)\Vert_{L^\infty_{x,v}} + \Vert wf_2\Vert_{L^\infty_{x,v}} \frac{|\rho_1-\rho_2|}{\rho_1\rho_2} \lesssim \Vert w(f_1-f_2)\Vert_{L^\infty_{x,v}}, \\
    &  \\
    & |T_1-T_2| = \frac{1}{3} \Big|\frac{1}{\rho_1} \int_{\mathbb{R}^3} F_1 |v-U_1|^2  \dd v - \frac{1}{\rho_2} \int_{\mathbb{R}^3} F_2 |v-U_2|^2 \dd v  \Big| \\
    & = \frac{1}{3} \Big|\frac{1}{\rho_1} \int_{\mathbb{R}^3} F_1|v-U_1|^2 - F_2 |v-U_2|^2 \dd v + \int_{\mathbb{R}^3} F_2|v-U_2|^2 \dd v \Big(\frac{1}{\rho_1}-\frac{1}{\rho_2} \Big)    \Big| \\
    & \lesssim \int_{\mathbb{R}^3} |F_1-F_2||v-U_1|^2 \dd v + \int_{\mathbb{R}^3} F_2 \Big| |v-U_1|^2 - |v-U_2|^2 \Big| \dd v + |\rho_1-\rho_2| \int_{\mathbb{R}^3} f_2|v-U_2|^2\sqrt{\mu} \dd v \\
    & \lesssim \int_{\mathbb{R}^3} |f_1-f_2||v-U_1|^2 \sqrt{\mu} \dd v + \int_{\mathbb{R}^3} f_2 [(|U_1|^2 - |U_2|^2) + (U_1-U_2)\cdot v ] \sqrt{\mu} \dd v + \Vert w(f_1-f_2)\Vert_{L^\infty_{x,v}} \\
    & \lesssim \Vert w(f_1-f_2)\Vert_{L^\infty_{x,v}} + |U_1-U_2| \lesssim \Vert w(f_1-f_2)\Vert_{L^\infty_{x,v}}.
\end{align*}

These estimates imply the following control: for integer power $m\geq 1$, due to \eqref{macro_k_control}, we have
\begin{align}
    &   |(\rho_1^m-\rho_2^m, U_1^m - U_2^m, T_1^m-T_2^m)|\notag \\
    &\lesssim |(\rho_1-\rho_2,U_1-U_2,T_1-T_2)| |\rho_1^{m-1} + \rho_2^{m-1} + U_1^{m-1} + U_2^{m-1} + T_1^{m-1}+ T_2^{m-1}| \notag\\
    &\lesssim |(\rho_1-\rho_2,U_1-U_2,T_1-T_2)| \lesssim \Vert w(f_1-f_2)\Vert_{L^\infty_{x,v}}. \label{difference_integer}
\end{align}

For positive power $m>0$, 
\begin{align}
    &    |(\rho_1^m-\rho_2^m,T^m_1-T^m_2)|  \lesssim |\rho_1-\rho_2||\rho_1^{m-1}+\rho_2^{m-1}+T_1^{m-1} + T_2^{m-1}| \notag \\
    & \lesssim |(\rho_1^m-\rho_2^m,T^m_1-T^m_2)| \lesssim \Vert w(f_1-f_2)\Vert_{L^\infty_{x,v}}.   \label{difference_positive}
\end{align}
Here $\rho_k^{m-1},T_k^{m-1}\lesssim 1$ for finite $m$ due to \eqref{macro_k_control}.

For positive power $m>0$, again due to \eqref{macro_k_control},
\begin{align}
    & \Big|\Big(\frac{1}{\rho_1^m}-\frac{1}{\rho_2^m}, \frac{1}{T_1^m}-\frac{1}{T_2^m}\Big) \Big| = \Big|\Big(\frac{\rho_1^m - \rho_2^m}{\rho_1^m \rho_2^m}, \frac{T_1^m - T_2^m}{T_1^m T_2^m}  \Big) \Big| \notag\\
    & \lesssim |(\rho_1^m-\rho_2^m, T_1^m-T_2^m)| \lesssim \Vert w(f_1-f_2)\Vert_{L^\infty_{x,v}}.    \label{difference_negative}
\end{align}

We compute the difference of $\rho_{k,\vartheta},U_{k,\vartheta},T_{k,\vartheta}$ using the definition in \eqref{vartheta_def} and the computation \eqref{difference_integer}, \eqref{difference_positive}, \eqref{difference_negative}:
\begin{align*}
    & |\rho_{1,\vartheta}-\rho_{2,\vartheta},U_{1,\vartheta}-U_{2,\vartheta}| \\
    & = \Big|\vartheta(\rho_1-\rho_2), \frac{\vartheta(\rho_1U_1-\rho_2U_2) - \vartheta U_{2,\vartheta}(\rho_{1}-\rho_{2})}{\rho_{1,\vartheta}}\Big| \lesssim \Vert w(f_1-f_2)\Vert_{L^\infty_{x,v}}, \\
    & |T_{1,\vartheta} - T_{2,\vartheta}| = \frac{2}{3\rho_{1,\vartheta}}\Big|-\frac{\rho_{1,\vartheta}|U_{1,\vartheta}|^2 - \rho_{2,\vartheta}|U_{2,\vartheta}|^2}{2} -\frac{3}{2}T_{2,\vartheta}(\rho_{1,\vartheta}-\rho_{2,\vartheta}) \\
    & + \vartheta \Big\{ \frac{\rho_1 |U_1|^2 + 3\rho_1 T_1}{2} - \frac{3}{2}\rho_1 - \frac{\rho_2 |U_2|^2 + 3\rho_2 T_2}{2} + \frac{3}{2}\rho_2 \Big\}  \Big| \lesssim \Vert w(f_1-f_2)\Vert_{L^\infty_{x,v}}.
\end{align*}

It is straightforward to verify that we can achieve the same estimate for $\rho_{k,\vartheta},U_{k,\vartheta},T_{k,\vartheta}$ as \eqref{difference_integer}, \eqref{difference_positive}, \eqref{difference_negative}:
\begin{align}
    &  |(\rho_{1,\vartheta}^m-\rho_{2,\vartheta}^m, U_{1,\vartheta}^m - U_{2,\vartheta}^m, T_{1,\vartheta}^m-T_{2,\vartheta}^m)| \lesssim \Vert w(f_1-f_2)\Vert_{L^\infty_{x,v}}, \ m \text{ is a positive integer.} \notag \\
    & |(\rho_{1,\vartheta}^m-\rho_{2,\vartheta}^m,T^m_{1,\vartheta}-T^m_{2,\vartheta})| \lesssim \Vert w(f_1-f_2)\Vert_{L^\infty_{x,v}}, \ m \text{ is a positive constant.}   \label{power_m_subtract} \\
    & \Big|\Big(\frac{1}{\rho_{1,\vartheta}^m}-\frac{1}{\rho_{2,\vartheta}^m}, \frac{1}{T_{1,\vartheta}^m}-\frac{1}{T_{2,\vartheta}^m}\Big) \Big| \lesssim \Vert w(f_1-f_2)\Vert_{L^\infty_{x,v}}, \ m \text{ is a positive constant.} \notag
\end{align}

From \eqref{Q_i_property} and \eqref{Q_ij_property}, the denominator of $Q_i$ and $\mathcal{Q}_{ij}$ contain monomial of $\rho_\vartheta, T_\vartheta$, while the numerator contain polynomial of $\rho_\vartheta, U_\vartheta, T_\vartheta,v-U_\vartheta$ with integer powers. Then we can apply the computation \eqref{power_m_subtract} for the subtraction in \eqref{Q_i_difference} and \eqref{Q_ij_difference}. Then the lemma follows by a rather tedious but straightforward computation.

\end{proof}

\begin{lemma}\label{lemma:nonlinear_substraction}
Given $f_1$ and $f_2$ such that $\Vert e^{\lambda t}wf_1\Vert_{L^\infty_{x,v}} + \Vert e^{\lambda t}wf_2\Vert_{L^\infty_{x,v}}\lesssim\delta$, it holds that
\begin{align}
    &  \Vert e^{2\lambda t}w(\Gamma(f_1)-\Gamma(f_2))\Vert_{L^\infty_{x,v}} \lesssim \delta \Vert e^{\lambda t}w(f_1-f_2) \Vert_{L^\infty_{x,v}}. \label{gamma_diff}
\end{align}

\end{lemma}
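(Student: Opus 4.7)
The plan is to write $\Gamma(f_1)-\Gamma(f_2) = \sum_{k=1}^{4}(\Gamma_k(f_1)-\Gamma_k(f_2))$ following the splitting in \eqref{nonlinear_op}, and to telescope each piece so that every resulting term contains either a linear functional of the difference $f_1-f_2$ (for instance $\mathbf{P}(f_1-f_2)$, $\langle f_1-f_2,\chi_i\rangle$, or $f_1-f_2$ itself), or a difference of the nonlinear coefficients $Q_i$ or $\mathcal{Q}_{ij}$ evaluated at the macroscopic quantities of $F_1$ versus those of $F_2$. The latter differences are already bounded by $\Vert w(f_1-f_2)\Vert_{L^\infty_{x,v}}$ thanks to Lemma \ref{lemma:Q_difference}, while every remaining factor in each telescoping summand is bounded either by an absolute constant (via \eqref{Q_i_control}, \eqref{Q_ij_control}, and Lemma \ref{lemma:macroscopic_control}) or by $\delta$ (via the hypothesis $\Vert wf_k\Vert_{L^\infty_{x,v}}\lesssim\delta$).

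To illustrate the mechanism on $\Gamma_1$, set $\nu_{p,k}:=\sum_i \langle f_k,\chi_i\rangle\int_0^1 Q_i(\rho_{k,\vartheta},U_{k,\vartheta},T_{k,\vartheta})\,\dd\vartheta$ for $k=1,2$ and first split
\begin{equation*}
\Gamma_1(f_1)-\Gamma_1(f_2) = (\nu_{p,1}-\nu_{p,2})\,\mathbf{P}f_1 + \nu_{p,2}\,\mathbf{P}(f_1-f_2),
\end{equation*}
and then split
\begin{equation*}
\nu_{p,1}-\nu_{p,2} = \sum_i \langle f_1-f_2,\chi_i\rangle \int_0^1 Q_i^{(1)}\,\dd\vartheta + \sum_i \langle f_2,\chi_i\rangle \int_0^1 (Q_i^{(1)}-Q_i^{(2)})\,\dd\vartheta.
\end{equation*}
Combining \eqref{Q_i_control} and \eqref{Q_i_difference} with the smallness hypothesis, each summand multiplied by $w$ is bounded by $\delta\Vert w(f_1-f_2)\Vert_{L^\infty_{x,v}}$. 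The piece $\Gamma_2$ is treated in exactly the same way with $\mathbf{P}f$ replaced by $f$. For $\Gamma_3$ the telescoping produces three pieces coming from the differences of $\langle f,\chi_i\rangle$, $\langle f,\chi_j\rangle$, and $\int_0^1 \mathcal{Q}_{ij}(1-\vartheta)\,\dd\vartheta$; the pointwise-in-$v$ weighted bounds \eqref{Q_ij_control} and \eqref{Q_ij_difference} absorb the $\mu^{-1/2}$ factor and yield the desired estimate. The cubic term $\Gamma_4$ is handled by combining the $\Gamma_1$ and $\Gamma_3$ telescopings: since it is cubic in $f$, it actually produces a factor of order $\delta^2\Vert w(f_1-f_2)\Vert_{L^\infty_{x,v}}$, which is better than required.

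Carrying $e^{\lambda t}$ through the estimate is painless: every term in the telescoping is at least quadratic in $(f_1,f_2,f_1-f_2)$, so we simply attach $e^{\lambda t}$ to one distinguished factor to form either $\Vert e^{\lambda t}wf_k\Vert_{L^\infty_{x,v}}$ or $\Vert e^{\lambda t}w(f_1-f_2)\Vert_{L^\infty_{x,v}}$, and the other factors remain bounded by $\delta$ or an absolute constant. The main obstacle is the bookkeeping in $\Gamma_3$ and $\Gamma_4$, which relies on the pointwise control of $\int_0^1(\mathcal{Q}_{ij}^{(1)}-\mathcal{Q}_{ij}^{(2)})(1-\vartheta)\,\dd\vartheta\cdot\mu^{-1/2}$ from Lemma \ref{lemma:Q_difference}; the subtlety there is that the local Maxwellian factor $M(\vartheta)$ hidden inside $\mathcal{Q}_{ij}$ depends nonlinearly on $f$ through both its centering $U_\vartheta$ and its temperature $T_\vartheta$, so one cannot simply factor out a common Maxwellian. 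Once this pointwise control is accepted, the rest is a direct but tedious expansion parallel to the proof of Lemma \ref{lemma:gamma_property}, with exactly one linear factor of $f$ swapped out for the difference $f_1-f_2$.
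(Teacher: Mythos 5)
Your proposal follows essentially the same route as the paper: the same term-by-term splitting of $\Gamma$ into $\Gamma_1,\dots,\Gamma_4$, the same telescoping of products so that one factor is swapped for the difference $f_1-f_2$, and the same reliance on Lemma \ref{lemma:Q_difference} together with the $L^\infty$ controls \eqref{Q_i_control} and \eqref{Q_ij_control}. The observation that $\Gamma_4$ produces an extra power of $\delta$ is a harmless refinement; otherwise the argument matches the paper's proof step for step.
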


\begin{proof}
We will derive the lemma by estimating every term in \eqref{nonlinear_op}.

We start with $\Gamma_1(f) = \nu_p(f) \mathbf{P}f_1$. From the definition of $\nu_p$ in \eqref{nu_p}, we compute that
\begin{align*}
    &     \nu_p(f_1) \mathbf{P}f_1 - \nu_p(f_2) \mathbf{P}f_2 = (\nu_p(f_1)-\nu_p(f_2))\mathbf{P}f_1 + \nu_p(f_2) \mathbf{P}(f_1-f_2).
\end{align*}
The second term is bounded using \eqref{nu_p} and \eqref{Q_i_control}:
\begin{align*}
    & e^{2\lambda t}|w\nu_p(f_2) \mathbf{P}(f_1-f_2)| \lesssim \Vert e^{\lambda t}w(f_1-f_2)\Vert_{L^\infty_{x,v}} \Vert e^{\lambda t}wf_2\Vert_{L^\infty_{x,v}} \lesssim \delta \Vert e^{\lambda t}w(f_1-f_2)\Vert_{L^\infty_{x,v}}
\end{align*}
For the first term, we use the property of $Q_i$ \eqref{Q_i_property} to have
\begin{align}
    &  e^{\lambda t}|\nu_p(f_1)-\nu_p(f_2)| \notag \\
    &\lesssim \sum_{i}\langle e^{\lambda t}|f_1-f_2|,\chi_i\rangle \int_0^1 |Q_i(f_1)| \dd \vartheta + \sum_i \langle e^{\lambda t}|f_2|,\chi_i\rangle \int_0^1 |Q_i(f_1)-Q_i(f_2)| \dd \vartheta \notag\\
    & \lesssim \Vert e^{\lambda t} w(f_1-f_2)\Vert_{L^\infty_{x,v}} + \Vert e^{\lambda t}wf_2\Vert_{L^\infty_{x,v}} \Vert w(f_1-f_2)\Vert_{L^\infty_{x,v}} \lesssim \Vert e^{\lambda t}w(f_1-f_2)\Vert_{L^\infty_{x,v}}. \label{nuf_1-nuf_2}
\end{align}
Here we have used \eqref{Q_i_control} and \eqref{Q_i_difference}. 

This leads to
\begin{align*}
    &  |e^{2\lambda t}w(\nu_p(f_1)-\nu_p(f_2))\mathbf{P}f_1| \lesssim \Vert e^{\lambda t}wf_1\Vert_{L^\infty_{x,v}} \Vert e^{\lambda t}w(f_1-f_2)\Vert_{L^\infty_{x,v}} \lesssim \delta \Vert e^{\lambda t}w(f_1-f_2)\Vert_{L^\infty_{x,v}}.
\end{align*}
Thus \eqref{gamma_diff} holds for $\Gamma_1$.

Next we estimate $\Gamma_2(f) = -\nu_p(f) f$. We compute that
\begin{align*}
    &  \nu_p(f_1)f_1 - \nu_p(f_2)f_2 = (\nu_p(f_1)-\nu_p(f_2))f_1 + \nu_p(f_2)(f_1-f_2).
\end{align*}
The second term is bounded using \eqref{nu_p} and \eqref{Q_i_control}:
\begin{align*}
    &  |e^{2\lambda t}w(f_1-f_2)\nu_p(f_2)| \lesssim \Vert e^{\lambda t}w(f_1-f_2)\Vert_{L^\infty_{x,v}} \Vert e^{\lambda t}wf_2\Vert_{L^\infty_{x,v}} \lesssim \delta \Vert e^{\lambda t}w(f_1-f_2)\Vert_{L^\infty_{x,v}}.
\end{align*}
For the first term, using the same computation as \eqref{nuf_1-nuf_2}, we obtain
\begin{align*}
    &   |e^{2\lambda t} w(\nu_p(f_1)-\nu_p(f_2))f_1|\lesssim  \Vert e^{\lambda t}w(f_1-f_2)\Vert_{L^\infty_{x,v}} \Vert e^{\lambda t}wf_1\Vert_{L^\infty_{x,v}} \lesssim \delta \Vert e^{\lambda t}w(f_1-f_2)\Vert_{L^\infty_{x,v}}.
\end{align*}
Thus \eqref{gamma_diff} holds for $\Gamma_2$.

Next we estimate $\Gamma_3(f) = \sum_{0\leq i,j\leq 4} \int_0^1 \mathcal{Q}_{ij}(f)(1-\vartheta)\dd \vartheta \langle f,\chi_i\rangle \langle f,\chi_j\rangle \mu^{-1/2}$. We compute that
\begin{align}
    &   e^{2\lambda t}|w\Gamma_3(f_1)- w\Gamma_3(f_2)| \leq w\sum_{0\leq i,j\leq 4}\int_0^1 [\mathcal{Q}_{ij}(f_1) - \mathcal{Q}_{ij}(f_2)](1-\vartheta) \dd \vartheta \langle e^{\lambda t} f_1,\chi_i\rangle \langle e^{\lambda t}f_1,\chi_j\rangle \mu^{-1/2} \notag\\
    & + w\sum_{0\leq i,j\leq 4}\int_0^1 |\mathcal{Q}_{ij}(f_2)|(1-\vartheta)\dd \vartheta [\langle e^{\lambda t}(f_1-f_2),\chi_i\rangle \langle e^{\lambda t} f_1,\chi_j\rangle + \langle e^{\lambda t} f_2,\chi_i\rangle \langle e^{\lambda t}(f_1-f_2),\chi_j\rangle      ] \mu^{1/2} \notag\\
    & \lesssim \Vert e^{\lambda t} wf_1\Vert_{L^\infty_{x,v}}^2 \Vert w(f_1-f_2)\Vert_{L^\infty_{x,v}} + \Vert e^{\lambda t}w(f_1-f_2)\Vert_{L^\infty_{x,v}} [\Vert e^{\lambda t} wf_1\Vert_{L^\infty_{x,v}} + \Vert e^{\lambda t} wf_2\Vert_{L^\infty_{x,v}}] \notag \\
    &\lesssim \delta \Vert e^{\lambda t} w(f_1-f_2)\Vert_{L^\infty_{x,v}}. \label{gamma_3_difference}
\end{align}
Here we have applied \eqref{Q_ij_control} and \eqref{Q_ij_difference}. Thus \eqref{gamma_diff} holds for $\Gamma_3$.

Last we estimate $\Gamma_4(f)=\nu_p(f)\sum_{0\leq i,j\leq 4} \int_0^1 \mathcal{Q}_{ij}(f)(1-\vartheta)\dd \vartheta \langle f,\chi_i\rangle \langle f,\chi_j\rangle \mu^{-1/2}$. We compute that
\begin{align*}
    &    e^{2\lambda t}|w\Gamma_4(f_1)-w\Gamma_4(f_2)| \leq e^{2\lambda t}|\nu_p(f_1)-\nu_p(f_2)| w \int_0^1 |\mathcal{Q}_{ij}(f_1)|(1-\vartheta)\dd \vartheta \langle f_1,\chi_i\rangle \langle f_1,\chi_j\rangle \mu^{-1/2} \\
    &  + |\nu_p(f_2)| e^{\lambda t}|w\Gamma_3(f_1) - w\Gamma_3(f_2)| \\
    & \lesssim \Vert e^{\lambda t} w(f_1-f_2)\Vert_{L^\infty_{x,v}} \Vert e^{\lambda t} wf_1\Vert_{L^\infty_{x,v}}^2 + \delta \Vert e^{\lambda t} wf_2\Vert_{L^\infty_{x,v}} \Vert e^{\lambda t} w(f_1-f_2)\Vert_{L^\infty_{x,v}} \lesssim \delta \Vert e^{\lambda t} w(f_1-f_2)\Vert_{L^\infty_{x,v}}.
\end{align*}
In the RHS of the first line, we have applied \eqref{nuf_1-nuf_2}, \eqref{Q_ij_control}. In the last line, we have applied \eqref{nu_p}, \eqref{Q_i_control}, and \eqref{gamma_3_difference}. 

We conclude the lemma.

\end{proof}

\ \\

\section{$L^2$ dissipation estimate}\label{sec:l2}

In this section, we consider the solution to the linearized BGK equation 
\begin{align}
   \p_t f + v\cdot \nabla_x f + \mathcal{L}f & =   g,    
   \label{linear_f}
\end{align}
with the source term $g=g(t,x,v)$ satisfying 
\begin{equation}\label{assumption_g}
\mathbf{P}g=0.   
\end{equation}
The boundary condition of $f$ is given by 
\begin{align}
    & f(t,x,v)|_{\gamma_-} = c_\mu\sqrt{\mu(v)} \int_{n(x)\cdot u>0} (n(x)\cdot u) f(t,x,u)\sqrt{\mu(u)} \dd u. \label{diffuse_f}
\end{align}
We denote
\begin{align*}
    & P_\gamma f := c_\mu\sqrt{\mu(v)}\int_{n(x)\cdot u>0} (n(x)\cdot u) f(t,x,u) \sqrt{\mu(u)}\dd u.
\end{align*}

We will prove the following $L^2$-dissipation result.

\begin{proposition}\label{prop:l2}
Let $\O$ be an arbitrary bounded and $C^1$ domain. There exists $0<\lambda\ll 1$ such that if the initial data $f_0$ and source data $g$ satisfy \eqref{assumption_g} and
\[\Vert f_0\Vert_{L^2_{x,v}}^2 + \int_0^t \Vert e^{\lambda s}g(s)\Vert_{L^2_{x,v}}^2\,\dd s < \infty,\]
then there exists a unique solution to the problem
\begin{align}\label{intial_value_problem}
\begin{cases}
    &\p_t f + v\cdot \nabla_x f + \mathcal{L}f =   g  , \\
    &   f(0,x,v) = f_0(x,v), \ f|_{\gamma_-} = P_\gamma f.
\end{cases}
\end{align}
Moreover, it holds that
\begin{equation}\label{l2_decay}
\Vert f(t)\Vert_{L^2_{x,v}}^2 \lesssim e^{-2\lambda t} \Big\{\Vert f_0\Vert_{L^2_{x,v}}^2 + \int_0^t  \Vert e^{\lambda s}g(s)\Vert_{L^2_{x,v}}^2 \,\dd s  \Big\},\quad\forall\,t\geq 0.
\end{equation}

\end{proposition}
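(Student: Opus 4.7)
The plan is to reduce the decay estimate to an a priori energy bound on the rescaled function $\tilde f := e^{\lambda t} f$, which satisfies $\p_t \tilde f + v\cdot\nabla_x \tilde f + \mathcal{L}\tilde f = \lambda \tilde f + \tilde g$ with $\tilde g := e^{\lambda t} g$, the same diffuse boundary condition \eqref{diffuse_f}, and $\mathbf{P}\tilde g = 0$. Existence and uniqueness for the linear problem \eqref{intial_value_problem} I would obtain by iterating on the boundary cycle: set $f^{k+1}|_{\gamma_-} := P_\gamma f^k$, solve the resulting pure inflow problem by the method of characteristics, and pass to the limit using that $P_\gamma$ is a contraction on the trace space (by Cauchy--Schwarz together with the normalization $\int_{n(x)\cdot v<0} c_\mu\mu|n(x)\cdot v|\,\dd v = 1$). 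The remainder of the argument focuses on a Lyapunov inequality for $\Vert \tilde f\Vert_{L^2_{x,v}}^2$.

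First I would derive the basic energy identity by pairing the rescaled equation with $\tilde f$ in $L^2_{x,v}$. Using the coercivity $\langle \mathcal{L}\tilde f,\tilde f\rangle = \Vert (\mathbf{I}-\mathbf{P})\tilde f\Vert_{L^2_{x,v}}^2$ from \eqref{linear_op}, Green's identity on the transport term, and the hypothesis $\mathbf{P}\tilde g = 0$ to replace $\langle \tilde g,\tilde f\rangle$ by $\langle \tilde g,(\mathbf{I}-\mathbf{P})\tilde f\rangle$, this gives
\[
\tfrac{1}{2}\tfrac{\dd}{\dd t}\Vert \tilde f\Vert^2 + \Vert (\mathbf{I}-\mathbf{P})\tilde f\Vert^2 + \tfrac{1}{2}\bigl(|\tilde f|_{\gamma_+}^2 - |P_\gamma \tilde f|_{\gamma_-}^2\bigr) = \lambda \Vert \tilde f\Vert^2 + \langle \tilde g,(\mathbf{I}-\mathbf{P})\tilde f\rangle,
\]
where $|h|_\gamma^2 := \int_\gamma |n(x)\cdot v||h|^2$. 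A sharp Cauchy--Schwarz against the probability measure $c_\mu\mu|n(x)\cdot v|\,\dd v$ shows the boundary bracket is nonnegative and in fact dominates the non-Maxwellian component $|(\mathbf{I}-P_\gamma)\tilde f|_{\gamma_+}^2$ with a positive gap.

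The hard part will be the macroscopic estimate, since the identity above only dissipates the microscopic component. To control $\mathbf{P}\tilde f = a\chi_0 + \sum_i b_i\chi_i + c\chi_4$ I plan to follow Guo's hypocoercive test-function method from \cite{G}: build velocity moments paired with solutions $\psi_a,\psi_b,\psi_c$ of elliptic problems on $\Omega$ whose sources are the macroscopic densities $a,b,c$, with boundary conditions chosen compatible with $P_\gamma$. Testing the equation for $\tilde f$ against these moments, combined with elliptic regularity and Poincar\'e--Wirtinger via the mass conservation $\int_\Omega \sqrt{\mu}\tilde f\,\dd v\,\dd x \equiv 0$ (that is, $\int_\Omega a\,\dd x = 0$, preserved by \eqref{diffuse_f}), should produce
\[
\Vert \mathbf{P}\tilde f\Vert^2 \lesssim \Vert (\mathbf{I}-\mathbf{P})\tilde f\Vert^2 + |(\mathbf{I}-P_\gamma)\tilde f|_{\gamma_+}^2 + \Vert \tilde g\Vert^2.
\]
The principal difficulty lies here: the boundary traces of $a,b,c$ generated by integration by parts must be fully absorbed by the microscopic boundary gain from the previous step, which requires careful use of the fact that the range of $P_\gamma$ consists solely of multiples of $\sqrt{\mu}$, together with a standard trace/averaging inequality.

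Finally, combining the basic energy identity with a small multiple of the macroscopic estimate and applying Cauchy--Schwarz on the source term yields a Lyapunov inequality
\[
\tfrac{\dd}{\dd t}\Vert \tilde f\Vert^2 + c_0 \Vert \tilde f\Vert^2 \leq C\Vert \tilde g\Vert^2 + 2\lambda\Vert \tilde f\Vert^2
\]
for some $c_0 > 0$ independent of $\lambda$. Choosing $\lambda$ small enough that $2\lambda < c_0/2$ absorbs the last term, after which Gronwall's inequality on $[0,t]$ delivers $\Vert \tilde f(t)\Vert^2 \lesssim \Vert f_0\Vert^2 + \int_0^t \Vert \tilde g(s)\Vert^2\,\dd s$, which upon unfolding $\tilde f = e^{\lambda t} f$ is exactly \eqref{l2_decay}.
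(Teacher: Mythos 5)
Your overall strategy mirrors the paper's proof of this proposition: rescale by $e^{\lambda t}$, obtain the basic energy identity by Green's formula (using the coercivity of $\mathcal{L}$, the diffuse-boundary identity $\int_{\p\O}\int_{\R^3}|\tilde f|^2 (n\cdot v)\,\dd v\,\dd S_x = |(\mathbf{I}-P_\gamma)\tilde f|_{2,+}^2$, and $\mathbf{P}\tilde g=0$), control the macroscopic part by Guo-type elliptic test functions, and then combine with a small interpolation parameter and a small $\lambda$. There is, however, one step that would fail as written: you state the macroscopic estimate as a \emph{pointwise-in-time} inequality
$\Vert \mathbf{P}\tilde f\Vert^2 \lesssim \Vert (\mathbf{I}-\mathbf{P})\tilde f\Vert^2 + |(\mathbf{I}-P_\gamma)\tilde f|_{\gamma_+}^2 + \Vert \tilde g\Vert^2$,
and then feed it into a Lyapunov differential inequality. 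The test-function method cannot produce an instantaneous bound of this form, because the test functions $\psi_a,\psi_b,\psi_c$ are built from the macroscopic densities of $\tilde f$ itself and therefore carry time dependence. Integrating the weak formulation against such moving test functions unavoidably produces a $\p_t\psi$ contribution plus endpoint terms $G(t)-G(0)$, where $G(t)=\int_{\O\times\R^3}\psi\,\tilde f(t)\,\dd x\,\dd v$ satisfies only $|G(t)|\lesssim\Vert\tilde f(t)\Vert^2$. The correct output of the method is the time-integrated inequality
$\int_0^t\Vert\mathbf{P}\tilde f\Vert^2 \dd s \lesssim G(t)-G(0) + \int_0^t\bigl[\Vert(\mathbf{I}-\mathbf{P})\tilde f\Vert^2 + |(\mathbf{I}-P_\gamma)\tilde f|_{2,+}^2 + \Vert\tilde g\Vert^2\bigr]\dd s$,
from which a pointwise Lyapunov inequality does not follow directly.

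The fix is to remain at the integrated level: multiply the integrated macroscopic estimate by a small $\e$, add to the time-integrated energy identity, absorb the contribution $\e\,(G(t)-G(0))$ into the left-hand side using $|G(t)|\lesssim\Vert\tilde f(t)\Vert^2$ (taking $\e$ small), and then choose $\lambda$ so small that the term $\lambda\int_0^t\Vert\tilde f\Vert^2\dd s$ from the rescaling is dominated by the full-rank dissipation $\e\int_0^t\Vert\tilde f\Vert^2\dd s$ now present on the left. This gives $\Vert\tilde f(t)\Vert^2 \lesssim \Vert f_0\Vert^2 + \int_0^t\Vert\tilde g\Vert^2\dd s$ directly, without Gronwall. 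Equivalently, you could introduce the modified functional $E(t) := \Vert\tilde f(t)\Vert^2 - \e\,G(t)$ and show that $E$ satisfies a genuine Lyapunov inequality; either way, the intermediate functional $G$ cannot be dispensed with.
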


To prove Proposition \ref{prop:l2}, we need to have the following $L^2$ dissipation estimate of the macroscopic quantities. Denote 
\begin{align*}
    & |f|_{2,+}^2 : = \int_{\p\O}\int_{n(x)\cdot v>0} |f(t,x,v)|^2 (n(x)\cdot v) \dd v \dd S_x, \ \dd S_x \text{ is the surface integral.}
\end{align*}
\begin{lemma}\label{lemma:L2}
Suppose $f$ solves the following equation,
\begin{equation}\label{linear_f_lambda}
\p_t f +v\cdot \nabla_x f + \mathcal{L}f  =  g,
\end{equation}
with boundary condition \eqref{diffuse_f}. Here $g$ does not need to satisfy the condition \eqref{assumption_g}. It holds that
\begin{align*}
   \int_0^t \Vert \mathbf{P}f(s)\Vert_{L^2_{x,v}}^2 \,\dd s & \lesssim G(t)-G(0) + \int_0^t \Vert (\mathbf{I}-\mathbf{P})f\Vert_{L^2_{x,v}}^2 \,\dd s + \int_0^t \Vert g(s)\Vert_{L^2_{x,v}}^2 \,\dd s\notag \\
   &  \ \ \ + \int_0^t  |(I-P_\gamma) f(s)|_{2,+}^2 \,\dd s,
\end{align*}
where $G(t)$ is a functional of $f(t,x,v)$ such that $|G(t)|\lesssim \Vert f(t)\Vert_{L^2_{x,v}}^2$ holds true for any $t\geq 0$.
\end{lemma}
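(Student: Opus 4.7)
The plan is to apply the macroscopic ``weak formulation'' estimate of Guo and Esposito--Guo--Kim--Marra to the linear BGK setting. The goal is to recover coercivity on $\mathbf{P}f = a\chi_0 + \mathbf{b}\cdot(\chi_1,\chi_2,\chi_3) + c\chi_4$ by testing \eqref{linear_f_lambda} against three carefully designed test functions whose $x$-dependence comes from auxiliary elliptic problems in $\Omega$.

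Concretely, I would first take $L^2_v$ moments of \eqref{linear_f_lambda} against $\chi_0,\chi_j,\chi_4$. Since $\mathcal{L}f=(\mathbf{I}-\mathbf{P})f$, the $\mathcal{L}$ contribution drops out of the moment equations, so $(a,\mathbf{b},c)$ satisfy a symmetric-hyperbolic-type system driven by $g$ and by higher moments of $(\mathbf{I}-\mathbf{P})f$. Next I would construct test functions $\psi_a,\psi_{\mathbf{b}},\psi_c$ of the form (velocity polynomial)$\cdot\sqrt{\mu(v)}\cdot\partial_{x_k}\phi(x)$, where $\phi_c$ solves $-\Delta\phi_c = c$ with Neumann data, $\phi_a$ solves a similar Poisson problem adjusted for mass compatibility, and $\phi_{\mathbf{b}}$ is built from a Stokes-like elliptic system producing $\Vert \mathbf{b}\Vert_{L^2_x}^2$. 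Elliptic regularity then yields $\Vert \phi\Vert_{H^2_x}\lesssim \Vert \mathbf{P}f\Vert_{L^2_{x,v}}$. Testing \eqref{linear_f_lambda} against each $\psi$ and integrating over $(0,t)\times\Omega\times\mathbb{R}^3$, the key contribution $\langle v\cdot\nabla_x f,\psi\rangle$ yields, after integration by parts and use of the moment equations to trade $\partial_{x_k}\partial_{x_\ell}\phi$ pairs for $\Vert a\Vert^2,\Vert \mathbf{b}\Vert^2,\Vert c\Vert^2$, the coercive term $\int_0^t(\Vert a\Vert_{L^2_x}^2+\Vert \mathbf{b}\Vert_{L^2_x}^2+\Vert c\Vert_{L^2_x}^2)\,\mathrm{d}s$; the $\partial_t f$ contribution integrates to $\langle f,\psi\rangle\big|_0^t$, producing the functional $G(t)-G(0)$ satisfying $|G(t)|\lesssim \Vert f(t)\Vert_{L^2_{x,v}}^2$ via elliptic regularity; and the $\mathcal{L}f,\,g$ terms pair against $\psi$ and are absorbed by Cauchy--Schwarz into $\int_0^t\Vert (\mathbf{I}-\mathbf{P})f\Vert_{L^2_{x,v}}^2\,\mathrm{d}s + \int_0^t\Vert g\Vert_{L^2_{x,v}}^2\,\mathrm{d}s$, at the cost of a small constant in front of the positive terms.

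The main obstacle is the boundary integral $\int_0^t\int_{\partial\Omega}\int_{\mathbb{R}^3}(n\cdot v) f\,\psi\,\mathrm{d}v\,\mathrm{d}S_x\,\mathrm{d}s$ generated when integrating $v\cdot\nabla_x f$ by parts. Decomposing $f|_{\gamma_+}=P_\gamma f + (1-P_\gamma)f$ and using the diffuse reflection condition \eqref{diffuse_f} to identify $f|_{\gamma_-}=P_\gamma f$, the $P_\gamma f$ contribution is a multiple of $\sqrt{\mu(v)}$ in $v$. By prescribing Neumann-type boundary conditions on $\phi_a,\phi_{\mathbf{b}},\phi_c$ so that $\int_{\mathbb{R}^3}(n(x)\cdot v)\psi\,\sqrt{\mu}\,\mathrm{d}v=0$ pointwise on $\partial\Omega$, the $P_\gamma f$ contribution vanishes and only the $(1-P_\gamma)f$ piece on $\gamma_+$ survives, yielding the $\int_0^t|(1-P_\gamma)f|_{2,+}^2\,\mathrm{d}s$ term via Cauchy--Schwarz. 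A compatibility (mean-zero) condition for the Neumann problem defining $\phi_a$ is required to make the construction well-posed; this is enforced by the vanishing of the mass flux at $\partial\Omega$ implied by \eqref{diffuse_f}. Finally, combining the three individual estimates for $a$, $\mathbf{b}$, $c$ with appropriately small weights so that the inter-modal cross terms are absorbed into the dominant positive terms closes the bound.
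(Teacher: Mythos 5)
Your proposal is on the same track as the paper's Appendix proof: both test the weak formulation \eqref{weak_formula_2} against functions of the form $(\text{velocity polynomial})\cdot\sqrt{\mu}\cdot\nabla_x\phi$, with $\phi$ solving an auxiliary elliptic problem driven by the macroscopic component, recover $\|\P f\|^2_{L^2}$ from the $v\cdot\nabla_x$ contribution via elliptic regularity, absorb $\mathcal{L}f$ and $g$ via Cauchy--Schwarz, bound $\partial_t f$ by the interface functional $G(t)$, and control the boundary integral by $|(1-P_\gamma)f|_{2,+}^2$. However, your prescription of boundary data for the auxiliary problems does not match what actually closes the argument. You propose Neumann data on all of $\phi_a,\phi_{\mathbf{b}},\phi_c$ in order to kill the $P_\gamma f$ boundary contribution uniformly. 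The paper instead uses Dirichlet data $\phi_c=0$ on $\partial\Omega$ for the $c$-estimate (and Dirichlet for $\phi_{\mathbf{b}}$), with Neumann only on $\phi_a$. This is not cosmetic. First, the $P_\gamma f$ term already vanishes for $\psi_c$ because of the velocity-moment identity $\int_{\R^3}(n\cdot v)^2(|v|^2-5)\mu\,\dd v=0$, and for $\psi_{\mathbf{b}}$ because the integrand is odd; no Neumann condition is needed for these and imposing one would be both superfluous and harmful. Second, Dirichlet data on $\phi_c$ is needed to make the boundary term $\int_{\partial\Omega}\bigl(\tfrac13(\mathbf{b}\cdot n)\Phi_c+\tfrac16(\Lambda((\mathbf{I}-\P)f)\cdot n)\Phi_c\bigr)\dd S_x$ vanish in the estimate of $J_1$ coming from the conservation of energy; with Neumann data this term survives and must be estimated. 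Third, a Neumann problem $-\Delta\phi_c=c$, $\nabla\phi_c\cdot n=0$ requires the compatibility condition $\int_\Omega c\,\dd x=0$, which is not available: $c=\langle f,\chi_4\rangle$ has no a priori mean-zero constraint, and in the setting of this lemma $g$ is allowed to carry a $\P g\neq 0$ component, so even $a$ need not be mean-zero. Your remark about mass-flux enforcing compatibility for $\phi_a$ addresses the right issue for the $a$-equation, but the same reasoning does not extend to $c$. In short, the overall scheme is correct, but the specific boundary-data assignments you give are not the ones that make the three boundary terms vanish (or well-posed), and a literal implementation of your version would stall on the Neumann compatibility condition for $\phi_c$ and on the unkilled boundary term in the $\Phi_c$ step.
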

The proof of the macroscopic dissipation estimate is standard. For completeness, we refer to the proof provided in the appendix.

\begin{proof}[\textbf{Proof of Proposition \ref{prop:l2}}]
We prove the decay estimate~\eqref{l2_decay}. Multiplying~\eqref{intial_value_problem} with $e^{\lambda t}$ we get
\begin{equation}\label{f_lambda_t}
[\p_t  + v\cdot \nabla_x + \mathcal{L}](e^{\lambda t}f) =    \lambda e^{\lambda t}f + e^{\lambda t}g. 
\end{equation}
Applying Green's identity to~\eqref{f_lambda_t}, we have
\begin{align}
&  \Vert e^{\lambda t}f(t)\Vert_{L^2_{x,v}}^2 + \int_0^t \Vert (\mathbf{I}-\mathbf{P})e^{\lambda s}f(s)\Vert_{L^2_{x,v}}^2  \dd s+ \int_0^t  |(I-P_\gamma)e^{\lambda s}f(s)|_{2,+}^2    \dd s \notag\\
& \lesssim  \lambda \int_0^t \Vert e^{\lambda s} f(s)\Vert_{L^2_{x,v}}^2 \dd s + \Vert f(0)\Vert_{L^2_{x,v}}^2 + \int_0^t  \Vert e^{\lambda s} g(s)\Vert_{L^2_{x,v}}^2 \dd s. \label{green_lambda}
\end{align}
Here, we have applied $\mathbf{P}(e^{\lambda t}g)=0$. We have also applied the following coercive property of the diffuse reflection boundary condition:
{\color{black}
\begin{align*}
    & \int_{\p \O}\int_{\mathbb{R}^3} |f(t,x,v)|^2 (n(x)\cdot v) \dd v \dd S_x \\
    &= \int_{\p\O}\int_{n(x)\cdot v>0}|(I-P_\gamma)f+P_\gamma f|^2(n(x)\cdot v) \dd v \dd S_x + \int_{\p\O}\int_{n(x)\cdot v<0} |P_\gamma f|^2 (n(x)\cdot v)\dd v \dd S_x\\
    & = \int_{\p\O} \int_{n(x)\cdot v>0} |(I-P_\gamma)f|^2 (n(x)\cdot v)\dd v  \dd S_x + 2 \int_{\p\O}\int_{n(x)\cdot v>0} fP_\gamma f (n(x)\cdot v)\dd v \dd S_x    \\
    & - 2\int_{\p\O}\int_{n(x)\cdot v>0} |P_{\gamma}f|^2 (n(x)\cdot v)\dd v \dd S_x + \int_{\p\O}\int_{\mathbb{R}^3} |P_{\gamma}f|^2 (n(x)\cdot v)\dd v \dd S_x   \\
    & = |(I-P_\gamma)f|_{2,+}^2 + \Big(\int_{\p\O}\int_{n(x)\cdot u>0}(n(x)\cdot u)f(u)\sqrt{\mu(u)}\dd u\dd S_x\Big)^2\\
    &\times [(2-2)\int_{n(x)\cdot v>0}c_\mu \mu(v)(n(x)\cdot v)\dd v + \int_{\mathbb{R}^3}c_\mu \mu(v)(n(x)\cdot v)\dd v]  \\
    & = |(I-P_\gamma)f|_{2,+}^2.
\end{align*}
}
Next, we apply Lemma \ref{lemma:L2} to~\eqref{f_lambda_t}, then we obtain
\begin{align}
  \int_0^t \Vert e^{\lambda s}\mathbf{P}f(s)\Vert_{L^2_{x,v}}^2 \dd s  &  \lesssim G(t)-G(0) + \int_0^t \Vert e^{\lambda s}(\mathbf{I}-\mathbf{P})f(s)\Vert_{L^2_{x,v}}^2 \dd s + \int_0^t \Vert \lambda e^{\lambda s}f\Vert^2_{L^2_{x,v}} \dd s \notag \\
  & + \int_0^t \Vert e^{\lambda s} g(s)\Vert_{L^2_{x,v}}^2 \dd s + \int_0^t |e^{\lambda s}    (I-P_\gamma) f(s)|_{2,+}^2   \dd s,   \label{pf_lambda}
\end{align}
where $|G(t)|\lesssim \Vert e^{\lambda t} f(t)\Vert_{L^2_{x,v}}^2$. Therefore, multiplying \eqref{pf_lambda} by a small constant $\e$ and adding the resultant to~\eqref{green_lambda}, we obtain that for some $C>0$,
\begin{align*}
 & (1-C\e) \Vert e^{\lambda t}f(t)\Vert_{L^2_{x,v}}^2  +  \Big\{(1-C\e)\int_0^t \Vert e^{\lambda s}(\mathbf{I}-\mathbf{P})f(s)\Vert_{L^2_{x,v}}^2 \dd s + \e\int_0^t\Vert e^{\lambda s}\mathbf{P}f(s)\Vert_{L^2_{x,v}}^2  \dd s\Big\} \\
 &+ (1-C\e) \int_0^t |e^{\lambda s}    (I-P_\gamma) f(s)|_{2,+}^2   \dd s \\
 & \leq  C(\lambda+\lambda^2) \int_0^t \Vert e^{\lambda s} f(s)\Vert_{L^2_{x,v}}^2 \dd s   + C\Vert f(0)\Vert_{L^2_{x,v}}^2   +  \int_0^t \Vert e^{\lambda s} g(s)\Vert_{L^2_{x,v}}^2 \dd s  .
\end{align*}
Since $\Vert e^{\lambda s}(\mathbf{I}-\mathbf{P})f(s)\Vert_{L^2_{x,v}}^2 + \Vert e^{\lambda s}\mathbf{P}f(s)\Vert_{L^2_{x,v}}^2 = \Vert e^{\lambda s}f(s)\Vert_{L^2_{x,v}}^2$, we further obtain that for $\e \ll 1$:
\begin{align*}
 &  \Vert e^{\lambda t}f(t)\Vert_{L^2_{x,v}}^2  +  \e \int_0^t \Vert e^{\lambda s}f(s)\Vert_{L^2_{x,v}}^2 \dd s \\
 & \leq  C(\lambda+\lambda^2) \int_0^t \Vert e^{\lambda s} f(s)\Vert_{L^2_{x,v}}^2 \dd s   + C\Vert f(0)\Vert_{L^2_{x,v}}^2   +  \int_0^t \Vert e^{\lambda s} g(s)\Vert_{L^2_{x,v}}^2 \dd s  .
\end{align*}
Last we let $\lambda \ll 1$ be such that $C(\lambda+\lambda^2) \leq \e$, then the above estimate gives the desired decay estimate \eqref{l2_decay}. We conclude the proof of Proposition \ref{prop:l2}.
\end{proof}

\ \\

\section{$L^\infty$ estimate by method of characteristic}\label{sec:linfty}
In this section, we are devoted to the proof of Theorem \ref{thm:linfty}. We will control the nonlinear operator $\Gamma(f)$ in \eqref{f_eqn} using $L^\infty$ norm. For this purpose, we start with the $L^\infty$ estimate of the linear problem~\eqref{linear_f} in the following proposition.

\begin{proposition}\label{prop:linfty}
Suppose the initial condition and source term in \eqref{linear_f} satisfy
\begin{align}
    &\Vert wf_0\Vert_{L^\infty_{x,v}} < \infty, \    \sup_{0\leq s\leq t}e^{\lambda s}\Vert wg(s)\Vert_{L^\infty_{x,v}} < \infty, \ \mathbf{P}(g) = 0, \label{linfty_condition}
\end{align}
then there exists $C>0$ such that the unique solution in Proposition \ref{prop:l2} satisfies
\begin{align*}
 \Vert wf(t)\Vert_{L^\infty_{x,v}}  &   \leq  Ce^{-\lambda t} \Big\{\Vert wf_0\Vert_{L^\infty_{x,v}} + \sup_{0\leq s\leq t} e^{\lambda s}\Vert w g(s) \Vert_{L^\infty_{x,v}} \Big\},
\end{align*} 
for any $t\geq 0$.
\end{proposition}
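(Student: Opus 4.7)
The plan is to rewrite \eqref{linear_f} as a damped transport equation by exploiting $\mathcal{L}f=(\mathbf{I}-\mathbf{P})f=f-\mathbf{P}f$. Setting $h(t,x,v):=e^{\lambda t}w(v)f(t,x,v)$ and using that $w$ depends only on $v$ gives
\begin{equation*}
(\partial_t + v\cdot\nabla_x + (1-\lambda))h = Kh + e^{\lambda t}wg,
\end{equation*}
where $Kh(v):=\sum_{i=0}^{4}(w\chi_i)(v)\int_{\mathbb{R}^3}(\chi_i/w)(u)\,h(u)\,\mathrm{d}u$ is finite rank and uniformly bounded on $L^\infty_v$ (since $\theta<\tfrac14$ gives $w\chi_i\in L^\infty_v$ and $\chi_i/w\in L^1_v$), and the diffuse condition becomes
\begin{equation*}
h(t,x,v)|_{\gamma_-}=(w\sqrt\mu)(v)\int_{n(x)\cdot u>0}\frac{h(t,x,u)}{w(u)}\sqrt{\mu(u)}\,(n(x)\cdot u)\,\mathrm{d}u.
\end{equation*}
Proving $\sup_{s\leq t}\|h(s)\|_{L^\infty_{x,v}}+|h(t)|_{L^\infty_\gamma}\lesssim\|wf_0\|_{L^\infty}+\sup_{s\leq t}\|e^{\lambda s}wg(s)\|_{L^\infty}$ will then yield the proposition after stripping the $e^{\lambda t}$.

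Next I would apply Duhamel along backward characteristics. Letting $t_{\mathbf{b}}(t,x,v)$ be the backward exit time and $x_{\mathbf{b}}=x-t_{\mathbf{b}}v$, the representation reads
\begin{equation*}
h(t,x,v)=\mathbf{1}_{t\leq t_{\mathbf{b}}}e^{-(1-\lambda)t}h(0,x-tv,v)+\mathbf{1}_{t>t_{\mathbf{b}}}e^{-(1-\lambda)t_{\mathbf{b}}}h(t-t_{\mathbf{b}},x_{\mathbf{b}},v)+\int_{\max\{0,t-t_{\mathbf{b}}\}}^{t}e^{-(1-\lambda)(t-s)}\bigl[Kh+e^{\lambda s}wg\bigr]\,\mathrm{d}s,
\end{equation*}
where the integrand is evaluated along the characteristic. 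On the boundary the diffuse formula turns $h(t-t_{\mathbf{b}},x_{\mathbf{b}},v)$ into a velocity-average against the probability measure $c_\mu\sqrt\mu(u)(n(x_{\mathbf{b}})\cdot u)\,\mathrm{d}u$. Iterating this representation $k$ times constructs the ``stochastic cycles'' of Guo \cite{G}: the contribution of trajectories which fail to complete their cycles within a fixed time $T_0$ is bounded by $(1/2)^{k}\|h\|_{L^\infty}$ (uniformly in $(t,x,v)$ up to a set of measure zero), so choosing $k=k(\Omega,T_0)$ sufficiently large absorbs the trace contribution into the left-hand side.

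To control the $Kh$ contribution (which is not itself a contraction), I would substitute the Duhamel representation into $Kh$ one more time (the classical double-Duhamel step) and split velocities into $|v|,|u|\leq N$ and its complement. The large-velocity piece is handled by the Gaussian decay of $w\chi_i$ and $w\sqrt\mu$, giving a factor small in $N$. On the bounded region, away from a small grazing set, the change of variables $s\mapsto y=x-(t-s)v$ converts the time integral against $Kh$ into an integral controlled by $\|h(s)\|_{L^2_{x,v}}$, which by Proposition \ref{prop:l2} (applicable since $\mathbf{P}g=0$) is dominated by $\|f_0\|_{L^2}+\sup_{s}\|e^{\lambda s}g(s)\|_{L^2}\lesssim\|wf_0\|_{L^\infty}+\sup_{s}\|e^{\lambda s}wg(s)\|_{L^\infty}$. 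Combining the three pieces closes the $L^\infty$ bound; the trace estimate $|wf(t)|_{L^\infty_\gamma}$ then follows directly from the boundary representation once the interior bound is known. The main obstacle I anticipate is the geometric control of nearly-grazing trajectories in the iterated boundary formula: one must excise a phase-space set of small but positive measure where $t_{\mathbf{b}}$ is tiny or consecutive bounces pile up, and verify that this exclusion is compatible both with the contraction factor $(1/2)^k$ from stochastic cycles and with the change of variables underlying the $L^2$--$L^\infty$ absorption, which is where the $C^1$ regularity of $\partial\Omega$ enters essentially.
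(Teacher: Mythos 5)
Your strategy is the same as the paper's: Duhamel along backward characteristics with the damping factor extracted from $\mathcal{L}=\mathbf{I}-\mathbf{P}$, stochastic cycles to treat the diffuse boundary (Definition \ref{definition: sto cycle}, Lemma \ref{lemma:tk}, Lemma \ref{lemma:bdr}), a second Duhamel expansion inside the finite-rank term, a $\{|v|,|u|\leq N\}$ cutoff, and a change of variables that passes the remaining piece to $L^2_{x,v}$ and hence to Proposition \ref{prop:l2}. Two technical points you should tighten. First, the change of variables is in the \emph{velocity} variable, $u\mapsto y'=y-(s-s')u$ (with Jacobian $(s-s')^3\geq\delta^3$ after restricting to $s-s'\geq\delta$), not $s\mapsto y$ as you wrote; the time split $\mathbf{1}_{s-s'<\delta}$ producing an $o(1)\sup\|h\|_{L^\infty}$ remainder is exactly what makes this Jacobian nondegenerate. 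Second, your plan to prove $\sup_{s\leq t}\|h(s)\|_{L^\infty}\lesssim\|wf_0\|_{L^\infty}+\sup_{s\leq t}\|e^{\lambda s}wg(s)\|_{L^\infty}$ in a single stroke for all $t$ is not quite compatible with Lemma \ref{lemma:tk}: the contraction $(1/2)^{C_2T_0^{5/4}}$ is available only over a fixed window of length $T_0$, since the cycle count $k=C_1T_0^{5/4}$ must be tied to $T_0$. The paper therefore first proves the estimate at $t=T_0$ (equation \eqref{est_T0}) and for $t\in[0,T_0]$ (equation \eqref{est_t}), then bootstraps over blocks $[(m-1)T_0,mT_0]$ (equation \eqref{mT0}) to reach all $t$; you need this iteration, or an equivalent device, to close the argument. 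Finally, your worry about excising a grazing set is largely unnecessary here: because the diffuse boundary resamples outgoing velocities from the probability measure $c_\mu\mu(v)(n\cdot v)\,\mathrm{d}v$, near-grazing contributions are automatically suppressed, which is precisely what Guo's Lemma 23 (our Lemma \ref{lemma:tk}) encodes without any phase-space surgery beyond the time cutoff $s<t_i-\delta$.
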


We will derive the $L^\infty$ control using the method of characteristics. We use standard notations for the backward exit time and backward exit position:
\begin{equation*}
\tb(x,v) :    = \sup\{s\geq 0, x-sv \in \O\} , \ \xb(x,v)  :  = x - \tb(x,v)v.
\end{equation*}

We denote $t_0=T_0$, a fixed starting time. Since the backward trajectory may have multiple interactions with the boundary, we define the following stochastic cycle:
\begin{definition}\label{definition: sto cycle}
We define a stochastic cycles as $(x_0,v_0)= (x,v) \in \bar{\O} \times \R^3$ and inductively
\begin{align}
&x_1:= \xb(x,v), \   v_1 \in \{v_1\in \mathbb{R}^3:n(x_1)\cdot v_1>0\} , \notag\\
&  v_{k}\in \mathcal{V}_k:= \{v_{k}\in \mathbb{R}^3:n(x_k)\cdot v_k>0\}, \ \ \text{for} \  k \geq 1,
\label{xi}
\\
 &x_{k+1} := \xb(x_k, v_k) , \ \tb^{k}:= \tb(x_k,v_k) \ \ \text{for} \  n(x_k) \cdot v_k\geq 0  , \notag \\
 &t_{k} =  t_0-  \{ \tb + \tb^1 + \cdots + \tb^{k-1}\},  \ \ \text{for} \  k \geq 1. \notag
\end{align}

\end{definition}

With the stochastic cycles defined, suppose $f$ satisfies the linear equation \eqref{linear_f}, with $\mathcal{L}f$ defined in \eqref{linear_op} in Lemma \ref{lemma:operator_property}, we apply the method of characteristic to get
\begin{align}
  &w(v) f(T_0,x,v)  \notag \\
  & =  \mathbf{1}_{t_1\leq 0} w(v) e^{-  T_0} f(0,x-tv, v) \label{initial}\\
  & + \mathbf{1}_{t_1 \leq 0} \int_0^{T_0} e^{- (T_0-s)}  w(v) \sum_{i=0}^4 \chi_i(v) \int_{\mathbb{R}^3}   f(s,x-(t-s)v,u) \chi_i(u) \dd u \dd s \label{K_0}\\
  & +\mathbf{1}_{t_1>0} \int^{T_0}_{t_1} e^{- (T_0 - s)}  w(v) \sum_{i=0}^4 \chi_i(v) \int_{\mathbb{R}^3}  f(s,x-(t-s)v,u) \chi_i(u) \dd u \dd s  \label{K_1} \\
  &  + \mathbf{1}_{t_1 \leq 0} \int_0^{T_0} e^{-(T_0-s)}  w(v)  g(s,x-(t-s)v,v) \dd s  \label{g_0} \\
  &  +\mathbf{1}_{t_1>0} \int^{T_0}_{t_1} e^{- (T_0 - s)} w(v) g(s,x-(t-s)v,v) \dd s   \label{g_1} \\
  & + \mathbf{1}_{t_1>0} e^{- (T_0 - t_1)} w(v)f(t_1,x_1,v). \label{f_bdr}
\end{align}
The boundary term \eqref{f_bdr} is represented using the diffuse reflection boundary condition \eqref{f_eqn} and the stochastic cycle Definition \ref{definition: sto cycle}:
\begin{align*}
    & f(t_1,x_1,v) = c_\mu\sqrt{ \mu(v)} \int_{n(x_1)\cdot v_1>0} f(t_1,x_1,v_1)\sqrt{\mu(v_1)}|n(x_1)\cdot v_1|\dd v_1.
\end{align*}
Applying the method of characteristic again to $f(t_1,x_1,v_1)$, with the stochastic cycle defined in Definition \ref{definition: sto cycle}, it is standard to derive the following bound for the boundary term \eqref{f_bdr}:
\begin{align}
   & |\eqref{f_bdr}|\leq   e^{- (T_0-t_1)} w(v)  \notag\\
   & \times \int_{\prod_{j=1}^{k-1}\mathcal{V}_j} \bigg\{ \sum_{i=1}^{k-1}\mathbf{1}_{t_{i+1}\leq 0 < t_i} e^{- t_i} w(v_i)|f(0, x_i - t_i v_i, v_i)| \dd \Sigma_i     \label{bdr_initial}\\
  & + \mathbf{1}_{t_{k}>0}  w(v_{k-1})|f(t_{k},x_k,v_{k-1})| \dd \Sigma_{k-1} \label{bdr_tk}\\
  & + \sum_{i=1}^{k-1} \mathbf{1}_{t_{i+1}\leq 0 < t_i}  \int^{t_i}_0 e^{- (t_i-s)}  \sum_{\ell=0}^4 w(v_i) \chi_\ell(v_i) \int_{\mathbb{R}^3}  \chi_\ell(u) f(s,x_i-(t_i-s)v_i, u) \dd u \dd s \dd \Sigma_i   \label{bdr_K_0} \\
  & + \sum_{i=1}^{k-1} \mathbf{1}_{t_{i+1}>0} \int_{t_{i+1}}^{t_i} e^{- (t_i-s)} \sum_{\ell=0}^4 w(v_i)\chi_\ell(v_i)
  \int_{\mathbb{R}^3} \chi_\ell(u) f(s,x_i-(t_i-s)v_i, u) \dd u \dd s \dd \Sigma_i   \label{bdr_K_i} \\
  &  + \sum_{i=1}^{k-1} \mathbf{1}_{t_{i+1}\leq 0 < t_i}  \int^{t_i}_0 e^{- (t_i-s)} w(v_i) g(s,x_i-(t_i-s)v_i,v_i) \dd \Sigma_i     \label{bdr_g_0}   \\
  &  + \sum_{i=1}^{k-1} \mathbf{1}_{t_{i+1}>0} \int_{t_{i+1}}^{t_i} e^{- (t_i-s)} w(v_i) g(s,x_i-(t_i-s)v_i,v_i) \dd \Sigma_i  \bigg\}.  \label{bdr_g_i}  
\end{align}
Here $\dd \Sigma_i$ is defined as
\begin{equation}\label{Sigma}
\dd \Sigma_i = \Big\{\prod_{j=i+1}^{k-1} \dd \sigma_j \Big\}  \times  \Big\{  \frac{1}{w(v_i)\sqrt{\mu(v_i)}}   \dd \sigma_i \Big\} \times \Big\{\prod_{j=1}^{i-1} e^{-(t_j-t_{j+1})}  \dd \sigma_j \Big\} ,
\end{equation}
where $\dd \sigma_i$ is a probability measure in $\mathcal{V}_i$ \eqref{xi} given by
\begin{equation}\label{sigma_i}
\dd \sigma_i =  c_\mu\mu(v_i) (n(x_i)\cdot v_i)\dd v_i.
\end{equation}

The term \eqref{bdr_tk} corresponds to the scenario that the backward trajectory interacts with the diffuse boundary for more than $k$ times, while the other terms involve finite-time interaction. This uncontrolled interaction times is estimated by the following lemma.

\begin{lemma}\label{lemma:tk}
For $T_0>0$ sufficiently large, there exists constants $C_1,C_2>0$ independent of $T_0$ such that for $k = C_1 T_0^{5/4}$, and $(t_0,x_0,v_0) = (t,x,v)\in [0,T_0]\times \bar{\O}\times \mathbb{R}^3$,
\begin{equation*}
\int_{\prod_{j=1}^{k-1} \mathcal{V}_j}\mathbf{1}_{t_k>0}  \prod_{j=1}^{k-1} \dd \sigma_j \leq \Big( \frac{1}{2}\Big)^{C_2 T_0^{5/4}}.
\end{equation*}
\end{lemma}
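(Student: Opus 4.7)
The plan is to follow the standard good/bad velocity decomposition used by Guo and subsequent authors for diffuse boundary cycles. Fix a constant $N$ (independent of $T_0$) to be chosen large, and for each $j\geq 1$ split
\[
\mathcal{V}_j = \mathcal{V}_j^G \sqcup \mathcal{V}_j^B, \qquad \mathcal{V}_j^G := \{v_j\in \mathcal{V}_j : |v_j|\leq N \text{ and } n(x_j)\cdot v_j \geq 1/N\}.
\]
Decomposing $\mathcal{V}_j^B \subset \{|v_j|>N\} \cup \{0<n(x_j)\cdot v_j<1/N\}$, the Gaussian tail of $\mu$ bounds the large-velocity piece by $Ce^{-cN^2}$, while the grazing piece is bounded by $C/N$ since $|n(x_j)\cdot v_j|\leq 1/N$ there and $\int\mu\,dv<\infty$. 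Choosing $N$ large enough (but fixed, independent of $T_0$) yields $\int_{\mathcal{V}_j^B} d\sigma_j \leq \tfrac{1}{8}$ uniformly in $x_j\in\partial\Omega$.

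Next I would establish a lower bound on the exit time on the good set. By smoothness of $\partial\Omega$, a Taylor expansion of the boundary near $x_j$ yields the standard estimate $\tb(x_j,v_j) \gtrsim (n(x_j)\cdot v_j)/|v_j|^2$ for non-grazing velocities, which on $\mathcal{V}_j^G$ gives $\tb^j \geq \varepsilon := c_\Omega/N^3$, a positive constant independent of $T_0$. Therefore on the event $\{t_k>0\}$,
\[
T_0 \geq \sum_{j=0}^{k-1}\tb^j \geq \varepsilon\cdot\#\{1\leq j\leq k-1 : v_j \in \mathcal{V}_j^G\},
\]
so at most $T_0/\varepsilon$ of the indices are good and at least $(k-1)-T_0/\varepsilon$ are bad.

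Finally, the combinatorial estimate. Writing
\[
\mathbf{1}_{t_k>0} \leq \sum_{I\subset\{1,\ldots,k-1\},\,|I|\leq T_0/\varepsilon}\;\prod_{j\in I}\mathbf{1}_{\mathcal{V}_j^G}(v_j)\prod_{j\notin I}\mathbf{1}_{\mathcal{V}_j^B}(v_j),
\]
and integrating iteratively using that $d\sigma_j$ is a probability measure on $\mathcal{V}_j$ with $\int_{\mathcal{V}_j^B}d\sigma_j\leq 1/8$ uniformly in $x_j$, one obtains
\[
\int_{\prod_j\mathcal{V}_j}\mathbf{1}_{t_k>0}\prod_{j=1}^{k-1}d\sigma_j \leq \sum_{m=0}^{\lfloor T_0/\varepsilon\rfloor}\binom{k-1}{m}\Big(\tfrac{1}{8}\Big)^{(k-1)-m}.
\]
Choosing $k=C_1 T_0^{5/4}$ with $C_1$ large enough that $T_0/\varepsilon \leq (k-1)/4$ for all sufficiently large $T_0$, the binomials are bounded by $2^{k-1}$ and
\[
\int\mathbf{1}_{t_k>0}\prod d\sigma_j \;\leq\; k\cdot 2^{k-1}\cdot 8^{-3(k-1)/4} \;=\; k\cdot 2^{-5(k-1)/4+1} \;\leq\; \Big(\tfrac{1}{2}\Big)^{C_2 T_0^{5/4}}
\]
for a suitable $C_2>0$ and $T_0$ sufficiently large, absorbing the polynomial prefactor into the exponential.

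The only delicate input is the uniform lower bound $\tb^j \geq \varepsilon$ on the non-grazing, bounded-velocity set, which genuinely uses the smoothness of $\partial\Omega$; everything else is routine measure accounting plus a binomial bound. The exponent $5/4$ plays no intrinsic role here — the same argument works for any $\alpha>1$ in $k=C_1T_0^\alpha$, the specific choice being tuned for later use in Section~\ref{sec:linfty}.
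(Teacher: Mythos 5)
Your proof is correct and is essentially a faithful reproduction of Guo's Lemma 23 (in ``Decay and Continuity of the Boltzmann Equation in Bounded Domains''), which is precisely the reference the paper cites in lieu of giving a proof; the good/bad velocity decomposition with threshold $N$, the curvature-based lower bound $\tb(x_j,v_j)\gtrsim (n(x_j)\cdot v_j)/|v_j|^2$ on the good set, the pigeonhole bound on the number of good indices, and the binomial tail estimate are all exactly the ingredients of that argument. Your closing remark that the exponent $5/4$ is not forced by the lemma but is a convenient choice for the later $L^\infty$ bootstrap is also accurate.
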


\begin{proof}
Since the characteristic in Definition \ref{definition: sto cycle} with repeated interaction with the boundary is fully determined by the diffuse reflection boundary condition, this statement is independent of the equation. We refer to the proof in Lemma 23 of \cite{G} for the Boltzmann equation.

\end{proof}

To prove Proposition \ref{prop:linfty}, among the characteristic formula~\eqref{initial} - \eqref{f_bdr}, first we estimate the boundary term \eqref{f_bdr} in the following lemma.

\begin{lemma}\label{lemma:bdr}
For the boundary term~\eqref{f_bdr}, with $\lambda$ given in Proposition \ref{prop:l2}, it holds that
\begin{equation*}
\begin{split}
 w(v)|f(t_1,x_1,v)| \leq   & 4 e^{- t_1} \Vert w f_0\Vert_{L^\infty_{x,v}}+ o(1)e^{-\lambda t_1}\sup_{0\leq s\leq T_0} \Vert e^{\lambda s}w f(s)\Vert_{L^\infty_{x,v}}  \\
    &+   C(T_0) e^{-\lambda t_1}  \sup_{0\leq s\leq T_0}  \Vert e^{\lambda s} w g(s) \Vert_{L^\infty_{x,v}}  +  C(T_0) e^{-\lambda t_1} \sup_{0\leq s\leq T_0} e^{\lambda s}\Vert f(s)\Vert_{L^2_{x,v}} .
\end{split}
\end{equation*}
\end{lemma}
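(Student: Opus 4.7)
The plan is to bound the boundary term \eqref{f_bdr} by substituting the iterated characteristic expansion \eqref{bdr_initial}--\eqref{bdr_g_i} and estimating each of the six contributions, with $k=C_1 T_0^{5/4}$ fixed so that Lemma \ref{lemma:tk} applies to the uncontrolled-bounce term \eqref{bdr_tk}. Throughout, one exploits that the measure $d\Sigma_i$ in \eqref{Sigma} is essentially a product of probability measures $d\sigma_j$, and that $w(v_i)^{-1}\mu(v_i)^{-1/2}$ is uniformly bounded by a constant depending only on $\theta<1/4$. Hence integration over $\prod_j \mathcal V_j$ contributes only constants, and the real content is controlling the integrand pointwise in $s$ and then putting the exponential factors together.

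The routine pieces are the initial-data term \eqref{bdr_initial}, the source terms \eqref{bdr_g_0}, \eqref{bdr_g_i}, and the many-bounce term \eqref{bdr_tk}. For \eqref{bdr_initial} one bounds $|f(0,\cdot,v_i)|\le w(v_i)^{-1}\|wf_0\|_\infty$ and combines the prefactors $e^{-(T_0-t_1)}e^{-t_i}\le e^{-t_1}$ (using $t_i\le t_1\le T_0$) to produce the desired $C(\theta)e^{-t_1}\|wf_0\|_\infty$ contribution. For the source terms one inserts $e^{\lambda s}e^{-\lambda s}$, bounds $|g(s,\cdot,v_i)|\le w(v_i)^{-1}e^{-\lambda s}\sup_s\|e^{\lambda s}wg(s)\|_\infty$, and computes $\int_0^{T_0}e^{-(T_0-s)}e^{-\lambda s}\,ds\lesssim e^{-\lambda T_0}\le e^{-\lambda t_1}$, absorbing the rest into a $T_0$-dependent constant. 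For \eqref{bdr_tk} one pulls $|f(t_k,x_k,v_{k-1})|\le w(v_{k-1})^{-1}e^{-\lambda t_k}\sup\|e^{\lambda s}wf\|_\infty$ outside and invokes Lemma \ref{lemma:tk} to dominate the remaining probability by $(1/2)^{C_2 T_0^{5/4}}$, which is $o(1)$ once $T_0$ is large enough and therefore merges into the stated $o(1)e^{-\lambda t_1}\sup\|e^{\lambda s}wf(s)\|_\infty$ term.

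The main obstacle is the kernel terms \eqref{bdr_K_0} and \eqref{bdr_K_i}, because the inner integral $\int_{\R^3}\chi_\ell(u)f(s,y_i,u)\,du$ with $y_i=x_i-(t_i-s)v_i$ cannot be bounded by $\|wf\|_\infty$ without producing a term of the same order as the left-hand side, which would spoil any absorption argument. The idea is an $L^\infty$--$L^2$ split in $u$ in the spirit of \cite{G}: for $|u|\ge N$ the weighted tail $\int_{|u|\ge N}|\chi_\ell(u)|w(u)^{-1}\,du\le \eta(N)$ with $\eta(N)\downarrow 0$, producing after the easy $s$-integration the small $o(1)e^{-\lambda t_1}\sup\|e^{\lambda s}wf\|_\infty$ contribution. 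For $|u|<N$ one uses Cauchy--Schwarz in $u$ on the ball $B_N$ and then performs the change of variables $v_i\mapsto y_i=x_i-(t_i-s)v_i\in\Omega$ inside $d\sigma_i$; on the set $|t_i-s|\ge\kappa$ the Jacobian is controlled by $\kappa^{-3}$, so after swapping the order of integration one recognizes $\int_\Omega\int_{B_N}|f(s,y,u)|^2\,du\,dy\le\|f(s)\|_{L^2_{x,v}}^2$ and extracts the $C(T_0)e^{-\lambda t_1}\sup e^{\lambda s}\|f(s)\|_{L^2_{x,v}}$ contribution. On the complementary strip $|t_i-s|<\kappa$, one bounds $f$ by $\|wf\|_\infty$ and uses that the $s$-integral has length at most $\kappa$, so the contribution is absorbed into the $o(1)$ factor by choosing $\kappa$ small. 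The delicate part is uniform control of the Jacobian and of the grazing velocities $n(x_i)\cdot v_i\approx 0$ in the compact-$u$ change of variables, handled by the decay of $\mu(v_i)$ in $d\sigma_i$ for large $v_i$ and the smallness of the grazing set in $d\sigma_i$ for small $n(x_i)\cdot v_i$. Assembling the six pieces and collecting the exponential prefactors yields the stated inequality.
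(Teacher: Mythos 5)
Your proposal follows essentially the same route as the paper: direct telescoping of the exponential prefactors in $d\Sigma_i$ for the initial and source terms, Lemma \ref{lemma:tk} with $k=C_1T_0^{5/4}$ for the many-bounce term, and for the kernel terms \eqref{bdr_K_0}, \eqref{bdr_K_i} the same three-way decomposition (tail in $u$, short strip near $s\approx t_i$, and the main bulk where the change of variables $v_i\mapsto y=x_i-(t_i-s)v_i$ with Jacobian $(t_i-s)^3\geq\kappa^3$ converts the velocity integral into a spatial one so that H\"older yields $\|f(s)\|_{L^2_{x,v}}$). Two minor caveats: the intermediate step where you apply Cauchy--Schwarz in $u$ \emph{before} the change of variables needs a second H\"older in $y$ to reach $\|f(s)\|_{L^2_{x,v}}$ (the paper changes variables first and then applies H\"older once on $\Omega\times B_N$, which is cleaner), and your worry about grazing velocities $n(x_i)\cdot v_i\approx0$ in the change of variables is a non-issue --- the Jacobian of $v_i\mapsto y$ is exactly $(t_i-s)^3$ independently of the direction of $v_i$, and the leftover density $\sqrt{\mu(v_i)}\,|n(x_i)\cdot v_i|\,w^{-1}(v_i)$ is simply bounded pointwise.
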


\begin{proof}
Since $\dd \sigma_i$ in~\eqref{sigma_i} is a probability measure, \eqref{bdr_initial} is directly bounded as
\begin{equation}\label{bdr_initial_bdd}
\eqref{bdr_initial}\leq 4 e^{- t_1}   \Vert w f_0\Vert_{L^\infty_{x,v}},
\end{equation}
{\color{black}where the constant $4$ comes from
\[\int_{\mathcal{V}_i}   |n(x_i)\cdot v_i| \sqrt{\mu(v_i)} w^{-1}(v_i) \dd v_i \leq \int_{\mathcal{V}_i}|n(x)\cdot v_i|\sqrt{\mu(v_i)}\dd v_i < 4.\]}
The exponential decay factor $e^{- t_1}$ in \eqref{bdr_initial_bdd} comes from the combinations of the decay factor in \eqref{Sigma}:
\begin{align*}
    & e^{-t_i} e^{-(t_{i-1}-t_i)} \leq e^{-t_{i-1}}, \ e^{-t_{i-1}} e^{-(t_{i-2}-t_{i-1})} \leq e^{-t_{i-2}} \cdots.
\end{align*}

For~\eqref{bdr_tk}, with $\lambda \ll 1$ and $k=C_1 T_0^{5/4}$, we apply Lemma \ref{lemma:tk} to have
\begin{equation}\label{bdr_tk_bdd}
\begin{split}
 \eqref{bdr_tk}   &  \leq \int_{\prod_{j=1}^{k-1} \mathcal{V}_j} \mathbf{1}_{t_k>0} e^{-\lambda t_k} |e^{\lambda t_k} w(v_{k-1}) f(t_k,x_k,v_{k-1})| \dd \Sigma_{k-1} \\
 &\leq o(1)e^{-\lambda t_1} \sup_{0\leq s\leq T_0}\Vert e^{\lambda s } w f(s)\Vert_{L^\infty_{x,v}}.
\end{split}
\end{equation}
Here the exponential decay factor $e^{-\lambda t_1}$ comes from the following computation:
\begin{align*}
    &     e^{-\lambda t_k} e^{-(t_{k-1}-t_k)} \leq e^{-\lambda t_{k-1}}, \  e^{-\lambda t_{k-1}} e^{-(t_{k-2}-t_{k-1})} \leq e^{-\lambda t_{k-2}} \cdots .
\end{align*}

For \eqref{bdr_g_0} and \eqref{bdr_g_i}, they are directly bounded as
\begin{equation}\label{bdr_g_bdd}
\begin{split}
|\eqref{bdr_g_0} + \eqref{bdr_g_i}| & \leq  \sup_{0\leq s\leq T_0} \Vert e^{\lambda s} wg(s)\Vert_{L^\infty_{x,v}}  \sum_{i=1}^{k-1} \int_{\prod_{j=1}^{k-1}\mathcal{V}_j} \int^{t_i}_{\max\{0,t_{i+1}\}} e^{- (t_1-s)} e^{-\lambda s} 
\dd s \dd \Sigma_i    \\
&\leq Cke^{-\lambda t_1} \sup_{0\leq s\leq T_0} e^{\lambda s}\Vert w g(s) \Vert_{L^\infty_{x,v}}   \int_0^{T_0} e^{-(T_0-s)/2}  \dd s \\ 
& \leq  Cke^{-\lambda t_1} \sup_{0\leq s\leq T_0} e^{\lambda s}\Vert w g(s) \Vert_{L^\infty_{x,v}} .
\end{split}
\end{equation}

Then we estimate \eqref{bdr_K_i}, which reads
\begin{equation}\label{iteration_i}
\begin{split}
    & \int_{\prod_{j=1}^i \mathcal{V}_j}\mathbf{1}_{t_{i+1}>0 }  \prod_{j=1}^i\dd \sigma_j \mu^{-1/2}(v_i) w^{-1}(v_i)   \\
    & \times     \int_{t_{i+1}}^{t_i} e^{-(t_1 -s)} \sum_{\ell = 0}^4 \chi_\ell (v_i) \int_{\mathbb{R}^3}  w(u)f(s,x_i-(t_i-s)v_i,u) \frac{\chi_\ell(u)}{w(u)} \dd u \dd s .
\end{split}
\end{equation}

First we decompose the $\dd s$ integral into $\mathbf{1}_{s\geq t_i-\delta} + \mathbf{1}_{s< t_i-\delta}$. The contribution of the first term reads
\begin{align}
 \eqref{iteration_i} \mathbf{1}_{s\geq t_i-\delta}   &  \leq \int_{\prod_{j=1}^{i}\mathcal{V}_j} \prod_{j=1}^{i}\dd \sigma_j \mu^{-1/2}(v_i) w^{-1}(v_i) \notag\\ 
 & \times \int^{t_i}_{\max\{t_{i+1},t_i-\delta\}} e^{-(t_1-s)} \sum_{\ell = 0}^4  w(v_i) \chi_\ell (v_i) \int_{\mathbb{R}^3}  w(u) f(s,x_i-(t_i-s)v_i,u) \frac{\chi_\ell(u)}{w(u)} \dd u \dd s  \notag\\
 &\leq o(1)e^{-\lambda t_1} \sup_{0\leq s\leq T_0}\Vert e^{\lambda s} w f(s)\Vert_{L^\infty_{x,v}} . \label{bdr_s_small_bdd}
\end{align}

Then we decompose the $\dd u$ integral into $\mathbf{1}_{|u|\geq N }  + \mathbf{1}_{|u|<N}$. The contribution of the first term reads
\begin{align}
    & \eqref{iteration_i} \mathbf{1}_{|u|\geq N}  \leq \int_{\prod_{j=1}^{i}\mathcal{V}_j} \prod_{j=1}^{i}\dd \sigma_j  \mu^{-1/2}(v_i) w^{-1}(v_i)  \notag\\
    &\ \ \times  \int_{t_{i+1}}^{t_i} e^{-(t_1 -s)} 
 \sum_{\ell = 0}^4 \chi_\ell(u) \int_{\mathbb{R}^3}  \mathbf{1}_{|u|\geq N}  w(u) f(s,X_i(s),u) \frac{\chi_\ell(u)}{w(u)} \dd s \notag\\
    & \leq o(1) e^{-\lambda t_1} \sup_{0\leq s \leq T_0}\Vert e^{\lambda s}w f(s)\Vert_{L^\infty_{x,v}}.   \label{bdr_u_large_bdd}
\end{align}

Last, we consider the intersection of all other cases, where we have $s<t_i-\delta$, and $|u|<N$. We compute
\begin{align}
  & \eqref{iteration_i} \mathbf{1}_{s<t_i-\delta} \mathbf{1}_{|u|<N}  \notag\\
  &\leq   \int_{\prod_{j=1}^{i-1}\mathcal{V}_j} \prod_{j=1}^{i-1}\dd \sigma_j \int_{\mathcal{V}_i}  \sqrt{\mu(v_i)} |n(x_i)\cdot v_i| \dd v_i   \notag\\ 
  &\times \int_{t_{i+1}}^{t_i-\delta}  e^{-(t_i -s)} \int_{|u|\leq N}  f(s,x_i-(t_i-s)v_i,u) \dd u\dd s  \notag \\
  & \leq \frac{1}{\delta^3}\int_{\prod_{j=1}^{i-1}\mathcal{V}_j} \prod_{j=1}^{i-1}\dd \sigma_j  \int^{t_i-\delta}_{0}  e^{-(t_1-s)}     \int_{|u|\leq N} \int_{\O} f(s,y,u) \dd u \dd y \dd s.       \label{other_case}
\end{align}
In the last line, we have applied the change of variable $v_i \to y = x_i-(t_i-s)v_i \in \O$ with Jacobian 
\[\Big|\det\Big(\frac{\p x_i-(t_i-s)v_i}{\p v_i} \Big) \Big| = (t_i-s)^3 \geq \delta^3 .\]

Then we leverage the $L^2$ estimate by applying the H\"older inequality
\begin{align}
  \eqref{other_case}    & \leq  C_{N,\delta,\O}   \int_{\prod_{j=1}^{i-1}\mathcal{V}_j} \prod_{j=1}^{i-1}\dd \sigma_j \times    \int_{0}^{t_1} e^{-(t_1-s)} \Vert f(s)\Vert_{L^2_{x,v}} \dd s \notag\\
  & \leq  C_{N,\delta,\O} e^{-\lambda t_1} \sup_{0\leq s\leq T_0} e^{\lambda s} \Vert f(s)\Vert_{L^2_{x,v}}  . \label{other_case_bdd}
\end{align}

Collecting \eqref{bdr_s_small_bdd}, \eqref{bdr_u_large_bdd} and \eqref{other_case_bdd}, we conclude that
\begin{equation}\label{bdr_K_i_bdd}
\eqref{bdr_K_i} \lesssim o(1) e^{-\lambda t_1} \sup_{0\leq s\leq T_0} \Vert e^{\lambda s}w f(s)\Vert_{L^\infty_{x,v}} + C_{N,\delta,k,\O} e^{-\lambda t_1} \sup_{0\leq s\leq T_0} e^{\lambda s} \Vert f(s)\Vert_{L^2_{x,v}}. 
\end{equation}

By the same computation, we obtain the same bound for \eqref{bdr_K_0}:
\begin{equation}\label{bdr_K_0_bdd}
\eqref{bdr_K_0} \lesssim o(1) e^{-\lambda t_1} \sup_{0\leq s\leq T_0} \Vert e^{\lambda s}w f(s)\Vert_{L^\infty_{x,v}} + C_{N,\delta,k,\O} e^{-\lambda t_1} \sup_{0\leq s\leq T_0} e^{\lambda s} \Vert f(s)\Vert_{L^2_{x,v}}.  
\end{equation}

Summarizing \eqref{bdr_initial_bdd}, \eqref{bdr_tk_bdd}, \eqref{bdr_g_bdd}, \eqref{bdr_K_0_bdd} and \eqref{bdr_K_i_bdd}, and using the fact that $k$ is a function of $T_0$, we conclude the lemma with $C(T_0) = C_{N,\delta,\O,k}$.
\end{proof}

Now we are in a position to prove Proposition \ref{prop:linfty}.

\begin{proof}[\textbf{Proof of Proposition \ref{prop:linfty}}]

We focus on the a priori estimate. We will discuss the construction of solution using an approximating sequence at the end of the proof.

First of all, \eqref{initial}, \eqref{g_0} and \eqref{g_1} are bounded as
\begin{equation}\label{initial_g_bdd}
|\eqref{initial}| + |\eqref{g_0}| + |\eqref{g_1}| \leq e^{- T_0} \Vert w f_0\Vert_{L^\infty_{x,v}} + Ce^{- \lambda T_0}\sup_{0\leq s\leq T_0} \Vert e^{\lambda s} w g(s) \Vert_{L^\infty_{x,v}}.
\end{equation}
Moreover, \eqref{f_bdr} is bounded by Lemma \ref{lemma:bdr} as
\begin{equation}\label{f_bdr_bdd}
\begin{split}
 \eqref{f_bdr} \leq   &  4 e^{- T_0} \Vert w f_0\Vert_{L^\infty_{x,v}} + o(1)e^{-\lambda T_0}\sup_{0\leq s\leq T_0}\Vert e^{\lambda s} w f(s)\Vert_{L^\infty_{x,v}}  \\
    &  + C(T_0) \big[e^{-\lambda T_0} \sup_{0\leq s\leq T_0}\Vert e^{\lambda s} w g(s) \Vert_{L^\infty_{x,v}} +  e^{-\lambda T_0}\sup_{0\leq s\leq T_0} \Vert f(s)\Vert_{L^2_{x,v}} \big].
\end{split}
\end{equation}

Then we focus on~\eqref{K_1}. We expand $f(s,x-(t-s)v,u)$ using \eqref{initial} - \eqref{f_bdr} again along the characteristic with velocity $u$.

Denote $t^u_1:= s - \tb(x-(t-s)v,u)$ and $y:=x-(t-s)v$, we compute that
\begin{align}
    & \eqref{K_1} = \mathbf{1}_{t_1>0} \int^{T_0}_{t_1} \dd s e^{-(T_0 -s)}  \sum_{i=0}^4 w(v)\chi_i(v)\int_{\mathbb{R}^3} \dd u  \frac{\chi_i(u)}{w(u)} \notag \\
    &\times \Big\{ \mathbf{1}_{t_1^u \leq 0} e^{- s} w(u)f(0,y-su,u)       \label{u_initial} \\
    &  +\mathbf{1}_{t_1^u\leq 0} \int_0^{s}  e^{-(s-s')} \dd s'   \sum_{j=0}^4 w(u)\chi_j(u)  \int_{\mathbb{R}^3} w(u') f(s',y-(s-s')u, u') \frac{\chi_j(u')}{w(u')}    \dd u' \label{u_K0} \\
    &  +\mathbf{1}_{t_1^u> 0} \int_{t_1^u}^{s}  e^{-(s-s')} \dd s'  \sum_{j=0}^4 w(u)\chi_j(u)  \int_{\mathbb{R}^3} w(u')  f(s',y-(s-s')u, u') \frac{\chi_j(u')}{w(u')}  \dd u' \label{u_K1}\\
    & + \mathbf{1}_{t_1^u\leq 0} \int_{0}^{s} e^{-(s-s')} w(u) g(s',y-(s-s')u,u) \dd s' \label{u_g0} \\
    & + \mathbf{1}_{t_1^u> 0} \int_{t_1^u}^{s} e^{-(s-s')} w(u) g(s',y-(s-s')u,u) \dd s' \label{u_g1} \\
    & + \mathbf{1}_{t_1^u > 0} e^{-(s-t_1^u)} w(u) f(t_1^u,y-\tb(y,u)u,u) \Big\}.   \label{u_bdr}
\end{align}

The contribution of \eqref{u_initial} in \eqref{K_1} is bounded by
\begin{align}
    & \int^{T_0}_{t_1} \dd s e^{- (T_0-s)} 
   \sum_{i=0}^4 w(v)\chi_i(v)  \int_{\mathbb{R}^3} \dd u \frac{\chi_i(u)}{w(u)}  e^{- s} \Vert w f_0\Vert_{L^\infty_{x,v}}    \notag \\
    &\leq C_\beta\int_{t_1}^{T_0} \dd s e^{-(T_0-s)/2} e^{-\frac{1}{2}T_0} \Vert wf_0\Vert_{L^\infty_{x,v}}  \leq C_\beta e^{-\frac{ T_0}{2}} \Vert w f_0\Vert_{L^\infty_{x,v}} . \label{u_initial_bdd}
\end{align}
Here the constant $C_\beta$ comes from $(1+|v|)^{\beta}e^{\theta |v|^2}\chi_i(v)\lesssim C_\beta$.

The contribution of \eqref{u_g0} and \eqref{u_g1} in \eqref{K_1} are bounded by
\begin{align}
    & \int_{t_1}^{T_0} \dd s e^{-(T_0-s)} \sum_{i=0}^4 w(v)\chi_i(v) \int_{\mathbb{R}^3} \dd u  \frac{\chi_i(u)}{w(u)} \int^s_0 \dd s' e^{-(s-s')}  e^{-\lambda s'} \sup_{0\leq s\leq T_0} \Vert  e^{\lambda s}wg(s)\Vert_{L^\infty_{x,v}}  \notag  \\
    & \leq C \sup_{0\leq s\leq T_0}\Vert e^{\lambda s} wg(s)\Vert_{L^\infty_{x,v}} \int_{t_1}^{T_0} \dd s e^{-(T_0-s)} e^{-\lambda s}\int_{\mathbb{R}^3} \dd u  \frac{\chi_i(u)}{w(u)} \notag \\
    &\leq C \sup_{0\leq s\leq T_0}\Vert  e^{\lambda s}wg(s)\Vert_{L^\infty_{x,v}}. \label{u_g_bdd}
\end{align}

The contribution of the boundary term in~\eqref{u_bdr} can be bounded by applying Lemma \ref{lemma:bdr}:
\begin{align}
 | \eqref{u_bdr} | & \leq \int_{t_1}^{T_0} \dd s e^{-(T_0-s)} \sum_{i=0}^4 w(v)\chi_i(v) \int_{\mathbb{R}^3} \dd u \frac{\chi_i(u)}{w(u)} e^{- (s-t_1^u)} \notag \\
 & \times \Big\{ e^{- t_1^u} 4\Vert w f_0\Vert_{L^\infty_{x,v}} + o(1)e^{-\lambda t_1^u}\sup_{0\leq s\leq T_0}\Vert e^{\lambda s} w f(s)\Vert_{L^\infty_{x,v}} \notag \\
  &+ C(T_0) \big[e^{-\lambda t_1^u}\sup_{0\leq s\leq T_0} \Vert e^{\lambda s}w g(s) \Vert_{L^\infty_{x,v}}  +  e^{-\lambda t_1^u} \sup_{0\leq s \leq T_0} e^{\lambda s}\Vert f(s)\Vert_{L^2_{x,v}}  \big] \Big\} \notag\\
  &\leq  C_\beta \int_{t_1}^{T_0} \dd s e^{-(T_0-s)}    \times \Big[ 4e^{-\frac{ s}{2}} \Vert w f_0\Vert_{L^\infty_{x,v}} + o(1)e^{-\lambda s}\sup_{0\leq s\leq T_0} \Vert e^{\lambda s} wf(s)\Vert_{L^\infty_{x,v}}  \notag\\
  & + C(T_0) \big[e^{-\lambda s}\sup_{0\leq s\leq T_0} \Vert e^{\lambda s} wg(s) \Vert_{L^\infty_{x,v}}  +  e^{-\lambda s} \sup_{0\leq s \leq T_0} e^{\lambda s}\Vert f(s)\Vert_{L^2_{x,v}}  \big]   \Big]     \notag\\
  &\leq   4C_\beta e^{-\frac{ T_0}{2}} \Vert w f_0\Vert_{L^\infty_{x,v}} + o(1) e^{-\lambda T_0}\sup_{0\leq s\leq T_0}\Vert e^{\lambda s}w f(s)\Vert_{L^\infty_{x,v}}  \notag\\
  & + C(T_0)\big[e^{-\lambda T_0}\sup_{0\leq s\leq T_0} \Vert e^{\lambda s}wg(s) \Vert_{L^\infty_{x,v}}  +   e^{-\lambda T_0} \sup_{0\leq s \leq T_0} e^{\lambda s}\Vert f(s)\Vert_{L^2_{x,v}} \big].   \label{u_bdr_bdd}
\end{align}
Again, the constant $C_\beta$ comes from $w(v)\chi_i(v)\lesssim C_\beta$.

Then we focus on the contribution of \eqref{u_K1} in \eqref{K_1}. First we decompose the $\dd s' $ integral into $\mathbf{1}_{s-s'< \delta} + \mathbf{1}_{s-s'\geq \delta}$. The contribution of the first term reads
\begin{align}
    & |\eqref{u_K1} \mathbf{1}_{s-s'<\delta} |\notag \\
   & \leq   \int_{t_1}^{T_0} \dd s e^{-(T_0-s)} \sum_{i=0}^4 w(v)\chi_i(v)    \int_{\mathbb{R}^3} \dd u   \frac{\chi_i(u)}{w(u)}   \int^s_{\max\{s-\delta,t_1^u\}} e^{-(s-s')} \dd s' \notag \\
   & \times \sum_{j=0}^4 w(u)\chi_j(u) \int_{\mathbb{R}^3} \dd u' \frac{\chi_j(u')}{w(u')}         e^{-\lambda s'} \sup_{0\leq s\leq T_0}\Vert e^{\lambda s} w f(s)\Vert_{L^\infty_{x,v}}   \notag \\
   & \leq o(1)\int_{t_1}^{T_0} \dd s e^{-(T_0-s)}  \sum_{i=0}^4 w(v)\chi_i(v) \int_{\mathbb{R}^3} \dd u \frac{\chi_i(u)}{w(u)} e^{-\lambda s} \sup_{0\leq s\leq T_0}\Vert e^{\lambda s} w f(s)\Vert_{L^\infty_{x,v}} \notag \\
    & \leq    o(1)e^{-\lambda T_0}\sup_{0\leq s\leq T_0}\Vert e^{\lambda s} w f(s)\Vert_{L^\infty_{x,v}}. \label{u_K1_s_small}
\end{align}

Next we decompose the $\dd u'$ integral into $\mathbf{1}_{|u'|\geq N} + \mathbf{1}_{|u'|\leq N}$. The contribution of the first term reads
\begin{align}
   & |\eqref{u_K1}\mathbf{1}_{|u'|\geq N } |\notag\\
   & \leq o(1)\int_{t_1}^{T_0} \dd s e^{-(T_0-s)} \sum_{i=0}^4 w(v)\chi_i(v) \int_{\mathbb{R}^3} \dd u \frac{\chi_i(u)}{w(u)} \int^s_{t_1^u} e^{-(s-s')}  e^{-\lambda s'}\dd s' \sup_{0\leq s\leq T_0}\Vert e^{\lambda s} w f(s)\Vert_{L^\infty_{x,v}}   \notag\\
   & \leq   o(1)\int_{t_1}^{T_0} \dd s e^{-(T_0-s)}  e^{-\lambda s} \sup_{0\leq s\leq T_0} \Vert e^{\lambda s} w f(s)\Vert_{L^\infty_{x,v}}    \notag\\
   & \leq  o(1) e^{-\lambda T_0} \sup_{0\leq s\leq T_0}\Vert e^{\lambda s}w f(s)\Vert_{L^\infty_{x,v}}. \label{u_K1_u_prime_small}
\end{align}

Now we consider the intersection of all other cases, where we have $s'<s-\delta$ and $|u'|<N$. In such case, we compute such contribution in \eqref{u_K1} as
\begin{align}
    &  |\eqref{u_K1}\mathbf{1}_{s'<s-\delta, \ |u'|<N} |\notag\\
    &  \leq  \int_{t_1}^{T_0} \dd s e^{-(T_0 -s)} \int_{\mathbb{R}^3} \dd u \frac{\chi_i(u)}{w(u)} \int^{s-\delta}_{t_1^u} e^{-(s-s')} \dd s' \int_{|u'|<N} \dd u'   f(s', y-(s-s')u,u') . \label{u_K1_other}
\end{align}
We apply the change of variable $u\to y'=y-(s-s')u$ with Jacobian 
\[\Big|\det\Big(\frac{\p [y-(s-s')u]}{\p u} \Big) \Big| = (s-s')^3 \geq \delta^3 \]
to derive that
\begin{align}
 & |\eqref{u_K1_other}| \leq      \int_{t_1}^{T_0} \dd s e^{-(T_0-s)} \int_{\O} \dd y'  \int_{t_1^u}^{s-\delta} e^{-(s-s')} \dd s' \int_{|u'|<N} f(s',y',u') \dd u'           \notag \\
  & \leq C_{N,\O} \int_{t_1}^{T_0} \dd s e^{-(T_0-s)}  \int_{0}^{s-\delta} e^{-(s-s')} \Vert f(s')\Vert_{L^2_{x,v}} \dd s'     \notag \\
  & \leq C_{N,\O}  \sup_{0\leq s\leq T_0}  e^{\lambda s}\Vert f(s)\Vert_{L^2_{x,v}} \int_{t_1}^{T_0} \dd s e^{-(T_0-s)}   e^{-\lambda s} \notag \\
  &\leq  C_{N,\O} e^{-\lambda T_0} \sup_{0\leq s\leq T_0} e^{\lambda s}\Vert f(s)\Vert_{L^2_{x,v}} .  \label{u_K1_other_bdd}
\end{align}

Collecting \eqref{u_K1_s_small}, \eqref{u_K1_u_prime_small} and \eqref{u_K1_other_bdd}, we conclude
\begin{equation}\label{u_K1_bdd}
|\eqref{u_K1}| \leq o(1) e^{-\lambda T_0} \sup_{0\leq s\leq T_0} \Vert e^{\lambda s}w f(s)\Vert_{L^\infty_{x,v}} + C(T_0)  e^{-\lambda T_0}\sup_{0\leq s\leq T_0} e^{\lambda s}\Vert f(s)\Vert_{L^2_{x,v}} .
\end{equation}

By the same computation, we obtain that
\begin{equation}\label{u_K0_bdd}
 |\eqref{u_K0}| \leq  o(1) e^{-\lambda T_0} \sup_{0\leq s\leq T_0} \Vert e^{\lambda s}w f(s)\Vert_{L^\infty_{x,v}} + C(T_0) e^{-\lambda T_0}\sup_{0\leq s\leq T_0} e^{\lambda s}\Vert f(s)\Vert_{L^2_{x,v}}.   
\end{equation}

We combine \eqref{u_initial_bdd}, \eqref{u_g_bdd}, \eqref{u_bdr_bdd}, \eqref{u_K0_bdd} and \eqref{u_K1_bdd} to conclude the estimate for \eqref{K_1}:
\begin{equation}\label{K1_bdd}
\begin{split}
   \eqref{K_1} & \leq (4+5C_\beta)e^{-\frac{ T_0}{2}} \Vert w f_0\Vert_{L^\infty_{x,v}} + o(1)e^{-\lambda T_0}\sup_{0\leq s\leq T_0} \Vert e^{\lambda s}w f(s)\Vert_{L^\infty_{x,v}} \notag \\
   &+ C(T_0) e^{-\lambda T_0}\sup_{0\leq s\leq T_0} \Vert e^{\lambda s}w g(s) \Vert_{L^\infty_{x,v}}   +C(T_0) e^{-\lambda T_0}\sup_{0\leq s\leq T_0} e^{\lambda s}\Vert f(s)\Vert_{L^2_{x,v}}.
\end{split}
\end{equation}

Similarly, we can have the same estimate for \eqref{K_0} as
\begin{equation}\label{K0_bdd}
\begin{split}
   |\eqref{K_0}| & \leq (4+5C_\beta)e^{-\frac{ T_0}{2}} \Vert w f_0\Vert_{L^\infty_{x,v}} + o(1)e^{-\lambda T_0}\sup_{0\leq s\leq T_0} \Vert e^{\lambda s}w f(s)\Vert_{L^\infty_{x,v}}    \\
    & + C(T_0) e^{-\lambda T_0}\sup_{0\leq s\leq T_0} \Vert e^{\lambda s} w g(s) \Vert_{L^\infty_{x,v}} +C(T_0)  e^{-\lambda T_0}\sup_{0\leq s\leq T_0} e^{\lambda s}\Vert f(s)\Vert_{L^2_{x,v}}.
\end{split}
\end{equation}

Last we collect \eqref{initial_g_bdd}, \eqref{f_bdr_bdd}, \eqref{K1_bdd} and \eqref{K0_bdd} to conclude that
\begin{align}
 &w(v) |f(T_0,x,v)|   \notag\\
 &   \leq (5+5C_\beta)e^{-\frac{ T_0}{2}} \Vert w f_0\Vert_{L^\infty_{x,v}} +o(1)e^{-\lambda T_0}\sup_{0\leq s\leq T_0} \Vert e^{\lambda s}w f(s)\Vert_{L^\infty_{x,v}}     \label{C_theta} \\
  &  + C(T_0) e^{-\lambda T_0}\sup_{0\leq s\leq T_0}\Vert e^{\lambda s}w g(s) \Vert_{L^\infty_{x,v}} +C(T_0) e^{-\lambda T_0}\sup_{0\leq s\leq T_0} e^{\lambda s}\Vert f(s)\Vert_{L^2_{x,v}}.  \notag
\end{align}

Since the source term $g$ and initial condition $f_0$ satisfy \eqref{linfty_condition}, the conditions in Proposition \ref{prop:l2} are satisfied. {\color{black}With the weight $w(v)=(1+|v|)^\beta e^{\theta |v|^2}$, we control the $L^2$ term by Proposition \ref{prop:l2}:
\begin{align}
   \sup_{0\leq s\leq T_0}e^{\lambda s}\Vert f(s)\Vert_{L^2_{x,v}} & \lesssim \Vert f_0\Vert_{L^2_{x,v}} + \Big(\int_0^{T_0} e^{-2\lambda s}\Vert e^{2\lambda s}g(s)\Vert_{L^2_{x,v}}^2\dd s   \Big)^{1/2}\notag \\
   & \lesssim [\Vert wf_0\Vert_{L^\infty_{x,v}} + \sup_{0\leq s\leq T_0}\Vert e^{2\lambda s}wg(s)\Vert_{L^\infty_{x,v}}] \Vert w^{-1}(v)\Vert_{L^2_{x,v}} \notag\\
   &
   \lesssim  \Vert wf_0\Vert_{L^\infty_{x,v}} +  \sup_{0\leq s\leq T_0} \Vert e^{2\lambda s} w g(s)\Vert_{L^\infty_{x,v}} . \label{l2_term}
\end{align}
Here we have applied the definition of $w(v)$ in \eqref{weight} such that $w^{-1}(v)\in L^2_v$.}

For given $0\leq t<\infty$, we denote
\begin{align*}
    \mathcal{R}_t := \Vert wf_0\Vert_{L^\infty_{x,v}} + \sup_{0\leq s\leq t}\Vert e^{2\lambda s} w g(s) \Vert_{L^\infty_{x,v}}.
\end{align*}

Recall that $C_\beta$ in \eqref{C_theta} does not depend on $T_0$. We choose $T_0$ to be large enough such that  $(5+5C_\beta)e^{-\frac{ T_0}{2}} < e^{-\frac{ T_0}{4}}$. Then we further have
\begin{align}
   \Vert  wf(T_0)\Vert_{L^\infty_{x,v}}  &  \leq  e^{-\frac{ T_0}{4}} \Vert wf_0\Vert_{L^\infty_{x,v}}    + o(1)e^{-\lambda T_0}\sup_{0\leq s\leq T_0} \Vert e^{\lambda s}w f(s)\Vert_{L^\infty_{x,v}} \notag \\
   &+  C(T_0) e^{-\lambda T_0} \sup_{0\leq s\leq T_0} \Vert e^{2\lambda s} w g(s) \Vert_{L^\infty_{x,v}} + C(T_0)e^{-\lambda T_0}\sup_{0\leq s\leq T_0} e^{\lambda s}\Vert f(s)\Vert_{L^2_{x,v}}.  \label{est_T0}
\end{align}

For $0\leq t\leq T_0$, with the same choice of $k=C_1 T_0^{5/4}$, it is straightforward to apply the same argument for $e^{\lambda t}w(v)|f(t,x,v)|$ to have:
\begin{align}
  \Vert wf(t)\Vert_{L^\infty_{x,v}}  & \leq (5+5C_\beta)e^{-\frac{ t}{2}}\Vert wf_0\Vert_{L^\infty_{x,v}} + o(1) e^{-\lambda t} \sup_{0\leq s\leq t}\Vert e^{\lambda s} wf(s)\Vert_{L^\infty_{x,v}} \notag\\
  & + C(T_0)e^{-\lambda t} \sup_{0\leq s\leq t} \Vert e^{2\lambda s} w g(s) \Vert_{L^\infty_{x,v}} + C(T_0)e^{-\lambda t}\sup_{0\leq s\leq t} e^{\lambda s}\Vert f(s)\Vert_{L^2_{x,v}}. \label{est_t}
\end{align} 

For $t=mT_0$, we apply \eqref{est_T0} to have
\begin{align}
  &\Vert wf(mT_0)\Vert_{L^\infty_{x,v}}  \notag\\
  &  \leq e^{-\frac{ T_0}{4}} \Vert wf((m-1)T_0)\Vert_{L^\infty_{x,v}} + C(T_0)e^{-\lambda T_0} \sup_{0 \leq s\leq T_0} \Vert e^{\lambda s} g((m-1)T_0+s) \Vert_{L^\infty_{x,v}}   \notag\\
  & + o(1)e^{-\lambda T_0} \sup_{0 \leq s\leq T_0} \Vert e^{\lambda s }w f((m-1)T_0+s) \Vert_{L^\infty_{x,v}} + C(T_0) e^{-\lambda T_0} \sup_{0\leq s\leq T_0}e^{\lambda s}\Vert f((m-1)T_0+s)\Vert_{L^2_{x,v}}  \notag\\
  & \leq e^{-\frac{ T_0}{4}} \Vert wf((m-1)T_0)\Vert_{L^\infty_{x,v}} +o(1)e^{-\lambda m T_0} \sup_{0 \leq s\leq mT_0} \Vert e^{\lambda s }w f(s) \Vert_{L^\infty_{x,v}} + C(T_0) e^{-\lambda m T_0} \mathcal{R}_{mT_0} \notag\\
  & \leq e^{-2\frac{ T_0}{4}} \Vert wf((m-2)T_0)\Vert_{L^\infty_{x,v}} \notag \\
  &+  e^{-\lambda m T_0} \Big[o(1)\sup_{0\leq s\leq mT_0} \Vert e^{\lambda s}w f(s)\Vert_{L^\infty_{x,v}} + C(T_0) \mathcal{R}_{mT_0} \Big]\times \big[1 + e^{-\frac{(1-4\lambda) T_0}{4}} \big] \notag\\
  & \leq \cdots \leq e^{-\frac{mT_0}{4}} \Vert wf_0\Vert_{L^\infty_{x,v}} \notag\\
  & + e^{-\lambda m T_0} \Big[o(1)\sup_{0\leq s\leq mT_0} \Vert e^{\lambda s}w f(s)\Vert_{L^\infty_{x,v}} + C(T_0) \mathcal{R}_{mT_0} \Big]\times \sum_{i=0}^{m-1}  e^{-\frac{i(1-4\lambda)T_0}{4}} \notag\\
  & \leq  o(1)e^{-\lambda m T_0} \sup_{0\leq s\leq mT_0}\Vert e^{\lambda s}wf(s)\Vert_{L^\infty_{x,v}} + C(T_0)e^{-\lambda m T_0} \mathcal{R}_{mT_0}. \label{mT0}
\end{align}
In the fourth line, we have applied the same computation as \eqref{l2_term} to the $L^2$ term.

For any $t>0$, we can choose $m$ such that $mT_0\leq t\leq (m+1)T_0$. With $t=mT_0+s$, $0\leq s\leq T_0$, we apply \eqref{est_t} to have
\begin{align}
  & \Vert wf(t) \Vert_{L^\infty_{x,v}}  = \Vert wf(mT_0 + s)\Vert_{L^\infty_{x,v}} \notag\\
  & \leq (5+5C_\beta)e^{\frac{- s}{2}}\Vert wf(mT_0)\Vert_{L^\infty_{x,v}}  + o(1)e^{-\lambda s} \sup_{0\leq s'\leq s} \Vert e^{\lambda s'}wf(mT_0+s')\Vert_{L^\infty_{x,v}} \notag\\
  &+ C(T_0) e^{-\lambda s}\sup_{0\leq s'\leq s}   \Vert e^{2\lambda s'}w g(mT_0+s') \Vert_{L^\infty_{x,v}} + C(T_0) e^{-\lambda s} \sup_{0\leq s'\leq s}e^{\lambda s'}\Vert f(mT_0+s')\Vert_{L^2_{x,v}} \notag\\
  & \leq o(1)(5+5C_\beta)e^{-\lambda (m T_0+s)} \sup_{0\leq s\leq mT_0}\Vert e^{\lambda s}wf(s)\Vert_{L^\infty_{x,v}} + C(T_0)e^{-\lambda (m T_0+s)} \mathcal{R}_{mT_0+s} \notag\\
  & \leq o(1)e^{-\lambda t} \sup_{0\leq s\leq t}\Vert e^{\lambda s}wf(s)\Vert_{L^\infty_{x,v}} + C(T_0) e^{-\lambda t} \mathcal{R}_t. \label{f_t_bdd}
\end{align}
In the fourth line, we have applied \eqref{mT0} and \eqref{l2_term} to the $L^2$ term.

Since \eqref{f_t_bdd} holds for all $t$, we conclude that
\begin{align}
  e^{\lambda t}\Vert wf(t)\Vert_{L^\infty_{x,v}}  & \leq C(T_0) e^{-\lambda t}\big[\Vert wf_0\Vert_{L^\infty_{x,v}} + \sup_{0\leq s\leq t}\Vert e^{2\lambda s}w g(s)\Vert_{L^\infty_{x,v}}\big]. \label{apriori}
\end{align}

We conclude the a-priori estimate. To establish the existence of the solution, we will use the following approximating sequence:
\begin{align}
\begin{cases}
        &  \p_t f^{\ell+1} + v\cdot \nabla_x f^{\ell+1} + f^{\ell+1} = \mathbf{P}f^\ell + g, \ f^{\ell+1}(0,x,v) = f_0(x,v) \\
    &  f^{\ell+1}|_{\gamma_-} = (1-\frac{1}{j})c_\mu\sqrt{\mu(v)} \int_{n(x)\cdot u>0} f^{\ell}(u)\sqrt{\mu(u)}(n(x)\cdot u) \dd u.
\end{cases} \label{cauchy_linear}
\end{align}
By employing a similar argument using the method of characteristic, one can show that $f^{\ell}$ forms a Cauchy sequence in the $L^\infty$ space. This leads to the existence of a solution $f$ that satisfies \eqref{apriori}. The uniqueness follows in a similar way. For conciseness, we do not present the detail of such computation, we refer to a detailed argument in Proposition 7.1 of \cite{EGKM}.

We conclude the proof of Proposition \ref{prop:linfty}. 
\end{proof}

\subsection{\textbf{Proof of Theorem \ref{thm:linfty}}}\label{sec:thm_proof}

We consider the following iteration sequence:
\begin{align*}
\begin{cases}
& \p_t f^{\ell+1} + v\cdot \nabla_x f^{\ell+1} + \mathcal{L} f^{\ell+1} = \Gamma(f^\ell), \ f^{\ell+1}(0,x,v) = f_0(x,v), \\
&  f^{\ell+1}|_{\gamma_-} = c_\mu\sqrt{\mu(v)}\int_{n(x)\cdot u>0} f^{\ell+1}\sqrt{\mu(u)}(n(x)\cdot u) \dd u.    
\end{cases}
\end{align*}
The initial sequence is defined as $f^0 = 0$. With the assumption on the initial condition $\Vert wf_0\Vert_{L^\infty_{x,v}}<\delta$, we apply Proposition \ref{prop:linfty} to conclude that for $\ell=0$, there exists a unique solution $f^1$ such that
\begin{align*}
    &    \sup_{0\leq s\leq t}\Vert e^{\lambda s}wf^{1}\Vert_{L^\infty_{x,v}} \leq \delta.
\end{align*}
Inductively, we assume $\sup_{0\leq s\leq t}\Vert e^{\lambda s}w f^\ell\Vert_{L^\infty_{x,v}} \leq 2C\delta$. Then the condition in Lemma \ref{lemma:gamma_property} is satisfied. Moreover, from Lemma \ref{lemma:Pgamma}, we have $\mathbf{P}(\Gamma(f^\ell))=0$, thus the condition of Proposition \ref{prop:linfty} is also satisfied.

We apply Proposition \ref{prop:linfty} to conclude that there is a unique solution $f^{\ell+1}$ such that
\begin{align*}
  \sup_{0\leq s\leq t} \Vert e^{\lambda s} wf^{\ell+1}\Vert_{L^\infty_{x,v}}  & \leq C\Vert wf_0\Vert_{L^\infty_{x,v}} + C\Big[ \sup_{0\leq s\leq t} \Vert e^{\lambda s}w f^{\ell}\Vert_{L^\infty_{x,v}}^2 + \sup_{0\leq s\leq t} \Vert e^{\lambda s}w f^{\ell}\Vert_{L^\infty_{x,v}}^3 \Big]  .
\end{align*}
Here we have used Lemma \ref{lemma:gamma_property} for $1\leq i\leq 3$ to obtain the following estimates:
\begin{align*}
 \sup_{0\leq s\leq t}\Vert  e^{2\lambda s} w \Gamma_i(f^\ell) \Vert_{L^\infty_{x,v}} \lesssim  \sup_{0\leq s\leq t} \Vert e^{\lambda s}w f^{\ell} \Vert_{L^\infty_{x,v}}^2 ,
\end{align*}
and
\begin{align*}
    &   \sup_{0\leq s\leq t}\Vert e^{2\lambda s}w\Gamma_4(f^\ell) \Vert_{L^\infty_{x,v}}  \lesssim \sup_{0\leq s\leq t} \Vert e^{\lambda s} w f^\ell \Vert_{L^\infty_{x,v}}^3.
\end{align*}

We take $\Vert wf_0\Vert_{L^\infty_{x,v}}<\delta$ to be small enough such that $2C\delta \ll 1$, then with $\Vert e^{\lambda s}w f^\ell\Vert_{L^\infty_{x,v}} \leq 2C\delta$, we further derive that
\begin{align*}
    \sup_{0\leq s\leq t} \Vert e^{\lambda s} wf^{\ell + 1} \Vert_{L^\infty_{x,v}} \leq C\delta + 4C^2\delta^2 + 8C^3 \delta^3 \leq 2C\delta.
\end{align*}
Hence by induction argument, we conclude the uniform-in-$\ell$ estimate:
\begin{equation}\label{uniform-in-ell}
\sup_{\ell} \sup_{0\leq s\leq t} \Vert e^{\lambda s} w f^{\ell} \Vert_{L^\infty_{x,v}} \leq 2C\delta.
\end{equation}

Next, we take the difference $f^{\ell+1}-f^\ell$. The equation of $f^{\ell+1}-f^\ell$ becomes
\begin{align*}
\begin{cases}
    & \p_t (f^{\ell+1} - f^\ell) + v\cdot \nabla_x (f^{\ell+1}-f^\ell) + \mathcal{L}(f^{\ell+1}-f^\ell) = \Gamma(f^\ell) - \Gamma(f^{\ell-1}), \\
    & f^{\ell+1}(0,x,v) - f^\ell(0,x,v) = 0, \\
    & [f^{\ell+1} - f^\ell]|_{\gamma_-} = c_\mu\sqrt{\mu(v)}\int_{n(x)\cdot u>0} [f^{\ell+1} - f^{\ell}] \sqrt{\mu(u)}(n(x)\cdot u)\dd u.
\end{cases}
\end{align*}

We apply Proposition \ref{prop:linfty} to have 
\begin{align*}
  &\sup_{0\leq s\leq t}\Vert e^{\lambda s} w (f^{\ell+1}-f^\ell)\Vert_{L^\infty_{x,v}}    \leq C \sup_{0\leq s\leq t}\Vert e^{2\lambda s} w[\Gamma(f^\ell) - \Gamma(f^{\ell-1}) ]\Vert_{L^\infty_{x,v}} \\
  & \lesssim  \delta \sup_{0\leq s\leq t} \Vert e^{\lambda s}w(f^\ell-f^{\ell-1})\Vert_{L^\infty_{x,v}} .
\end{align*}
In the second line, we have applied the estimate in Lemma \ref{lemma:nonlinear_substraction} to $\Gamma(f^\ell)-\Gamma(f^{\ell-1})$. The condition in Lemma \ref{lemma:nonlinear_substraction} is satisfied due to the uniform-in-$\ell$ estimate \eqref{uniform-in-ell}.

Thus for some constant $C_1$, we have
\begin{align*}
    &   \sup_{0\leq s\leq t}\Vert e^{\lambda s} w (f^{\ell+1}-f^\ell)\Vert_{L^\infty_{x,v}}  \leq C_1 \delta \sup_{0\leq s\leq t} \Vert e^{\lambda s}w(f^\ell-f^{\ell-1})\Vert_{L^\infty_{x,v}} .
\end{align*}
We choose $\delta\ll 1$ such that $C_1\delta < 1$. Then $f^{\ell}$ is a Cauchy sequence, and we construct a solution $f$ to \eqref{f_eqn} such that for all $t>0$, 
\begin{align}
    \Vert e^{\lambda t}wf(t)\Vert_{L^\infty_{x,v}} \leq 2C\delta. \label{f_est}
\end{align}

To prove the uniqueness, we let $f$ and $g$ be two solutions to \eqref{f_eqn} such that $\Vert e^{\lambda t}wf(t)\Vert_{L^\infty_{x,v}}, \Vert e^{\lambda t}wg(t)\Vert_{L^\infty_{x,v}} \leq 2C\delta$. The equation of $f-g$ satisfies
\begin{align*}
\begin{cases}
    &   \p_t (f-g) + v\cdot \nabla_x (f-g) + \mathcal{L}(f-g) = \Gamma(f) - \Gamma(g), \\
    & f(0,x,v)-g(0,x,v) = 0, \\
    & [f-g]|_{\gamma_-} = c_\mu \sqrt{\mu(v)} \int_{n(x)\cdot u>0} [f-g]\sqrt{\mu(u)}(n(x)\cdot u) \dd u.
\end{cases}
\end{align*}
Applying Proposition \ref{prop:linfty}, we have
\begin{align*}
    &   \sup_{0\leq s\leq t}\Vert e^{\lambda s}w(f-g)\Vert_{L^\infty_{x,v}} \leq C_1\delta \sup_{0\leq s\leq t}\Vert e^{\lambda s}w(f-g)\Vert_{L^\infty_{x,v}}.
\end{align*}
Since $C_1\delta<1$, we conclude that $\sup_{0\leq s\leq t}\Vert e^{\lambda s}w(f-g)\Vert_{L^\infty_{x,v}} = 0$, thus $f=g$. We complete the well-posedness.

\textit{Positivity.} Finally, we prove that the unique solution $f$ satisfies $F=\mu+\sqrt{\mu}f\geq 0$. We use a different sequence
\begin{align*}
\begin{cases}
    & \p_t F^{\ell+1} + v\cdot \nabla_x F^{\ell+1} = \nu^{\ell}(M(F^{\ell})-F^{\ell+1}), \\
    & F^{\ell+1}|_{\gamma_-} = c_\mu\mu(v) \int_{n(x)\cdot u>0} F^{\ell}(n(x)\cdot u) \dd u, \\
    & F^{\ell+1}(0,x,v) = F_0(x,v), \ F^{0} = F_0(x,v).    
\end{cases}
\end{align*}
Clearly, such an iteration preserves positivity. In the perturbation $F^{\ell} = \mu + \sqrt{\mu}f^{\ell}$, the equation of $f^{\ell+1}$ reads
\begin{align*}
\begin{cases}
    &  \p_t f^{\ell+1} + v\cdot \nabla_x f^{\ell+1} + \nu^{\ell} f^{\ell+1} =  \mathbf{P}f^\ell + \Gamma_1(f^\ell) + \Gamma_3(f^\ell) + \Gamma_4(f^\ell), \\
    & f^{\ell+1}|_{\gamma_-} = c_\mu\sqrt{ \mu(v)} \int_{n(x)\cdot u>0} f^\ell (n(x)\cdot u)\sqrt{\mu(u)} \dd u, \\
    & f^{\ell+1}(0,x,v) = f_0(x,v), \ f^{0} = f_0(x,v).
\end{cases}
\end{align*}

We prove the following claim: there exists $T^* \ll 1$ such that if the initial condition satisfies $\Vert wf_0\Vert_{L^\infty_{x,v}}<2C\delta$, and $\sup_{i\leq \ell}\sup_{t\leq T^*}\Vert wf^i(t)\Vert_{L^\infty_{x,v}} < 4C\delta \ll 1$, then it holds
\begin{align*}
    &  \sup_{t\leq T^*}\Vert wf^{\ell+1}(t)\Vert_{L^\infty_{x,v}} < 4C\delta \ll 1.
\end{align*}
Here the constant $C$ is constructed in \eqref{f_est}.

\begin{proof}[\text{Proof of claim}]
When $\Vert wf^\ell\Vert_{L^\infty_{x,v}} \ll 1$, from \eqref{macro_control}, the damping factor satisfies $\nu^{\ell}>\frac{1}{2}$. With the estimate of the nonlinear operator $\Gamma_1,\Gamma_3,\Gamma_4$ in Lemma \ref{lemma:gamma_property}, one can employ a similar argument (proof of Proposition \ref{prop:linfty}) and obtain
\begin{align*}
    &   \sup_{t\leq T^*}\Vert wf^{\ell+1}(t)\Vert_{L^\infty_{x,v}} \leq \Vert wf_0\Vert_{L^\infty_{x,v}} + T^* C_1T_0^{5/4}\sup_{i\leq \ell}\sup_{t\leq T^*}\{\Vert wf^i\Vert_{L^\infty_{x,v}}+ \Vert wf^{i}\Vert_{L^\infty_{x,v}}^2 + \Vert wf^i\Vert_{L^\infty_{x,v}}^3\}.
\end{align*}
Here we emphasize that we do not derive the $L^2_{x,v}$ for $\mathbf{P}f^{\ell}$, and directly control such term in the $L^\infty_{x,v}$ estimate using the small time integration $\int_0^{T^*}$. The term $C_1 T_0^{5/4}$ corresponds to the repeat interaction with the boundary in the application of Lemma \ref{lemma:tk}.

By choosing $T^*$ small enough such that 
\begin{align*}
    & \sup_{t\leq T^*}\Vert wf^{\ell +1}\Vert_{L^\infty_{x,v}} \leq \Vert wf_0\Vert_{L^\infty_{x,v}} + \frac{1}{10}\sup_{i\leq \ell}\sup_{t\leq T^*}\{\Vert wf^i\Vert_{L^\infty_{x,v}}+ \Vert wf^{i}\Vert_{L^\infty_{x,v}}^2 + \Vert wf^i\Vert_{L^\infty_{x,v}}^3\} < 4C\delta,
\end{align*}
we conclude the claim.

\end{proof}

Since the $f^0 = f_0$ satisfies the assumption $\Vert wf^0\Vert_{L^\infty_{x,v}}<\delta <2C\delta$, the above claim implies the uniform in $\ell$ estimate: $\sup_{\ell<\infty}\sup_{t\leq T^*}\Vert wf^{\ell}(t)\Vert_{L^\infty_{x,v}} < 2C\delta \ll 1$. The subtraction $f^{\ell+1} - f^{\ell}$ satisfies the equation
\begin{align*}
    \begin{cases}
        &   \p_t (f^{\ell+1}-f^\ell) + v\cdot \nabla_x (f^{\ell+1} - f^\ell ) + \nu^\ell (f^{\ell+1}-f^\ell) = (\nu^{\ell-1}-\nu^{\ell})f^{\ell} + \mathbf{P}(f^\ell - f^{\ell-1}) \\
        & + \Gamma_1(f^\ell) - \Gamma_1(f^{\ell -1})+ \Gamma_3(f^\ell) - \Gamma_3(f^{\ell -1})+ \Gamma_4(f^\ell) - \Gamma_4(f^{\ell -1}), \\
    & [f^{\ell+1}-f^{\ell}]|_{\gamma_-} = c_\mu\sqrt{ \mu(v)} \int_{n(x)\cdot u>0} [f^\ell-f^{\ell-1}] (n(x)\cdot u)\sqrt{\mu(u)} \dd u, \\
    & [f^{\ell+1}-f^{\ell}](0,x,v) = 0.
    \end{cases}
\end{align*}
With the estimate to the difference of the nonlinear operator in Lemma \ref{lemma:nonlinear_substraction}, we apply a similar argument as the claim above and conclude
\begin{align*}
     \sup_{t\leq T^*}\Vert w(f^{\ell+1}-f^\ell)\Vert_{L^\infty_{x,v}}& \leq  T^* C_1T_0^{5/4}(1+C\delta)\max_{i\leq \ell}\sup_{t\leq T^*}\Vert w(f^i-f^{i-1})\Vert_{L^\infty_{x,v}} \\
    &\leq \frac{1}{5}\max_{i\leq \ell}\sup_{t\leq T^*}\Vert w(f^i-f^{i-1})\Vert_{L^\infty_{x,v}}.
\end{align*}
Therefore, $f^{\ell}$ forms a Cauchy sequence in the $L^\infty_{x,v}$ space. By the uniqueness, we conclude the positivity on $[0,T^*]$. Since the unique solution is proved to satisfy $\Vert wf(t)\Vert_{L^\infty_{x,v}}\leq 2Ce^{-\lambda t}\delta$ in \eqref{f_est}, for $[T^*,2T^*],[2T^*,3T^*]...$, we apply the same induction argument as in the proof of claim. Here the only difference is that the initial condition becomes $f^{\ell+1}(nT^*,x,v) = f(nT^*,x,v)$. Since the assumption on the initial condition is still satisfied, we conclude the positivity for any $[nT^*,(n+1)T^*]$. We complete the proof.

\appendix

\section{Proof of Lemma \ref{lemma:L2}}

We derive the $L^2$ dissipation estimate of the macroscopic quantities $a(t,x),\mathbf{b}(t,x),c(t,x)$ using special test functions with the following weak formulation to~\eqref{linear_f_lambda}, here we emphasize that these variables only depend on $t$ and $x$. This method was proposed by \cite{EGKM,EGKM2} for the Boltzmann equation.
\begin{equation}\label{weak_formula_2}
    \begin{split}
        & - \int_0^t \iint_{\O\times \mathbb{R}^3} v\cdot \nabla_x \psi f \dd x \dd v \dd s   \\
    & = \iint_{\O\times \mathbb{R}^3}\{-\psi f(t) + \psi f(0)\} \dd x \dd v +  \int_0^t \iint_{\O\times \mathbb{R}^3} f \p_t \psi \dd x \dd v  \dd s  - \int_0^t \int_{\gamma} \psi f  \dd \gamma  \dd s   \\
    & -  \int_0^t  \iint_{\O\times \mathbb{R}^3} \mathcal{L} f \psi  \dd x \dd v \dd s + \int_0^t \iint_{\O\times \mathbb{R}^3} g \psi \dd  x \dd v \dd s  \\
    & := \{G_\psi(t) - G_\psi(s)\} + J_1 + J_2 + J_3 + J_4 .   
    \end{split}
\end{equation}

\textbf{Step 1: estimate of $c(t,x)$.}

We choose a test function as $\psi_c$ be a solution to the following problem
\begin{align}
\psi  & :=\psi_c = v\cdot \nabla_x \phi_c (|v|^2-5)\mu^{1/2}, \notag \\
& -\Delta \phi_c    = c \text{ in } \O  , \  \phi_c  = 0 \text{ on }\p\O. \label{poisson_c}
\end{align}

From a direct computation, the contribution of $\mathbf{b}$ vanishes from the oddness, and the contribution of $a$ vanishes from the orthogonality of $v(|v|^2-5)\mu^{1/2}\perp \ker \mathcal{L}$. The LHS of~\eqref{weak_formula_2} becomes
\begin{align}
  LHS  &  = 5 \int_0^t \int_{\O} c^2 \dd x\dd s - \sum_{i,j=1}^3 \int_0^t  \int_{\O} \p_{ij}^2 \phi_c  \langle (\mathbf{I}-\mathbf{P})f, v_iv_j (|v|^2-5)\mu^{1/2}\rangle \dd x \dd s \notag\\
  & = 5 \int_0^t\int_\O c^2 \dd x \dd s  + E_1, \label{LHS_c}
\end{align}
where, for any $\delta_1>0$, from the elliptic estimate to \eqref{poisson_c},
\[|E_1|\lesssim \delta_1\int_0^t \Vert c\Vert_{L^2_x}^2\dd s + \frac{1}{ \delta_1}\int_0^t \Vert \mu^{1/4}(\mathbf{I}-\P)f\Vert_{L^2_{x,v}}^2 \dd s.\]
For $J_1$, we denote $\Phi_c$ as the elliptic equation
\begin{align*}
    & -\Delta \Phi_c = \p_t c \text{ in }\O, \ \Phi_c = 0 \text{ on } \p\O.
\end{align*}
Integration by part leads to
\begin{align}
    &    \int_0^t \int_\O |\nabla_x \Phi_c|^2 \dd x \dd s = \int_0^t \int_{\O} \p_t c(s,x)\Phi_c \dd x \dd s. \label{poisson_p_t_c}
\end{align}
Denote $\Lambda_{j}(f):= \frac{1}{10}((|v|^2-5)v_j\sqrt{\mu},f)_v$. From the conservation of energy, we have
\begin{align}
    & \p_t c + \frac{1}{3}\nabla_x \cdot \mathbf{b} + \frac{1}{6} \nabla \cdot  \Lambda((\mathbf{I}-\mathbf{P})f)  = 0. \notag
\end{align}
Then \eqref{poisson_p_t_c} becomes
\begin{align}
    &    \int_0^t \int_\O \p_t c(s,x) \Phi_c \dd x \dd s  = \int_0^t \int_\O \Big[ -\frac{1}{3}\mathbf{b}\cdot \nabla_x \Phi_c - \frac{1}{6} \Lambda((\mathbf{I}-\mathbf{P})f)\cdot \nabla_x \Phi_c \Big] \dd x \dd s \notag \\
    &- \int_0^t \int_{\p\O}  \Big(\frac{1}{3}(\mathbf{b}\cdot n) \Phi_c + \frac{1}{6} (\Lambda((\mathbf{I}-\mathbf{P})f)\cdot n )\Phi_c\Big) \dd S_x \dd s. \label{energy_p_t_c_2}
\end{align}
The boundary term vanishes from the boundary condition $\Phi_c(x) = 0$ on $x\in \p\O$.
The other term in \eqref{energy_p_t_c_2} is controlled as
\begin{align*}
& o(1)\int_0^t \Vert \nabla_x \Phi_c \Vert_{L^2_x}^2 \dd s + \int_0^t \Vert \mathbf{b}\Vert_{L^2_x}^2 \dd s + \int_0^t \Vert (\mathbf{I}-\mathbf{P})f\Vert^2_{L^2_{x,v}} \dd s.
\end{align*}
Plugging the estimates to \eqref{poisson_p_t_c}, we obtain
\begin{align}
    &   \int_0^t \Vert \nabla_x \Phi_c\Vert^2_{L^2_x}\dd s  \lesssim   \int_0^t \Vert \mathbf{b}\Vert_{L^2_x}^2 \dd s + \int_0^t \Vert (\mathbf{I}-\mathbf{P})f\Vert^2_{L^2_{x,v}} \dd s   .   \notag
\end{align}

Thus we compute $J_1$ as
\begin{align}
  |J_1|  &  \lesssim  \delta_1\int_0^t \Vert \nabla_x\Phi_c \Vert^2_{L^2_x} \dd s  + \frac{1}{\delta_1} \int_0^t \Vert (\mathbf{I}-\P)f\Vert^2_{L^2_{x,v}}  \dd s  \lesssim  \delta_1 \int_0^t \Vert \mathbf{b}\Vert_{L^2_x}^2\dd s + \frac{1}{\delta_1} \int_0^t \Vert (\mathbf{I}-\P)f\Vert^2_{L^2_{x,v}}  . \label{J1_c_bdd}
\end{align}

Next, we apply boundary condition of $\phi_c$ and $f$ to compute $J_2$:
\begin{align*}
  \int_\gamma \psi f \dd \gamma  & = \int_{\gamma_+} \psi f \dd \gamma  +  \int_{\gamma_-} \psi f \dd \gamma.
\end{align*}
We have
\begin{align}
 & \int_{\p\O} \bigg[ \int_{n(x)\cdot v>0} + \int_{n(x)\cdot v<0}\bigg] \big(|v|^2 -5\big)\sqrt{\mu} (v\cdot \nabla_x \phi_c) (n\cdot v) f  \dd v \dd S_x \notag \\
 & = \int_{\p\O} \int_{n(x)\cdot v>0} \big(|v|^2 -5 \big) \sqrt{\mu} (v\cdot \nabla_x \phi_c) (n\cdot v)(f-P_\gamma f) \dd v \dd S_x \notag\\
  & + 2 \int_{\p\O}\int_{n(x)\cdot v>0} \big(|v|^2 -5 \big) \sqrt{\mu} |n(x)\cdot v|^2 (n(x)\cdot \nabla_x \Phi_c) P_\gamma f \dd v \dd S_x \notag    \\
  & = \int_{\p\O} \int_{n(x)\cdot v>0} \big(|v|^2 -5 \big) \sqrt{\mu} (v\cdot \nabla_x \phi_c) (n\cdot v)(f-P_\gamma f) \dd v \dd S_x \notag \\
  & \lesssim \delta_1 \Vert \nabla_x \Phi_c\Vert_{L^2(\p\O)}^2 +  \frac{1}{\delta_1} |(I-P_\gamma)f|_{2,+}^2  \lesssim \delta_1 \Vert c\Vert_{L^2_x}^2 + \frac{1}{\delta_1}|(I-P_\gamma)f|_{2,+}^2   .\notag
\end{align}
In the first equality, we have applied the change of variable $v\to v-2(n\cdot v)n$. The third line vanishes by $\int_{n(x)\cdot v>0} (|v|^2-5)(n\cdot v)^2\mu \dd v= 0$. In the last inequality, we applied elliptic estimate to~\eqref{poisson_c} with the trace theorem:
\begin{align*}
    & \Vert \nabla_x \Phi_c \Vert^2_{L^2(\p\O)} \lesssim \Vert \phi_c \Vert^2_{H^2_x} \lesssim \Vert c\Vert_{L^2_x}^2.
\end{align*}

We conclude the estimate for $J_2$ as
\begin{equation}\label{J2_c_bdd}
|J_2| \lesssim \delta_1 \int_0^t \Vert c\Vert_{L^2_x}^2 \dd s + \frac{1}{\delta_1}\int_0^t |(I-P_\gamma)f|_{2,+}^2  \dd s.
\end{equation}

For $J_3$, due to the exponential decay factor $\mu^{1/2}$ in $\phi_c$, we have
\begin{align}
  |J_3|  &  \lesssim  \delta_1 \int_0^t \Vert c\Vert_{L^2_x}^2 \dd s + \frac{1}{\delta_1}\int_0^t \Vert \mu^{1/4} \mathcal{L} (\mathbf{I}-\mathbf{P})f\Vert_{L^2_{x,v}}^2 \dd s \notag\\
  &\lesssim  \delta_1 \int_0^t \Vert c\Vert_{L^2_x}^2 \dd s + \frac{1}{\delta_1} \int_0^t \Vert (\mathbf{I}-\mathbf{P})f\Vert_{L^2_{x,v}}^2 \dd s . \label{J3_c_bdd}
\end{align}
Here we applied the elliptic estimate to $\nabla_x \phi_c$.

For $J_4$, similar to the computation in~\eqref{J3_c_bdd}, we have
\begin{align}
  \Big|\int_0^t \iint_{\O\times \mathbb{R}^3} g\psi_c\dd x \dd v \dd s\Big|  & \lesssim \delta_1 \int_0^t \Vert c\Vert_{L^2_x}^2 \dd s + \frac{1}{\delta_1} \int_0^t \Vert g\Vert_{L^2_{x,v}}^2 \dd s. \label{J4_c_bdd}
\end{align}

Collecting~\eqref{LHS_c}, \eqref{J1_c_bdd}, \eqref{J2_c_bdd}, \eqref{J3_c_bdd} and \eqref{J4_c_bdd}, we conclude the estimate of $c$ as follows: for some $C_1>0$ and $G_c(t):= \int_\O \int_{\mathbb{R}^3} \psi_c f(t)\dd x \dd v$,
\begin{align}
  \int_0^t \Vert c\Vert_{L^2_x}^2 \dd s & \leq C_1\Big[ G_c(t) - G_c(0) + \delta_1 \int_0^t \Vert \mathbf{b}\Vert_{L^2_x}^2 \dd s + \frac{1}{\delta_1} \int_0^t \Vert (\mathbf{I}-\mathbf{P})f\Vert_{L^2_{x,v}}^2 \dd s \notag\\
  & + \frac{1}{\delta_1}\int_0^t |(I-P_\gamma)f|_{2,+}^2 \dd s +  \frac{1}{\delta_1} \int_0^t \Vert g\Vert_{L^2_{x,v}}^2 \dd s\Big]. \label{c_bdd}
\end{align}

\textbf{Step 2: estimate of $\mathbf{b}(t,x)$.}
We use the weak formulation in~\eqref{weak_formula_2} for the estimate of $\mathbf{b}$. 

First, we estimate $b_1$. We choose a test function as
\begin{align}
    & \psi_1 =  \frac{3}{2}\Big(|v_1|^2 - \frac{|v|^2}{3} \Big)\sqrt{\mu} \p_{x_1} \phi_1  + v_1v_2 \sqrt{\mu} \p_{x_2} \phi_1 + v_1 v_3 \sqrt{\mu} \p_{x_3} \phi_1. \notag
\end{align}

We let $\phi_1$ satisfy the elliptic system
\begin{align}
\begin{cases}
        & -\p_{x_1}^2 \phi_1  -\Delta\phi_1 = b_1 \text{ in }\O, \\
    &  \phi_1 = 0 \text{ on } x\in \p\O. 
\end{cases}\label{phi_b_k}
\end{align}
From a direct computation, the contribution of $a$ and $c$ vanish from the oddness, and the LHS of~\eqref{weak_formula_2} becomes
\begin{align*}
    &    -\int_0^t \int_{\mathbb{R}^3}\int_{\O} \Big[\frac{3}{2}v_1^2 \Big(|v_1|^2 - \frac{|v|^2}{3} \Big) \mu \p_{11} \phi_1 b_1 + v_1^2v_2^2 \mu \p_{12} \phi_1 b_2  + v_1^2 v_3^2  \mu\p_{13} \phi_1 b_3 \\
    & + v_1^2 v_3^2 \mu \p_{33} \phi_1 b_1 + v_1^2 v_2^2 \mu \p_{22}\phi_1 b_1 + \frac{3}{2}v_2^2\Big(|v_1|^2-\frac{|v|^2}{3}\Big) \mu \p_{12}\phi_1 b_2  + \frac{3}{2} v_3^2 \Big( |v_1|^2 - \frac{|v|^2}{3}\Big) \mu \p_{13} \phi_1 b_3\Big] \dd x \dd v\dd s + E_2\\
    & = -\int_0^t \int_{\mathbb{R}^3}\int_\O [2 \p_{11}\phi_1 b_1+ \p_{22}\phi_1 b_1 +  \p_{33}\phi_1 b_1 + \p_{12}\phi_1b_2 - \p_{12}\phi_1 b_2 +  \p_{13}\phi_1 b_3  - \p_{13}\phi_1 b_3 ] \dd x \dd v \dd s + E_2  \\
    &= - \int_0^t (\Delta \phi_1 + \p_{11}\phi_1) b_1 \dd x \dd s +E_2= \int_0^t \Vert b_1\Vert_{L^2_x}^2 \dd s + E_2.
 \end{align*}

For $E_{2}$, using the elliptic estimate, we obtain that for any $\delta_2\ll 1$,
\begin{align*}
    &  |E_{2}| \lesssim \delta_2 \int_0^t \Vert b_1\Vert_{L^2_x}^2 \dd s + \frac{1}{\delta_2} \int_0^t \Vert \mu^{1/4} (\mathbf{I}-\mathbf{P})f\Vert^2_{L^2_{x,v}}  \dd s   .
\end{align*}

For $J_1$, we let $\Phi_1$ satisfy the elliptic equation
\begin{align*}
    &  -\p_{x_1}^2 \Phi_1 - \Delta \Phi_1 = \p_t b_1 \text{ in }\O, \ \Phi_1 = 0 \text{ on }x\in \p\O.
\end{align*}
Integration by part leads to
\begin{align}
    &   \int_0^t \int_\O [2|\p_{x_1}\Phi_1|^2 + |\p_{x_2}\Phi_1|^2 + |\p_{x_3} \Phi_1|^2 ] \dd x \dd s    = \int_0^t \int_\O \p_t b_1 \Phi_1 \dd x \dd s.   \label{energy_p_t_b}
\end{align}
Denote $\Theta_{ij}(f):= ((v_iv_j-1)\sqrt{\mu},f)_v$. From the conservation of momentum, we have
\begin{align}
    & \p_t b_1 + \p_{x_1}(a+2c) + \nabla_x \cdot \Theta_{1}((\mathbf{I}-\mathbf{P})f)  = 0.   \notag
\end{align}
Then \eqref{energy_p_t_b} becomes
\begin{align}
    &    \int_0^t \int_\O \p_t b_1 \Phi_1 \dd x \dd s  = \int_0^t \int_\O  \Big[ (a+2c)\p_{x_1}\Phi_1 + \Theta_1((\mathbf{I}-\mathbf{P})f)\cdot \nabla_x \Phi_1 \Big] \dd x \dd s \notag \\
    & - \int_0^t  \int_{\p\O}  \Phi_1 (a+2c)n_1 + \Phi_1 (\Theta_{1}((\mathbf{I}-\mathbf{P})f)\cdot n) \dd S_x \dd s. \label{energy_p_t_b_2}
\end{align}
The boundary term vanishes from the boundary condition $\Phi_1(x) = 0,x\in \p\O$.

The other term in \eqref{energy_p_t_b_2} is controlled as
\begin{align*}
    & o(1)\int_0^t \Vert \nabla_x \Phi_1\Vert_{L^2_x}^2 \dd s + \int_0^t [\Vert a\Vert_{L^2_x}^2 + \Vert c\Vert_{L^2_x}^2 + \Vert (\mathbf{I}-\mathbf{P})f\Vert_{L^2_{x,v}}^2] \dd s.
\end{align*}
Plugging this estimate to \eqref{energy_p_t_b}, we obtain
\begin{align}
    &  \int_0^t \Vert \nabla_x \Phi_1\Vert_{L^2_x}^2 \dd s  \lesssim \int_0^t [\Vert a\Vert_{L^2_x}^2 + \Vert c\Vert_{L^2_x}^2 + \Vert (\mathbf{I}-\mathbf{P})f\Vert_{L^2_{x,v}}^2] \dd s.  \label{Phi_b_estimate}
\end{align}

$J_1$ can be computed using the estimate \eqref{Phi_b_estimate}:
\begin{align*}
    &   |J_1| \lesssim \delta_2 \int_0^t \Vert \nabla_x \Phi_1 \Vert_{L^2_x}^2 \dd s + \int_0^t \Vert (\mathbf{I}-\mathbf{P})f\Vert_{L^2_{x,v}}^2 \dd s \\ 
    & \lesssim   \delta_2 \int_0^t [\Vert a\Vert^2_{L^2_x} + \Vert c\Vert_{L^2_x}^2]  + \int_0^t \Vert c\Vert^2_{L^2_x} \dd s + \int_0^t \Vert (\mathbf{I}-\mathbf{P})f\Vert_{L^2_{x,v}}^2 \dd s   . 
\end{align*}

Next we compute the boundary integral $J_2$ using the diffuse boundary condition:
\begin{align*}
    & \int_{\mathbb{R}^3} \int_{\p\O} f\psi_1 (n(x)\cdot v) \dd S_x \dd v =  \int_{\mathbb{R}^3}\int_{\p\O} P_\gamma f \psi_1 (n(x)\cdot v) \dd S_x \dd v + \int_{\p\O}\int_{n(x)\cdot v>0} (I-P_\gamma)f \psi_1 (n(x)\cdot v) \dd v \dd S_x \\
    &  \lesssim |(I-P_\gamma)f|_{L^2_{\gamma_+}}^2 + o(1) |\nabla_x \Phi_1|_{L^2(\p\O)}^2  \lesssim |(I-P_\gamma)f|_{L^2_{\gamma_+}}^2 + o(1)\Vert \phi\Vert^2_{H^2_x} \lesssim |(I-P_\gamma)f|_{L^2_{\gamma_+}}^2 + o(1)\Vert b_1\Vert_{L^2_{x}}^2 .
\end{align*}
In the second line, the contribution of $P_\gamma f$ vanished due to oddness. In the third line, we applied the trace theorem with the Poincare inequality.

$J_3$ and $J_4$ are bounded in a similar manner as \eqref{J3_c_bdd} and \eqref{J4_c_bdd}:
\begin{align*}
    &  |J_3| \lesssim \delta_2  \int_0^t \Vert b_1\Vert^2_{L^2_x} + \frac{1}{\delta_2} \int_0^t \Vert (\mathbf{I} - \mathbf{P})f\Vert^2_{L^2_{x,v}} \dd s, \\
    & |J_4| \lesssim \delta_2 \int_0^t \Vert b_1 \Vert^2_{L^2_x} \dd s  + \frac{1}{\delta_2} \int_0^t \Vert g\Vert^2_{L^2_{x,v}} \dd s.
\end{align*}

For $G_{b_1}(t):= \int_\O \int_{\mathbb{R}^3} \psi_1 f(t)\dd x \dd v$, we conclude the following for $b_1$:
\begin{align}
     \int_0^t \Vert b_1\Vert_{L^2_x}^2 \dd s&      \lesssim G_{b_1}(t)-G_{b_1}(0) + \delta_2 \int_0^t \Vert (a,b_1,c)\Vert^2_{L^2_x}  \notag \\
    &+ \frac{1}{\delta_2} \int_0^t \Vert (\mathbf{I}-\mathbf{P})f\Vert_{L^2_{x,v}}^2 \dd s + \frac{1}{\delta_2} \int_0^t |(I-P_\gamma)f|_{2,+}^2 \dd s .\notag  
\end{align}
The estimate to $b_2$ and $b_3$ are the same by modifying the test function \eqref{phi_b_k} to the following:
\begin{align*}
    &\psi_2 =  v_1v_2\sqrt{\mu}\p_{x_1}\phi_2 + \frac{3}{2}\Big(|v_2|^2 - \frac{|v|^2}{3} \Big)\sqrt{\mu} \p_{x_2} \phi_2  + v_2 v_3 \sqrt{\mu} \p_{x_3} \phi_2, \\
    &  -\p_{x_2}^2 \phi_2  -\Delta\phi_2 = b_2, \\
    &\psi_3 =  v_1v_3\sqrt{\mu}\p_{x_1}\phi_3 +  v_2 v_3 \sqrt{\mu} \p_{x_2} \phi_3 + \frac{3}{2}\Big(|v_3|^2 - \frac{|v|^2}{3} \Big)\sqrt{\mu} \p_{x_3} \phi_3   ,\\
    &  -\p_{x_3}^2 \phi_3  -\Delta\phi_3 = b_3.
\end{align*}
For $G_{b}(t):= \int_\O \int_{\mathbb{R}^3} (\psi_1+\psi_2+\psi_3) f(t)\dd x \dd v$, we conclude the estimate for $\mathbf{b}$ as follows: for some $C_2$ and any $\delta_2>0$,
\begin{align}
 \int_0^t \Vert \mathbf{b}\Vert_{L^2_x}^2 \dd s   &      \lesssim C_2\Big[  G_b(t)-G_b(0) + \delta_2 \int_0^t \Vert (a,c)\Vert^2_{L^2_x} \notag\\
   & + \frac{1}{\delta_2}\int_0^t \Vert g\Vert_{L^2_{x,v}} \dd s + \frac{1}{\delta_2} \int_0^t \Vert (\mathbf{I}-\mathbf{P})f\Vert_{L^2_{x,v}}^2 \dd s + \frac{1}{\delta_2}\int_0^t |(I-P_\gamma)f|_{2,+}^2 \dd s\Big] .  \label{b_estimate}
\end{align}

\textbf{Step 3: estimate of $a(t,x)$.}

We choose the test function as
\begin{align}
&\psi=\psi_a := \sum_{i=1}^3  \p_i \phi_a  v_i (|v|^2 - 10) \mu^{1/2} ,\notag\\
& -\Delta \phi_a = a, \ \text{ in }\O, \ \nabla_x \phi_a \cdot n = 0 \text{ on } \p\O. \label{phi_a}
\end{align}

From direction computation, on LHS of~\eqref{weak_formula_2}, $\mathbf{b}$ vanished from oddness and $c$ vanished from $\int_{\mathbb{R}^3}(|v|^2-10)v_i^2(|v|^2-3)\dd v =0$. Thus we obtain 
\begin{align}
  LHS  &  = 5 \int_0^t \Vert a\Vert_{L^2_x}^2\dd s - \sum_{i,j=1}^3 \int_0^t \int_{\O} \p_{ij}^2 \phi_a \langle v_iv_j (|v|^2-10)\mu^{1/2},(\mathbf{I}-\mathbf{P})f\rangle \dd x \dd s \notag\\
  & = 5 \int_0^t \Vert a\Vert_{L^2_x}^2\dd s + E_3, \label{LHS_a}
\end{align}
where, for any $\delta_3>0$,
\begin{align*}
  |E_3|  & \lesssim \delta_3 \int_0^t \Vert a\Vert_{L^2_x}^2\dd s  +  \frac{1}{\delta_3} \int_0^t \Vert \mu^{1/4}(\mathbf{I}-\P)f\Vert^2_{L^2_{x,v}} \dd s.
\end{align*}
For $J_1$ in~\eqref{weak_formula_2}, we denote
\begin{align}
    & -\Delta \Phi_a = \p_t a(s), \ \text{in } \O, \ \nabla_x \Phi_a \cdot n = 0 \text{ on } \p\O.  \notag
\end{align}
Integration by part leads to
\begin{align}
    &   \int_0^t \int_{\O} |\nabla_x \Phi_a|^2 \dd x \dd s   = \int_0^t \int_{\p\O} \p_t a \Phi_a  \dd x \dd s.   \label{energy_p_t_a}
\end{align}
From the conservation of mass $\p_t a + \nabla_x \cdot \mathbf{b}= 0,$ it holds
\begin{align}
    &    \int_0^t \int_{\O} \p_t a \Phi_a \dd x \dd s  = \int_0^t \int_{\O} \mathbf{b}\cdot \nabla_x \Phi_a \dd x \dd s  - \int_0^t \int_{\p\O} (\mathbf{b}\cdot n) \Phi_a \dd S_x \dd s. \label{energy_p_t_a_2}
\end{align}
The boundary term can be computed as
\begin{align*}
    & \int_0^t \int_{\p\O} (\mathbf{b}\cdot n) \Phi_a \dd S_x \dd s\\
    & =  \int_0^t  \Phi_a \Big[ \int_{\p\O}\int_{n(x)\cdot v>0} (n(x)\cdot v) \sqrt{\mu} (P_\gamma f + (I-P_\gamma)f)  \dd v \dd S_x + \int_{\p\O}\int_{n(x)\cdot v<0} (n(x)\cdot v) \sqrt{\mu} P_\gamma f \dd v \dd S_x\Big] \dd s\\
    & \lesssim o(1)\int_0^t |\Phi_a|_{L^2(\p\O)}^2 \dd s + \int_0^t |(I-P_\gamma)f|^2_{2,+} \dd s\lesssim o(1)\int_0^t \Vert \nabla_x \Phi_a\Vert_{L^2_x}^2 \dd s + \int_0^t |(I-P_\gamma)f|_{2,+}^2 \dd s \\
    & \lesssim o(1)\int_0^t \Vert \nabla_x \phi_a\Vert_{L^2_x}^2 \dd s + \int_0^t |(I-P_\gamma)f|_{2,+}^2 \dd s .
\end{align*}
In the third line, the contribution of $P_\gamma f$ vanished from the oddness, and we applied the trace theorem. In the last line, we applied the Poincaré inequality.

The other term in \eqref{energy_p_t_a_2} is controlled as
\begin{align*}
o(1) \int_0^t \Vert \nabla_x \Phi_a \Vert_{L^2_x}^2  \dd s +  \int_0^t \Vert \mathbf{b}\Vert_{L^2_x}^2  \dd s.
\end{align*}
Plugging the estimates to \eqref{energy_p_t_a}, we obtain
\begin{align}
    &   \int_0^t \Vert \nabla_x \Phi_a \Vert_{L^2_x}^2 \dd s \lesssim \int_0^t \Vert \mathbf{b}\Vert_{L^2_x}^2  \dd s + \int_0^t |(I-P_\gamma)f|_{2,+}^2 \dd s.   \label{Phi_a_estimate}
\end{align}

We apply \eqref{Phi_a_estimate} to compute $J_1$ as
\begin{align}
  |J_1|  &  \lesssim  \int_0^t \Vert \nabla_x \Phi_a \Vert^2_{L^2_x} \dd s +  \int_0^t \Vert \mathbf{b}\Vert_{L^2_x}^2 \dd s +  \int_0^t \Vert \mu^{1/4}(\mathbf{I}-\P)f\Vert^2_{L^2_{x,v}}  \dd s \notag\\
  & \lesssim  \int_0^t \Vert \mathbf{b}\Vert_{L^2_x}^2\dd s  +  \int_0^t \Vert \mu^{1/4}(\mathbf{I}-\P)f\Vert^2_{L^2_{x,v}}  \dd s + |(I-P_\gamma) f|^2_{2,+}. \label{J1_a_bdd}
\end{align}

Then we apply the boundary condition of $\phi_a$ and $f$ to compute $J_2$:
\begin{align*}
    &  \int_{\gamma} \psi f \dd \gamma = \int_{\gamma_+} \psi f \dd \gamma + \int_{\gamma_-} \psi f \dd \gamma.
\end{align*}
We compute that
\begin{align*}
    & \int_{\p\O} \Big[\int_{n(x)\cdot v>0} + \int_{n(x)\cdot v<0} \Big] (|v|^2-10) \mu^{1/2} (v\cdot \nabla_x \phi_a) (n\cdot v) f \dd v \dd S_x \notag \\
    &  = \int_{\p\O} \int_{n(x)\cdot v>0}   (|v|^2 - 10) \mu^{1/2} (v\cdot \nabla_x \phi_a) (n\cdot v) (f-P_\gamma f) \dd v \dd S_x  \notag\\
    & + 2\int_{\p\O} \int_{n(x)\cdot v>0} (|v|^2 - 10)\mu^{1/2} (n\cdot \nabla_x \phi_a) (n\cdot v)^2 P_\gamma f \dd v\dd S_x  \notag\\
    & = \int_{\p\O} \int_{n(x)\cdot v>0}   (|v|^2 - 10) \mu^{1/2} (v\cdot \nabla_x \phi_a) (n\cdot v) (f-P_\gamma f) \dd v \dd S_x  \notag\\
    & \lesssim \delta_3 \Vert \nabla_x \Phi_a\Vert_{L^2(\p\O)}^2 + \frac{1}{\delta_3} |(I-P_\gamma)f|^2_{2,+}\lesssim \delta_3 \Vert a\Vert_{L^2_x}^2 + \frac{1}{\delta_3} |(I-P_\gamma)f|^2_{2,+}. 
\end{align*}
In the first equality, we used the change of variable $v\to v-2(n(x)\cdot v)n(x)$. In the second equality, the third line vanishes due to the boundary condition of $\phi_a$ in \eqref{phi_a}. In the last inequality, we used the standard elliptic estimate of \eqref{phi_a} with the trace theorem: $\Vert \phi_a\Vert_{H^1(\p\O)}\lesssim \Vert \phi_a\Vert_{H^2_x}\lesssim \Vert a\Vert_{L^2_x}$.

We derive the estimate for $J_2$ as
\begin{equation}\label{J2_a_bdd}
|J_2| \lesssim \delta_3 \int_0^t \Vert a\Vert_{L^2_x}^2 \dd s + \frac{1}{\delta_3} \int_0^t |(I-P_\gamma )f|_{2,+}^2 \dd s.
\end{equation}

$J_3$ and $J_4$ are estimated similarly as \eqref{J3_c_bdd} and \eqref{J4_c_bdd}:
\begin{equation}\label{J3_a_bdd}
|J_3| \lesssim \delta_3 \int_0^t \Vert a\Vert_{L^2_x}^2 \dd s + \frac{1}{\delta_3} \int_0^t \Vert (\mathbf{I}-\mathbf{P})f \Vert^2_{L^2_{x,v}} \dd s,
\end{equation}
\begin{equation}\label{J4_a_bdd}
|J_4| \lesssim \delta_3 \int_0^t \Vert a\Vert_{L^2_x}^2 \dd s + \frac{1}{\delta_3} \int_0^t \Vert g\Vert_{L^2_{x,v}}^2 \dd s.
\end{equation}

Collecting \eqref{LHS_a}, \eqref{J1_a_bdd}, \eqref{J2_a_bdd}, \eqref{J3_a_bdd} and \eqref{J4_a_bdd}, we conclude the estimate $a$ as follows: for some $C_3>0$ and $G_a(t):= \int_\O \int_{\mathbb{R}^3} \psi_a f(t)\dd x \dd v$
\begin{align}
  \int_0^t \Vert a\Vert_{L^2_x}^2 \dd s  & \leq C_3 \Big[ G_a(t)-G_a(0) + \int_0^t \Vert \mathbf{b}\Vert_{L^2_x}^2 \dd s  + \frac{1}{\delta_3} \int_0^t \Vert (\mathbf{I}-\mathbf{P})f\Vert_{L^2_{x,v}}^2 \dd s \notag\\
  & +  \frac{1}{\delta_3} \int_0^t \Vert g\Vert_{L^2_{x,v}}^2 \dd s + \frac{1}{\delta_3}\int_0^t |(I-P_\gamma )f|_{2,+}^2 \dd s\Big]. \label{a_bdd}
\end{align}

\textbf{Step 4: conclusion}  

We summarize \eqref{a_bdd}, \eqref{b_estimate} and \eqref{c_bdd}. We let $\delta_2=\sqrt{\delta_1}$, and multiply \eqref{b_estimate} by $\delta_1^{3/4}$ to have
\begin{align}
 \delta_1^{3/4} \int_0^t \Vert \mathbf{b}\Vert_{L^2_x}^2 \dd s  & \leq  C_2 \delta_1^{5/4} \int_0^t \Vert a\Vert_{L^2_x}^2 \dd s + C_2 \delta_1^{1/4} \int_0^t \Vert c\Vert^2_{L^2_x} \dd s   + C_2 \delta_1^{3/4} \Big[ G_b(t) - G_b(0)  \notag\\
  & +  \frac{1}{\sqrt{\delta_1}} \int_0^t \Vert (\mathbf{I}-\mathbf{P})f\Vert_{L^2_{x,v}}^2 \dd s + \frac{1}{\sqrt{\delta_1}}\int_0^t \Vert g\Vert_{L^2_{x,v}}^2 \dd s + \frac{1}{\sqrt{\delta_1}} \int_0^t |(I-P_\gamma)f|^2_{2,+} \dd s  \Big]. \label{b_estimate_delta}
\end{align}

Then we evaluate $\delta_1 \times \eqref{a_bdd} + \eqref{b_estimate_delta} + \eqref{c_bdd}   $ as
\begin{align*}
    &  \delta_1 \int_0^t \Vert a\Vert^2_{L^2_x} \dd s  +  \delta_1^{3/4} \int_0^t \Vert \mathbf{b}\Vert_{L^2_x}^2 \dd s +  \int_0^t \Vert c\Vert_{L^2_x}^2 \dd s \\
    & \leq   (C_3 \delta_1 + C_1 \delta_1) \int_0^t \Vert \mathbf{b}\Vert_{L^2_x}^2 \dd s + C_2 \delta_1^{5/4} \int_0^t \Vert a\Vert_{L^2_x}^2 \dd s + C_2 \delta_1^{1/4} \int_0^t \Vert c\Vert_{L^2_x}^2 \dd s \\
    & + C \Big[G_a(t)+G_b(t)+G_c(t) - G_a(0)-G_b(0)-G_c(0)  \Big] \\
    & + C \Big[\int_0^t \Vert (\mathbf{I}-\mathbf{P})f\Vert_{L^2_{x,v}}^2 \dd s + \int_0^t \Vert g\Vert_{L^2_{x,v}}^2 \dd s +  \int_0^t |(I-P_\gamma)f|^2_{2,+} \dd s  \Big].
\end{align*}
Here the constant $C$ in the last two lines depends on $C_1,C_2,C_3,\delta_1$. We choose small enough $\delta_1$ such that 
\begin{align*}
    &C_3\delta_1+C_1\delta_1 < \delta_1^{3/4}, \ \ C_2 \delta_1^{5/4} < \delta_1, \ \ C_2 \delta_1^{1/4}<1.
\end{align*}
Finally, we conclude the lemma with $|G(t)| = |G_a(t) + G_b(t) + G_c(t)| = |\iint_{\O\times \mathbb{R}^3} (\psi_a + \psi_b +\psi_c) f(t) \dd x \dd v |  \lesssim \Vert f(t)\Vert_{L^2_{x,v}}^2$.

\hide
\section{Hydrodynamic limit}

To derive the incompressible fluid limit, we focus on the scaled equation
\begin{align*}
    &   \e \p_t F + v\cdot \nabla_x F = \frac{M(F)-F}{\e}, \\
    & F = \mu + \e \sqrt{\mu} f_1 + \e^2 \sqrt{\mu} f_2 + \e^{3/2} \sqrt{\mu} R.
\end{align*}
Then the BGK operator becomes
\begin{align*}
    & M(F) = \mu + \e \sqrt{\mu} (\mathbf{P}f_1 + \e \mathbf{P}f_2 + \e^{1/2}\mathbf{P}R) \\
    &+ \sum_{0\leq i,j\leq 4} \Big(\int_0^1 \{D^2_{(\rho_\vartheta,\rho_\vartheta U_\vartheta, G_\vartheta)}\}M(\vartheta)(1-\vartheta)\dd \vartheta \Big) \langle \e f_1 + \e^2 f_2 + \e^{3/2} R, \chi_i \rangle \langle \e f_1 + \e^2 f_2 + \e^{3/2} R, \chi_j \rangle .
\end{align*}

Collecting the order of $\e$, we obtain
\begin{align}
    &     \text{Order $O(1)$:} \ \mathbf{P}f_1 - f_1 = 0,  \label{order:1}\\
    &     \text{Order $O(\e)$:} \  v\cdot \nabla_x f_1 + f_2 - \mathbf{P}f_2 = \sum_{0\leq i,j\leq 4} \Big(\int_0^1 \{D^2_{(\rho_\vartheta,\rho_\vartheta U_\vartheta, G_\vartheta)}\}M(\vartheta)(1-\vartheta)\dd \vartheta \Big) \langle f_1,\chi_i\rangle \langle f_1,\chi_j\rangle ,  \label{order:e}\\
    & \text{Order $O(\e^2)$:} \ \p_t f_1 + v\cdot \nabla_x f_2  \notag\\
    & = \sum_{0\leq i,j\leq 4} \Big(\int_0^1 \{D^2_{(\rho_\vartheta,\rho_\vartheta U_\vartheta, G_\vartheta)}\}M(\vartheta)(1-\vartheta)\dd \vartheta \Big) (\langle f_1,\chi_i\rangle \langle f_2,\chi_j\rangle + \langle f_2,\chi_i\rangle \langle f_1,\chi_j\rangle ) .  \label{order:e2}
\end{align}
\eqref{order:1} implies that
\begin{align*}
    & f_1 = \Big(\rho_1 + u_1\cdot v + \theta_1 \frac{|v|^2-3}{2} \Big)\sqrt{\mu}.
\end{align*}

Moments of \eqref{order:e} leads to the incompressibility condition and Bousinesq relation
\begin{align*}
    &    \nabla_x \cdot u_1 = 0, \ \nabla_x(\rho_1 + \theta_1) = 0.
\end{align*}

\unhide

\ \\
\noindent{\bf Acknowledgements}
Marlies Pirner was funded by the Deutsche Forschungsgemeinschaft (DFG, German Research Foundation) under Germany’s Excellence Strategy EXC 2044-390685587,
Mathematics Muenster: Dynamics–Geometry–Structure, and the German Science Foundation DFG (grant no. PI 1501/2-1). \\

\noindent{\bf Conflict of Interest:} The authors declare that they have no conflict of interest.

\bibliographystyle{siam}
\bibliography{citation}

\end{document}